\numberwithin{equation}{section}
\newtheorem{theorem}{Theorem}[section]
\newtheorem{Conjection}{Conjection}[section]
\newtheorem{lemma}[theorem]{Lemma}
\newtheorem{remark}[theorem]{Remark}
\newtheorem{definition}[theorem]{Definition}
\newtheorem{proposition}[theorem]{Proposition}
\newtheorem{corollary}[theorem]{Corollary}
\begin{document}

	\title[4d energy supcritical INLS]
	{The defocusing energy-supercritical inhomogeneous NLS in four 
		space dimension }
	
	\author[X. Liu]{Xuan Liu}
	\address{School of Mathematics, Hangzhou Normal University, \ Hangzhou ,\ 311121, \ China} 
	\email{liuxuan95@hznu.edu.cn}
	
	\author[C. Xu]{Chengbin Xu}
	\address{School of Mathematics and Statistics, Qinghai Normal University, \ Xining, Qinghai,\  810008, \ China.} 
	\email{xcbsph@163.com}

	\begin{abstract}
		In this paper, we investigate the global well-posedness and scattering theory for the defocusing energy supcritical inhomogeneous nonlinear Schr\"odinger equation $iu_t + \Delta u =|x|^{-b} |u|^\alpha u$ in four space dimension, where $s_c := 2- \frac{2-b}{\alpha} \in (1, 2)$ and $0<b<\min \{ (s_c-1)^2+1,3-s_c\}$.
		We prove that if the solution has a prior bound in the critical Sobolev space, that is, $u \in L_t^\infty(I; \dot{H}_x^{s_c}(\mathbb{R}^4))$, then $u$ is global and scatters.   The proof of the main results is based on the concentration-compactness/rigidity framework developed by Kenig and Merle [Invent. Math. 166 (2006)], together with a long-time Strichartz estimate, a spatially localized Morawetz estimate, and a frequency-localized Morawetz estimate.

\vspace{0.3cm}

\noindent \textbf{Keywords:}  Energy supcritical, inhonogeneous NLS,  Global well-posedness, Scattering, Concentration-compactness. 
\end{abstract}
	
	\maketitle
	
	\medskip
	\section{Introduction}
	We consider the defocusing   inhomogeneous nonlinear Schrödinger equation (INLS) in $\mathbb{R}^4$:
	\begin{equation}
		\begin{cases}
			iu_t + \Delta u = |x|^{-b} |u|^{\alpha} u, \qquad (t,x) \in \mathbb{R} \times \mathbb{R}^4, \\
			u(0,x) = u_0(x),
		\end{cases}\label{INLS}
	\end{equation}
	where $\alpha > 0$, and $0 < b < 2$. 
	Note that the classical nonlinear Schr\"{o}dinger (NLS) equation corresponds to the case $b=0$. For the physical background and applications of the INLS model (\ref{INLS}) in nonlinear optics, see Gill \cite{Gill2000} and Liu-Tripathi \cite{Liu1994}.

For solutions $u$ that are sufficiently smooth and decay suitably at infinity, the energy 
	\begin{equation}
		E(u) = \int_{\mathbb{R}^4} \frac{1}{2} |\nabla u(t,x)|^2 + \frac{1}{\alpha + 2} |x|^{-b} |u(t,x)|^{\alpha + 2} \, dx \notag
	\end{equation}
	is conserved throughout the interval of existence. 
	The Cauchy problem \eqref{INLS} is scale-invariant. Specifically, the scaling transformation
	\[
	u(t,x) \longmapsto \lambda^{\frac{2-b}{\alpha}} u( \lambda^2 t,\lambda x), \quad \lambda > 0,
	\]
	leaves the class of solutions to \eqref{INLS} invariant. This transformation also identifies the critical space $\dot{H}^{s_c}(\mathbb{R}^4)$, where the critical regularity $s_c$ is defined as $s_c := 2 - \frac{2-b}{\alpha}$. 
	We classify \eqref{INLS} as mass-critical if $s_c = 0$, energy-critical if $s_c = 1$, inter-critical if $0 < s_c < 1$, and energy-supercritical if $s_c > 1$. 
 
The local well-posedness of \eqref{INLS} in $H^s(\mathbb{R} ^d)$, for $0 \le s \le \min\{1, \frac{d}{2}\}$, under suitable conditions on $b$ and $\alpha$, was first established by Guzman \cite{Guzman} using classical Strichartz estimates in Sobolev spaces. Subsequently, by employing techniques based on Lorentz spaces \cite{Aloui,AnKim2023ZAA} and weighted Strichartz estimates \cite{CCF,ChoHongLee,ChoLee,Lee2025arXiv}, the local theory in $H^s(\mathbb{R} ^d)$ was extended to the range $0 \le s < \min\{d, 1 + \frac{d}{2} \}$, under the condition $0 < \alpha$, $(d - 2s)\alpha \le 4 - 2b$, along with suitable assumptions on $b$.

In this paper, we focus on the global well-posedness and scattering theory for the Cauchy problem \eqref{INLS}. A wealth of results has been established in this direction: for the mass-critical case, see \cite{LiuMiaoZheng}; for the inter-critical case, see \cite{Campos,Dinh3,Dinh1,Dinh4,FG,FG2,MMZ,Murphy2022}; and for the energy-critical case, see \cite{ChoHongLee,ChoLee,GuzmanMurphy2021JDE,GX,LiuYangZhang}.
However, to the best of our knowledge, no results are currently available for the energy-supercritical regime. This work aims to address precisely that case.
 
 	\subsection{The nonlinear Schr\"odinger equation at critical regularity}

 To begin with, we review   global well-posedness and scattering theory  for the standard nonlinear Schrödinger equation (NLS) on  $ \mathbb{R} ^d:$
	\begin{equation}
		iu_t + \Delta u = \lambda |u|^\alpha u, \qquad (t,x) \in \mathbb{R} \times \mathbb{R}^d, \label{NLS0}
	\end{equation}
	which has seen significant progress in recent years.
	Due to conserved quantities at critical regularity, mass- and energy-critical cases have received the most attention. For the defocusing ($\lambda=+1$) energy-critical NLS, it is now known that initial data in $\dot{H}^1(\mathbb{R} ^d)$ leads to global solutions that scatter. This result was established first for radial data by Bourgain \cite{Bourgain1999}, Grillakis \cite{Grillakis2000}, Tao \cite{Tao2005}, and later for general data by Colliander et al. \cite{Colliander2008}, Ryckman-Visan \cite{RyckmanVisan2007}, and Visan \cite{Visan2007,Visan2012}. For the focusing ($\lambda=-1$) case, see \cite{Dodson2019ASENS,KenigMerle2006,KillipVisan2010}.
	
	For mass-critical NLS, it has been shown that arbitrary data in $L^2(\mathbb{R} ^d)$ leads to global solutions that scatter, first for radial data in $d \geq 2$ (see \cite{TaoVisanZhang2007,KillipTaoVisan2009,KillipVisanZhang2008}) and later for general data in all dimensions by Dodson \cite{Dodson2012,Dodson2015,Dodson2016a,Dodson2016b}.

	For cases where $s_c \neq 0, 1$, the energy- and mass-critical methods do not apply due to the absence of conserved quantities controlling the time growth of the $\dot{H}^{s_c}(\mathbb{R} ^d)$ norm of the solutions. However, it is conjectured that under \textit{a priori} control of a critical norm, global well-posedness and scattering hold for all $s_c > 0$ in any spatial dimension:
	\begin{Conjection}\label{CNLS0}
		Let $d \geq 1$, $\alpha \geq \frac{4}{d}$, and $s_c = \frac{d}{2} - \frac{2}{\alpha}$. Assume $u: I \times \mathbb{R}^d \to \mathbb{C}$ is a maximal-lifespan solution to \eqref{NLS0} such that 
		\[
		u \in L_t^\infty \dot{H}_x^{s_c}(I \times \mathbb{R}^d).
		\]
		Then $u$ is global and scatters as $t \to \pm \infty$.
	\end{Conjection}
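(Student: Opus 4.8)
The plan is to run the concentration--compactness/rigidity scheme of Kenig and Merle, in the form adapted to an \emph{a priori} critical-norm hypothesis (as in the energy-supercritical work of Kenig--Merle and of Killip--Visan). The mass-critical case $s_c=0$ is Dodson's theorem, so assume $s_c>0$; in particular Galilean boosts, which do not preserve $\dot H^{s_c}_x$, will play no role. For $A>0$ let $L(A)$ denote the supremum of the scattering norm $\|u\|_{L^{q}_{t,x}(I\times\mathbb{R}^d)}$ over all maximal-lifespan solutions $u$ of \eqref{NLS0} with $\|u\|_{L^\infty_t\dot H^{s_c}_x(I\times\mathbb{R}^d)}\le A$, where $q=q(d,\alpha)$ is an admissible exponent such that $\|u\|_{L^q_{t,x}}<\infty$ forces global existence and scattering. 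Small-data theory gives $L(A)<\infty$ for small $A$, and Conjecture~\ref{CNLS0} reduces to showing $L(A)<\infty$ for every $A$. If this fails, let $A_c<\infty$ be the critical value; a linear profile decomposition in $\dot H^{s_c}_x$ modulo the scaling and translation symmetries, combined with the stability/perturbation theory for \eqref{NLS0}, produces a nonzero \emph{critical element} $u_c$ --- a solution with $\|u_c\|_{L^\infty_t\dot H^{s_c}_x}=A_c$, infinite scattering norm on its maximal interval $I$, and orbit precompact modulo symmetries; equivalently there are $N\colon I\to(0,\infty)$ and $x\colon I\to\mathbb{R}^d$ with $\{N(t)^{-(d/2-s_c)}u_c(t,N(t)^{-1}x+x(t))\}_{t\in I}$ precompact in $\dot H^{s_c}_x$.

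One then reduces, after rescaling so that $N(t)\le1$ on $I$, to a short list of enemies according to the behaviour of $N(t)$: a self-similar-type case with $N(t)\to0$, a quasi-soliton case with $\inf_t N(t)>0$, and the frequency-cascade case ($\liminf N(t)=0<\limsup N(t)$) handled alongside. The technical core is to upgrade the regularity and decay of $u_c$ using only the a priori $\dot H^{s_c}_x$ bound. Here I would establish a long-time, frequency-localized Strichartz estimate --- controlling the Strichartz norms of $P_{\ge M}u_c$ on intervals over which a suitable integral of $N(t)$ is of unit size, with a quantitative gain as $M\to\infty$ --- and then run a bootstrap together with a double-Duhamel argument to conclude $u_c\in L^\infty_t\dot H^s_x$ for some $s>s_c$ (in the energy-supercritical range this places $u_c$ in the energy space $H^1_x$), and, in the bounded-$N(t)$ regime, a complementary negative-regularity estimate to obtain $u_c\in L^\infty_t\dot H^{-\varepsilon}_x$, hence $u_c\in L^2_x$. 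This long-time Strichartz estimate plays the role of the conservation law that is missing at regularity $s_c$.

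The endgame is a rigidity argument. In the self-similar case $N(t)\to0$, the extra regularity makes the energy of $u_c$ finite, and the concentration of $u_c$ at arbitrarily low frequencies together with conservation of energy forces $E(u_c)=0$, hence $u_c\equiv0$; the frequency-cascade case is excluded in the same spirit, now using $u_c\in L^2_x$ and conservation of mass to preclude concentration at arbitrarily high frequencies. In the quasi-soliton case one applies a spatially localized Morawetz (or interaction-Morawetz) inequality --- which for the defocusing nonlinearity carries a definite favourable sign --- to bound a positive quantity uniformly in time; comparing this with the lower bound that almost periodicity and $\inf_t N(t)>0$ provide for the same quantity forces $u_c\equiv0$. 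Each alternative contradicts $u_c\neq0$, so $L(A)<\infty$ for all $A$ and the conjecture follows.

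The step I expect to be the genuine obstacle is the coupling of the long-time Strichartz estimate with the (frequency-localized) Morawetz estimate: extracting the correct power of $M$ in the high-frequency Strichartz gain, and then, in the truncated Morawetz identity, controlling the commutator and boundary error terms --- generated by cutting the weight $|x|$ and by Littlewood--Paley projection --- so that the main positive term dominates. The self-similar and cascade scenarios, where $N(t)$ genuinely samples all of $(0,\infty)$, are the most resistant point. For the inhomogeneous equation \eqref{INLS} there is the additional difficulty that the weight $|x|^{-b}$ destroys translation invariance, which must be accommodated both in the profile decomposition (the spatial centres $x(t)$ no longer move freely, and profiles escaping to spatial infinity see a vanishing nonlinearity) and in the choice of Morawetz vector field.
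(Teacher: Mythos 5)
The statement you were asked to prove is Conjecture~\ref{CNLS0}, and the paper does not prove it --- it is stated precisely as an open conjecture, with Table~\ref{table1} recording which special cases are currently known and under what additional hypotheses (specific dimensions, specific ranges of $s_c$, radial data, $\alpha$ even in $d\ge8$, \emph{etc.}). What the paper actually proves is Theorem~\ref{T1}, a theorem about the inhomogeneous equation \eqref{INLS} in $d=4$ with $1<s_c<2$ and restrictions on $b$; Conjecture~\ref{CNLS0} for the homogeneous NLS~\eqref{NLS0} in full generality remains open. So there is no proof in the paper to compare against, and a ``proof proposal'' for the full conjecture necessarily falls short.

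Your outline is a faithful summary of the Kenig--Merle concentration-compactness/rigidity template and of the refinements introduced by Killip--Visan, Dodson, Murphy, Miao--Murphy--Zheng, and others, and you have correctly located the steps that are hard: (i) the long-time, frequency-localized Strichartz estimate with the right quantitative gain in $M$; (ii) closing the negative-regularity bootstrap when $N(t)$ is bounded; and (iii) controlling the commutator and low-frequency error terms in the frequency-truncated Morawetz identity. But these are not merely technical inconveniences --- they are exactly the points at which the known arguments break down outside the parameter ranges in Table~\ref{table1}. In particular, for general $s_c>1$ and general (non-radial) data the Morawetz-based rigidity step loses control of the spatial translation parameter $x(t)$: the weight $|x|$ (or $|x-x(t)|$) in the Morawetz vector field does not transform well under the translations that almost periodicity permits, and a Galilean normalization is unavailable since $s_c\neq0$. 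This is why the $d=4$, $\tfrac32<s_c<2$ case required radial data (Lu--Zheng) and why $d\ge8$ required $\alpha$ even. Your proposal neither resolves these obstructions nor proposes a new mechanism to do so, so what you have written is a roadmap, not a proof.

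One more point worth flagging, since it bears on the relationship between Conjecture~\ref{CNLS0} and the theorem the paper actually proves: the final paragraph of your proposal correctly observes that the weight $|x|^{-b}$ in the INLS \eqref{INLS} breaks translation invariance. The paper exploits exactly this: profiles escaping to spatial infinity see a vanishing nonlinearity (Proposition~\ref{Pscatteringsolution}), which forces the spatial and frequency centres of the critical element to stay bounded and thereby removes the radial hypothesis that is needed for the homogeneous NLS in the range $\tfrac32<s_c<2$. That is a genuine simplification available for \eqref{INLS} but not for \eqref{NLS0}, so it does not help with Conjecture~\ref{CNLS0} itself.
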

	
	The first progress on Conjecture \ref{CNLS0} was made by Kenig and Merle \cite{KenigMerle2010}, who addressed the case $d = 3$ and $s_c = \frac{1}{2}$ using their concentration-compactness framework \cite{KenigMerle2006} and a scaling-critical Lin-Strauss Morawetz inequality. Subsequently,  
	In the inter-critical regime ($0 < s_c < 1$), some significant progress toward Conjecture \ref{CNLS0} has been achieved by Murphy \cite{Murphy2014,Murphy2014b,Murphy2015}, Xie-Fang \cite{XieFang2013} ,Gao-Miao-Yang \cite{GaoMiaoYang2019}, Gao-Zhao \cite{GaoZhao2019} and Yu \cite{Yu2021}.
	
	For energy-supercritical cases ($s_c > 1$), Killip and Visan \cite{KillipVisan2010} were the first to prove Conjecture \ref{CNLS0} for $d \geq 5$ under certain conditions on $s_c$. Murphy \cite{Murphy2015} later addressed the conjecture for radial initial data in $d = 3$ with $s_c \in (1, \frac{3}{2})$. By introducing long-time Strichartz estimates in the energy-supercritical setting, Miao-Murphy-Zheng \cite{MiaoMurphyZheng2014} and Dodson-Miao-Murphy-Zheng \cite{Dodson2017} established Conjecture \ref{CNLS0} for general initial data in $d = 4$ with $1 < s_c \leq \frac{3}{2}$. For $d = 4$ and $\frac{3}{2} < s_c < 2$, similar results for radial data were obtained by Lu and Zheng \cite{LuZheng2017}.
	For related results in   dimensions  $d\ge5$, we refer the reader to Zhao \cite{Zhao2017AMS} and Li–Li \cite{LiLi2022SIAM}. See Table \ref{table1} for a summary.

	\begin{table}[h]
		\centering
		\caption{Results for Conjecture \ref{CNLS0} in the super-critical case: $1<s_c<\frac{d}{2}$}\label{table1}
		\begin{tabular}{|c|c|}
			\hline
			$d=3$ & $1<s_c<\frac{3}{2}$, \textcolor{blue}{radial}, Murphy \cite{Murphy2015}\\
			\hline 
			$d=4$ & \thead {  $1<s_c<\frac{3}{2}$, Miao-Murphy-Zheng\cite{MiaoMurphyZheng2014}; $s_c=\frac{3}{2}$, Dodson-Miao-Murphy-Zheng\cite{Dodson2017}; \\  $\frac{3}{2}<s_c<2$, \textcolor{blue}{radial},  Lu-Zheng\cite{LuZheng2017}}\\
			\hline 
			$d\ge5$  & \thead {$1<s_c<\frac{d}{2}$, and \textcolor{blue}{assume  $\alpha $ is even when  $d\ge8$}, \\
				Killip-Visan\cite{KillipVisan2010}, Zhao\cite{Zhao2017AMS}, Li-Li\cite{LiLi2022SIAM}}\\
			\hline
		\end{tabular}
	\end{table}

	\subsection{Main results}
	Analogous to Conjecture \ref{CNLS0}, it is conjectured that for the inhomogeneous NLS (\ref{INLS}):
	\begin{Conjection}\label{CNLS}
		Let $d \geq 1$, $0<b< 2$,  $\alpha \geq \frac{4-2b}{d}$,  and $s_c := \frac{d}{2} - \frac{2-b}{\alpha }$. Assume $u: I \times \mathbb{R}^d \rightarrow \mathbb{C}$ is a maximal-lifespan solution to (\ref{INLS}) such that 
		\begin{equation}
			u \in L_t^\infty \dot{H}_x^{s_c}(I \times \mathbb{R}^d), \notag
		\end{equation}
		then $u$ is global and scatters as $t \to \pm \infty$.
	\end{Conjection}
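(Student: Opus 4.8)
The plan is to establish the conjecture in the regime treated in this paper --- $d=4$, $s_c\in(1,2)$, and $0<b<\min\{(s_c-1)^2+1,\,3-s_c\}$ --- by running the concentration--compactness/rigidity machine of Kenig and Merle. The first ingredient is a solid local theory: local well-posedness in $\dot H^{s_c}(\mathbb R^4)$ together with a stability/perturbation theory robust enough for the inductive step, built on the weighted-Strichartz (or Lorentz-space) estimates recalled in the introduction. Throughout, the inhomogeneity is handled by decomposing $\mathbb R^4=\{|x|\le1\}\cup\{|x|\ge1\}$ and placing $|x|^{-b}$ in a weak-Lebesgue space on the singular region; the constraint $b<3-s_c$ is precisely what keeps the resulting exponents inside the Strichartz-admissible range and lets $\dot H^{s_c}$ embed into the Lebesgue spaces that occur. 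Assuming the conjecture fails in this regime, a linear profile decomposition for $e^{it\Delta}$ in $\dot H^{s_c}$ combined with the stability theory produces a minimal-norm counterexample $u$ that is almost periodic modulo the symmetries of \eqref{INLS}. Since $|x|^{-b}$ destroys translation and Galilean invariance, the only symmetries are scaling, phase, and time translation, so almost periodicity delivers a frequency-scale function $N(t)$ but no spatial-translation parameter --- with the single caveat that in the profile decomposition one must allow profiles whose spatial centres escape to infinity, since such profiles do not feel the potential and are governed by the translation-invariant NLS, and are therefore treated by the known NLS scattering theory. After the usual normalisations one is left with two enemies: a frequency cascade, where (having arranged $N(t)\le1$ on a half-line) $N(t_n)\to0$ along some $t_n\to\infty$, and a quasi-soliton, where $\inf_t N(t)>0$.

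The core new analytic tool is a long-time Strichartz estimate. For the almost periodic solution with $N(t)\le1$ on a compact interval $J$, one shows that the high-frequency portion $P_{>K}u$ is controlled in suitable spacetime norms by $\big(1+\int_J N(t)^{3-s_c}\,dt\big)^{1/2}$ times a constant that is small for $K$ large, plus lower-order tails. I would prove this by the standard bootstrap over a subdivision of $J$ into characteristic subintervals, using bilinear Strichartz estimates and local smoothing to iterate the gain, with $|x|^{-b}$ again split at $|x|=1$ and the singular piece absorbed by Hardy's inequality; this is where $b<(s_c-1)^2+1$ enters, ensuring the iteration loses no net derivatives. The payoff is an additional-regularity statement: feeding the long-time Strichartz bound back into the Duhamel formula shows that the high frequencies of $u$ are as smooth as one wishes and, in the quasi-soliton regime, yields a ``negative regularity'' gain placing $u$ in $L^\infty_t(\dot H^{1/2}_x\cap\dot H^1_x)$ uniformly --- exactly the regularity needed to run the Morawetz machinery below. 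A spatially localised Morawetz (Lin--Strauss/virial type) estimate is established alongside, both as a preliminary spacetime bound and to control the contributions localised near the origin that the frequency-localised estimate does not reach.

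The rigidity step then disposes of the two scenarios. The frequency cascade is excluded using the additional regularity: the ``negative regularity'' gain, together with $N(t_n)\to0$ and a Littlewood--Paley decomposition at frequency $\sim N(t_n)^{1/2}$, forces $\|u(t_n)\|_{\dot H^{s_c}_x}\to0$, which contradicts the lower bound $\inf_t\|u(t)\|_{\dot H^{s_c}_x}>0$ enjoyed by any nonzero almost periodic solution; alternatively, finiteness of the conserved energy (or mass) combined with $N(t_n)\to0$ produces the same contradiction. The quasi-soliton is excluded by a frequency-localised interaction (Lin--Strauss) Morawetz estimate: truncating the solution to frequencies $\lesssim1$ and exploiting the defocusing sign of $|x|^{-b}|u|^{\alpha+2}$, one obtains $\int_J\!\int_{\mathbb R^4}\frac{|x|^{-b}|u|^{\alpha+2}}{|x|}\,dx\,dt = o(|J|)$ as $|J|\to\infty$, the error terms (commutators, high-frequency remainders) being controlled precisely by the long-time Strichartz estimate and the added regularity; but almost periodicity with $\inf_t N(t)>0$ forces the same spacetime integral to be $\gtrsim|J|$, a contradiction for $|J|$ large. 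This rules out both enemies, so no counterexample exists and the conjecture holds in this regime.

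The step I expect to be the real obstacle --- and the origin of the hypothesis $0<b<\min\{(s_c-1)^2+1,\,3-s_c\}$ --- is closing the long-time Strichartz estimate and, downstream of it, the frequency-localised Morawetz estimate together with its error terms, in the presence of a nonlinearity $|x|^{-b}|u|^\alpha u$ that is simultaneously non-translation-invariant, singular at the origin, and non-smooth in $u$. One must control the behaviour near $x=0$ (where $|x|^{-b}$ blows up and $\dot H^{s_c}$ only barely embeds into the needed Lebesgue space), the slow decay for $|x|\gg1$, and the derivative loss coming from differentiating $|u|^\alpha$ --- all at once, and each of these pushes a parameter to its edge. The stated window on $b$ is exactly where all the constraints can be satisfied simultaneously; relaxing it, reaching $s_c=2$, or treating general dimension would require genuinely new input.
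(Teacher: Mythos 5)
Your overall architecture (Kenig--Merle induction, long-time Strichartz, spatially localized and frequency-localized Morawetz, cascade/soliton dichotomy) is the same as the paper's, but two steps as you describe them would fail. First, the escaping profiles: you claim that profiles whose spatial centres $x_n$ go to infinity ``are governed by the translation-invariant NLS, and are therefore treated by the known NLS scattering theory.'' This is both the wrong limiting model and an unavailable ingredient. Since $|x_n|/\lambda_n\to\infty$, the coefficient $|x|^{-b}$ tends to zero uniformly near the profile, so the correct limit is the \emph{free} Schr\"odinger evolution, not the homogeneous NLS: the paper's Proposition \ref{Pscatteringsolution} takes $v_n=e^{it\Delta}\phi(x-x_n)$ as the approximate solution and shows the entire nonlinearity is an acceptable error, then concludes by stability. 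Had the limit really been the translation-invariant energy-supercritical NLS, the argument would be circular: unconditional scattering for that equation is precisely Conjecture \ref{CNLS0}, and in $d=4$ with $\tfrac32<s_c<2$ it is only known for \emph{radial} data, which profiles are not. This point is not cosmetic --- the fact that far-out bubbles are asymptotically linear is exactly why the paper can drop the radial assumption in the range $\tfrac32<s_c<2$.

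Second, your rigidity step for the quasi-soliton relies on a ``negative regularity'' gain placing $u\in L_t^\infty(\dot H^{1/2}\cap\dot H^1)$ uniformly in that regime, and on truncating the Morawetz functional to frequencies $\lesssim 1$. Both moves are problematic: the downward-regularity arguments (Propositions \ref{P1}--\ref{P2}) require $K=\int N(t)^{3-2s_c}\,dt<\infty$, i.e. they only apply in the rapid-cascade scenario, which is exactly false for the quasi-soliton; and the low-frequency part of $u$ is not controlled at all by the a priori $\dot H^{s_c}$ bound. The paper instead truncates to \emph{high} frequencies $u_{>N}$ with $N$ small, where Bernstein plus compactness give $\||\nabla|^{1/2}u_{>N}\|_{L_t^\infty L_x^2}\lesssim \eta N^{\frac12-s_c}$ for free (Lemma \ref{LControl of Low and High Frequencies}), and controls the commutator/low-high error terms by the long-time Strichartz estimate; the contradiction is then $K_I\lesssim_u\eta(N^{1-2s_c}+K_I)$ against $K_{[0,T_{\max})}=\infty$, not an ``$o(|J|)$ versus $\gtrsim|J|$'' comparison (for $s_c\ge\tfrac32$ one only has $K_I\le|I|$). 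Relatedly, your dichotomy is mis-normalized: after rescaling the paper arranges $N(t)\ge1$, the cascade is characterized by $K<\infty$ and has $N(t)\to\infty$ as $t\to T_{\max}$ (not $N(t_n)\to0$ under an $N(t)\le1$ normalization), and the relevant weight is $N(t)^{3-2s_c}$, not $N(t)^{3-s_c}$. Finally, note the paper splits the range: for $1<s_c<\tfrac32$ the spatially localized Morawetz alone suffices, and the long-time Strichartz machinery is used only for $\tfrac32\le s_c<2$.
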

 
	  Wang and the second author of this paper \cite{WangXu2023JMAA}  first addressed Conjecture \ref{CNLS} in the subcritical case ($d \geq 5, s_c = \frac{1}{2}$) by employing the Lin–Strauss Morawetz inequality. In this work, we establish Conjecture \ref{CNLS} in the energy-supercritical regime in  four-dimensional case. Our main result is stated as follows.
	\begin{theorem}\label{T1}
		Let $1<s_c <2$, $0<b<\min \left\{ (s_c-1)^2+1,3-s_c \right\}$ and   $\alpha >0$ be such that  $s_c=2-\frac{2-b}{\alpha }$.  Suppose $u: I \times \mathbb{R}^4 \to \mathbb{C}$ is a maximal-lifespan solution to \eqref{INLS} such that 
		\begin{equation}
			u \in L_t^\infty \dot{H}_x^{s_c}(I \times \mathbb{R}^4). \label{Ebound}
		\end{equation}
		Then $u$ is global and scatters. 
	\end{theorem}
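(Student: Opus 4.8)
The plan is to run the Kenig--Merle concentration-compactness/rigidity scheme, adapted to the inhomogeneous nonlinearity and to critical regularity $s_c>1$. The first ingredient is a robust local theory for \eqref{INLS} at the scaling-critical level: local well-posedness in $\dot H^{s_c}(\mathbb R^4)$, a stability/perturbation lemma, and a scattering criterion, all phrased in Strichartz spaces adapted to the free propagator $e^{it\Delta}$. The technical heart here is a family of nonlinear estimates bounding $|x|^{-b}|u|^\alpha u$ at $s_c$ derivatives; since $s_c>1$ one must carry fractional derivatives through the singular weight, and this is exactly where the hypothesis $0<b<\min\{(s_c-1)^2+1,\,3-s_c\}$ enters — it is the range in which a fractional chain/Leibniz rule combined with Hardy's inequality closes without losing too much at the origin or at spatial infinity. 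Granting this, the standard Keraani-type reduction applies: if Theorem~\ref{T1} fails, there is a critical threshold $A_c<\infty$ so that solutions with $\|u\|_{L_t^\infty\dot H_x^{s_c}}<A_c$ are global and scatter, but $A_c$ is not achieved by a scattering solution; via a linear profile decomposition for $e^{it\Delta}$ in $\dot H^{s_c}$ together with the stability theory one then extracts a minimal blow-up solution $u_c$ whose orbit $\{\lambda(t)^{-(2-b)/\alpha}\,u_c(t,\lambda(t)^{-1}(x-x(t)))\}$ is precompact in $\dot H^{s_c}$, i.e. $u_c$ is almost periodic modulo the scaling (and residual translation) symmetry. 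One must be careful that \eqref{INLS} is not translation-invariant: bubbles in the profile decomposition whose spatial centers run off to infinity have to be shown to decouple into solutions of the translation-invariant limiting equation (the $b=0$ problem, which in this regime is governed by the free flow in the relevant limit and hence scatters), so that in the end $x(t)$ may be taken bounded.

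The second stage is to classify the almost periodic $u_c$ according to the behavior of its frequency scale $N(t):=\lambda(t)^{-1}$, reducing, after rescaling and time translation, to a short list of scenarios — typically a soliton-like solution with $N(t)\equiv 1$ and, should it occur, a frequency-cascade- or self-similar-type solution with $N(t)\to 0$ (or $\to\infty$) on a half-line. To preclude these I would deploy exactly the toolkit advertised in the abstract. A long-time Strichartz estimate upgrades the qualitative $\dot H^{s_c}$ bound into quantitative control of the high-frequency part of $u_c$ on long intervals and pins the solution's spectral mass near frequencies $\sim N(t)$, which in particular forces $N(t)$ not to oscillate too wildly and rules out the cascade scenario once the Morawetz bounds are in place. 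A spatially localized Lin--Strauss-type Morawetz inequality, centered at the origin — natural here because the weight $|x|^{-b}$ singles out $x=0$ — yields a scaling-critical spacetime bound on the bulk of the solution. Finally a frequency-localized Morawetz inequality absorbs the low-frequency tail that the spatially localized estimate cannot see on its own. Feeding almost periodicity back into these bounds forces the compactness quantity (morally $\int_I N(t)^{2-2s_c}\,dt$, suitably interpreted) to be simultaneously finite and infinite in the cascade case, and forces $\|u_c\|_{L_t^\infty\dot H_x^{s_c}}=0$ in the soliton case; either way one contradicts that $u_c$ is a nonzero blow-up solution. Hence no minimal counterexample exists and Theorem~\ref{T1} follows.

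The main obstacle, in my estimation, is twofold, and both difficulties trace back to the weight $|x|^{-b}$. Analytically, every nonlinear estimate — in the local theory, in the stability lemma, and most delicately in the error terms generated by the frequency-localized Morawetz identity — must simultaneously absorb the singularity at the origin and carry $s_c>1$ derivatives; the Morawetz error terms involve $\nabla(|x|^{-b})\sim|x|^{-b-1}$, and controlling these against the a priori bound is the tightest point of the argument, which is precisely why the constraint is sharpened to $b<(s_c-1)^2+1$. Structurally, the loss of translation invariance bars the use of the interaction (bilinear) Morawetz inequality that powers the homogeneous four-dimensional arguments of Miao--Murphy--Zheng and Dodson--Miao--Murphy--Zheng; one is pushed onto the weaker one-particle Morawetz estimates, and engineering a frequency-localized version of them that is strong enough to drive the rigidity step across the entire range $1<s_c<2$ — together with the interplay between the long-time Strichartz scale and the Morawetz scale — is the crux of the proof.
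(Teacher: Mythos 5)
Your proposal follows the same overall Kenig--Merle concentration-compactness/rigidity route as the paper: a local theory and stability lemma in $\dot H^{s_c}$ (with the constraints on $b$ entering exactly where you say, in closing the fractional Leibniz/chain rule estimates for $|x|^{-b}|u|^\alpha u$), reduction to a minimal almost-periodic counterexample, the crucial observation that the spatial decay of $|x|^{-b}$ kills profiles escaping to spatial infinity so that the translation parameter may be taken to vanish (this is the paper's Proposition~\ref{Pscatteringsolution}), and rigidity driven by a long-time Strichartz estimate, a spatially-localized Morawetz inequality, and a frequency-localized Morawetz inequality. Your remark that interaction/bilinear Morawetz is unavailable because of the broken translation invariance, forcing one onto one-particle Lin--Strauss estimates, is precisely the structural reason for the paper's tool choices.

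There are, however, two places where your rigidity picture does not match what the paper actually does, and the second is a genuine gap. First, the paper splits by $s_c$: for $1<s_c<3/2$ a single Bourgain-style space-truncated Morawetz combined with the mass-concentration bound $N(t)^{2s_c}\int_{|x|\le C/N(t)}|u|^2\,dx\gtrsim 1$ already rules out all almost-periodic solutions (since $N(t)\ge1$ and $3-2s_c>0$), with no need for long-time Strichartz or frequency localization; only for $3/2\le s_c<2$ does one introduce the finer dichotomy, based on whether $K=\int_0^{T_{\max}}N(t)^{3-2s_c}\,dt$ (note the exponent is $3-2s_c$ in $d=4$, not $2-2s_c$) is finite or infinite. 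Second, and more importantly, you claim the cascade scenario is excluded ``once the Morawetz bounds are in place,'' whereas in the paper Morawetz plays no role there. Rapid frequency cascade is ruled out by a completely different mechanism: the long-time Strichartz estimate is bootstrapped via the No-waste Duhamel formula (Lemma~\ref{LNoWasteDuhamel}) into a negative-regularity statement $u\in L_t^\infty\dot H_x^{-\varepsilon}$, which combined with the compactness property and $N(t)\to\infty$ forces $M[u_0]=0$, hence $u\equiv 0$, a contradiction. Without the No-waste Duhamel step and the negative-regularity propagation argument your sketch has no way to close the cascade case: the frequency-localized Morawetz gives a uniform bound $K_I\lesssim N^{1-2s_c}$, which contradicts divergence of $K$ in the \emph{soliton} case, but says nothing when $K<\infty$.
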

\begin{remark}
 To avoid technical complications, in Theorem \ref{T1}, we assume the condition $b < (s_c - 1)^2 + 1$, which is equivalent to $s_c < \alpha$. Under this assumption, the proof of the stability theorem becomes fairly straightforward (see Lemma \ref{Lemmanonlinearestimate} below), since the derivative of the nonlinearity possesses enough  regularity when applying the operator $|\nabla|^{s_c}$.  When $b \ge (s_c - 1)^2 + 1$, the stability theory becomes considerably more delicate, as this corresponds to the regime of low-power nonlinearities. In such cases, one needs to refine the function spaces used in the stability framework. We refer the reader to \cite[Section 3.4]{KillipVisan2010} for further discussions and relevant references.
\end{remark}
\begin{remark}
The constraint $b < 3 - s_c$ comes  from the Strichartz estimate (Proposition~\ref{P:SZ}) and  the singularity of the inhomogeneous coefficient $|x|^{-b}$.
 Using the weighted Strichartz estimates as in \cite{Lee2025arXiv}, it may be possible to extend the admissible range of $b$ in Theorem \ref{T1} to include the case $b \ge 3 - s_c$.
	\end{remark}
	
	Similar results for the classical nonlinear Schr\"{o}dinger equation in dimension four have been established previously by Miao-Murphy-Zheng \cite{MiaoMurphyZheng2014}, Dodson-Miao-Murphy-Zheng \cite{Dodson2017} and Lu-Zheng \cite{LuZheng2017}. Notably, in the range $\frac{3}{2}<s_c<2$, their results require the initial data to satisfy a radial symmetry assumption (c.f. Table \ref{table1}). In this paper, we show that this radial symmetry condition can be removed for the INLS equation. The key reason for this improvement is the presence of the spatially decaying factor $|x|^{-b}$ in the nonlinearity. This decay at infinity prevents the minimal almost periodic solution from concentrating at infinity in either physical or frequency space (see Proposition \ref{Pscatteringsolution} below).  Consequently, both spatial and frequency translation parameters must remain bounded. Thus, after applying a suitable translation, these parameters can be taken to be zero. This effectively places us in the same scenario as in the radial case but without the need for a radial assumption.  
	
	\subsection{Outline of the proof of Theorem \ref{T1}}
	We need some  definitions:
	\begin{definition}[Strong solution]
		Let
		\begin{equation}
			\gamma = 2(\alpha +1), \qquad m =  \frac{4\alpha (\alpha +1)}{\alpha +2-b(\alpha +1)}.\notag
		\end{equation}
		A function $u: I \times \mathbb{R}^4 \to \mathbb{C}$ is a solution to \eqref{INLS} if, for any compact $J \subset I$, $u \in C_t^0 L_x^2(J \times \mathbb{R}^4) \cap L_t^\gamma L_x^{m,2}(J \times \mathbb{R}^4)$, and satisfies the Duhamel formula for all $t, t_0 \in I$:
		\begin{equation}
			u(t) = e^{i(t - t_0)\Delta} u(t_0) - i \int_{t_0}^t e^{i(t-\tau)\Delta} |x|^{-b} |u(\tau)|^\alpha  u(\tau) \, d\tau, \notag
		\end{equation}
		where $L_x^{m,2}$ is the Lorentz space (defined in Subsection \ref{S:2.2}). We refer to $I$ as the \textit{lifespan} of $u$. If $u$ cannot be extended to any strictly larger interval, it is called a \textit{maximal-lifespan solution}. If $I = \mathbb{R}$, $u$ is called a \textit{global solution}.
	\end{definition}

	It is natural to assume that solutions belong locally in time to the space $L_t^\gamma L_x^{m,2}(I\times \mathbb{R}^4)$, as the linear flow always lies in this space by the Strichartz estimate (see proposition \ref{P:SZ} below). 
	Furthermore, any global solution $u$ of (\ref{INLS}) satisfying $\|u\|_{L_t^\gamma L_x^{m,2}(\mathbb{R}\times \mathbb{R}^4)}<+\infty$ necessarily scatters in both time directions 
	in the sense that there exist unique scattering states  $u_\pm\in \dot H^{s_c}(\mathbb{R} ^4)$ such that 
	\begin{equation}
		\|u(t)-e ^{it\Delta }u_\pm\|_{\dot H_x^{s_c}(\mathbb{R} ^4)}\rightarrow0\quad\text{as}\quad t\rightarrow\pm \infty. \notag
	\end{equation}
	In view of this, we introduce the scattering size of the solution $u$ as
	\begin{equation}
		S_I(u)= \|u\|_{L_t^\gamma L_x^{m,2}(I\times \mathbb{R}^4)}.\notag
	\end{equation}
	Closely associated with the notion of scattering is the notion of blow-up:
	\begin{definition}[Blow-up]\label{Blowup}
		A solution $u$ to \eqref{INLS} is said to blow up forward in time if there exists $t_0 \in I$ such that
		\begin{equation}
			\|u\|_{ S_{[t_0, \sup(I)) }} = \infty.\notag
		\end{equation}
		Similarly, $u$ blows up backward in time if there exists $t_0 \in I$ such that
		\begin{equation}
			\|u\|_{ S_{(\inf(I), t_0]  }} = \infty.\notag
		\end{equation}
	\end{definition}
	 
	 The local theory for (\ref{INLS}) has been worked out in \cite{AnKim2023ZAA}. 
	\begin{theorem}[Local well-posedness]\label{TLocalwellposedness}
		Let $1 < s_c < 2,0<b<2$ and  $\alpha >0$ be such that  $s_c=2-\frac{2-b}{\alpha }$.  Given $u_0 \in \dot{H}^{s_c}(\mathbb{R}^4)$ and $t_0 \in \mathbb{R}$, there exists a unique maximal-lifespan solution $u : I \times \mathbb{R}^4 \to \mathbb{C}$ to (\ref{INLS}) with $u(t_0) = u_0$. Moreover, this solution satisfies the following:
		\begin{enumerate}
			\item \textbf{(Local existence)} $I$ is an open neighborhood of $t_0$.
			\item \textbf{(Blowup criterion)} If $\sup I$ is finite, then $u$ blows up forward in time (in the sense of Definition \ref{Blowup}). Similarly, if $\inf I$ is finite, then $u$ blows up backward in time.
			\item \textbf{(Scattering)} If $\sup I = +\infty$ and $u$ does not blow up forward in time, then $u$ scatters forward in time. Specifically, there exists a unique $u_+ \in \dot{H}^{s_c}_x(\mathbb{R}^4)$ such that
			\[
			\lim_{t \to \infty} \|u(t) - e^{it\Delta}u_+\|_{\dot{H}^{s_c}_x(\mathbb{R}^4)} = 0. \label{1.4}
			\]
			Conversely, given $u_+ \in \dot{H}^{s_c}_x(\mathbb{R}^4)$, there is a unique solution to (\ref{INLS}) in a neighborhood of $t = \infty$ such that \eqref{1.4} holds. Analogous statements hold backward in time.
			\item \textbf{(Small-data global existence and scattering)} If $\|u_0\|_{\dot{H}^{s_c}_x}$ is sufficiently small, then $u$ is global and scatters.
		\end{enumerate}
	\end{theorem}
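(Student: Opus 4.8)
The plan is to run the usual contraction-mapping scheme in the Lorentz-based Strichartz spaces underlying Proposition~\ref{P:SZ}. Write $F(u)=|x|^{-b}|u|^{\alpha}u$,
\[
\Phi_{u_0}(u)(t)=e^{i(t-t_0)\Delta}u_0-i\int_{t_0}^{t}e^{i(t-\tau)\Delta}F(u(\tau))\,d\tau ,
\]
and work in the solution space $X(I)=L_t^{\gamma}L_x^{m,2}(I)\cap\{u:|\nabla|^{s_c}u\in S(I)\}$, where $S(I)$ is the (Lorentz-refined) Strichartz space of Proposition~\ref{P:SZ}. The analytic heart of the matter is the nonlinear estimate of Lemma~\ref{Lemmanonlinearestimate}, which in a suitable dual-Strichartz norm $N(I)$ should take the schematic form
\[
\big\| |\nabla|^{s_c}F(u)\big\|_{N(I)}\lesssim\|u\|_{L_t^{\gamma}L_x^{m,2}(I)}^{\alpha}\,\big\| |\nabla|^{s_c}u\big\|_{S(I)},
\]
together with its companion Lipschitz bound. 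This is where I expect the real work to lie, and where the hypotheses on $b$ enter: one places $|x|^{-b}\in L_x^{4/b,\infty}(\mathbb{R}^4)$, uses H\"older in Lorentz spaces to peel the weight off, and distributes $|\nabla|^{s_c}$ by a fractional Leibniz rule---either onto the weight, where $|\nabla|^{s_c-1}|x|^{-b}$ behaves like $|x|^{-(b+s_c-1)}$ and forces an admissibility constraint equivalent to $b<3-s_c$, or onto $|u|^{\alpha}u$ via $|\nabla|^{s_c}=|\nabla|^{s_c-1}\nabla$ and a fractional chain rule, which requires the derivative of the nonlinearity to be sufficiently regular---guaranteed by $s_c<\alpha$, i.e. $b<(s_c-1)^2+1$ (see the Remark following Theorem~\ref{T1}).

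Granting that estimate, I would first settle part~(4). By Proposition~\ref{P:SZ}, $\|e^{it\Delta}u_0\|_{L_t^{\gamma}L_x^{m,2}(\mathbb{R})}+\| |\nabla|^{s_c}e^{it\Delta}u_0\|_{S(\mathbb{R})}\lesssim\|u_0\|_{\dot H^{s_c}}$, so for $\|u_0\|_{\dot H^{s_c}}$ small $\Phi_{u_0}$ is a contraction on a small ball of $X(\mathbb{R})$, producing a global solution whose scattering then follows from convergence of the Duhamel tail in $\dot H^{s_c}$. For arbitrary data (part~(1)), the linear flow still has finite $L_t^{\gamma}L_x^{m,2}(\mathbb{R})$-norm, so one may choose $I\ni t_0$ with $\|e^{it\Delta}u_0\|_{L_t^{\gamma}L_x^{m,2}(I)}\le\eta$; although $\| |\nabla|^{s_c}e^{it\Delta}u_0\|_{S(I)}$ need not be small, the factor $\|u\|_{L_t^{\gamma}L_x^{m,2}}^{\alpha}$ in the nonlinear estimate still lets $\Phi_{u_0}$ contract on $\{\,\|u\|_{L_t^{\gamma}L_x^{m,2}(I)}\le2\eta,\ \| |\nabla|^{s_c}u\|_{S(I)}\le2C\|u_0\|_{\dot H^{s_c}}\,\}$, yielding a local solution; gluing these and taking the union of all such intervals of existence gives the unique maximal-lifespan solution on an open $I\ni t_0$. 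Uniqueness within the solution class $C_t^0L_x^2\cap L_t^{\gamma}L_x^{m,2}$ follows from the contraction's uniqueness on small subintervals together with a connectedness argument on the common lifespan.

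For parts~(2) and (3): for the blowup criterion I argue by contradiction, assuming $\sup I=T<\infty$ while $\|u\|_{S([t_0,T))}<\infty$; partitioning $[t_0,T)$ into finitely many subintervals of small scattering size and iterating the nonlinear estimate gives $\| |\nabla|^{s_c}u\|_{S([t_0,T))}<\infty$, whence the Duhamel formula shows $u(t)$ converges in $\dot H^{s_c}$ as $t\uparrow T$ and one can restart the local theory at time $T$, contradicting the maximality of $I$; the backward statement is identical. For scattering (part~(3) and the scattering half of~(4)): when $\sup I=\infty$ and $\|u\|_{S([t_0,\infty))}<\infty$, the same bootstrap gives $\| |\nabla|^{s_c}u\|_{S([t_0,\infty))}<\infty$, hence $\int_{t_0}^{\infty}e^{-i\tau\Delta}F(u(\tau))\,d\tau$ converges in $\dot H^{s_c}$ (apply Proposition~\ref{P:SZ} on tails near $+\infty$), which defines $u_+$ with $\|u(t)-e^{it\Delta}u_+\|_{\dot H^{s_c}}\to0$; conversely, given $u_+$, the map $u\mapsto e^{it\Delta}u_+-i\int_{\infty}^{t}e^{i(t-\tau)\Delta}F(u)\,d\tau$ contracts near $t=\infty$ since the linear tail is small there, which supplies the wave-operator half of~(3). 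The main obstacle throughout is the single nonlinear estimate above---reconciling the non-integrable singularity of $|x|^{-b}$ at the origin with the limited smoothness of the low-power nonlinearity $|u|^{\alpha}u$ while staying within the four-dimensional Strichartz range---and this is precisely what the condition $0<b<\min\{(s_c-1)^2+1,\,3-s_c\}$ is there to accommodate; once it is in hand, everything else is a routine consequence of Strichartz and Duhamel.
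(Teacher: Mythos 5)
There is a genuine gap, and it is one of hypotheses rather than of scheme. The paper does not prove Theorem \ref{TLocalwellposedness} at all: it is quoted from \cite{AnKim2023ZAA}, and it is stated for the \emph{full} range $1<s_c<2$, $0<b<2$. Your contraction argument, by your own account, hinges on the nonlinear estimate of Lemma \ref{Lemmanonlinearestimate}, and you explicitly invoke both constraints that make that lemma work: $b<3-s_c$ (so that the H\"older/Hardy exponents such as $\tfrac{1}{l_1}=\tfrac{3-b-s_c}{4}$ are admissible in the four-dimensional Lorentz--Strichartz framework after peeling off $|x|^{-b}$) and $b<(s_c-1)^2+1$, i.e.\ $s_c<\alpha$ (so that $|\nabla|^{s_c-1}$ can be applied to $F'(u)$ via the chain rule or Lemma \ref{LVisan}). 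These conditions are assumptions of Theorem \ref{T1} and of the stability theory, but they are \emph{not} assumptions of Theorem \ref{TLocalwellposedness}. Consequently your proof establishes local well-posedness, the blow-up criterion, scattering and small-data theory only on a strict subrange of the stated parameters; for example it says nothing when $s_c$ is close to $2$ and $b\ge 3-s_c$, or when $\alpha\le s_c$, whereas the theorem as stated covers these cases. Closing this gap requires genuinely different nonlinear estimates (the Lorentz-space or weighted-Strichartz techniques of \cite{AnKim2023ZAA,Lee2025arXiv} alluded to in the introduction and in the remark after Theorem \ref{T1}), not just the machinery already in the paper.

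Two smaller points within the range where your argument does apply. First, the large-data local theory needs the nonlinear estimate in a form with $\alpha$ factors carried by the undifferentiated scattering norm $\|u\|_{L_t^{\gamma}L_x^{m,2}}$ (or at least a positive power of it), since only the scattering size of the free evolution can be made small on a short interval; Lemma \ref{Lemmanonlinearestimate} as stated bounds everything by $\||\nabla|^{s_c}u\|_{L_t^{\gamma}L_x^{\rho,2}}$-norms, so you would need to rework its proof to extract the separated form you assert schematically (the chain-rule terms only give a fractional power of the scattering norm, which still suffices, but this should be said). Second, granting such an estimate, the rest of your outline (small-data global theory, gluing to a maximal interval, blow-up criterion by subdividing into intervals of small scattering size, construction of $u_{+}$ and of wave operators from the Duhamel tail) is the standard critical local theory and is fine.
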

 
	  To prove  Theorem \ref{T1}, following the road map in \cite{KenigMerle2006}, we argue by contradiction. First, define $L: [0,\infty) \to [0,\infty]$ by
	\[
	L(E) := \sup\{S_I(u) \mid u: I \times \mathbb{R} \to \mathbb{C} \text{ solving } (\ref{INLS}) \text{ with } \|u\|^2_{L_t^\infty \dot{H}_x^{s_c}(I \times \mathbb{R}^4)} \leq E \}.
	\]
From the stability theory (see Theorem \ref{TStability} below), we know that  $L$ is continuous. 	Note also that $L(E)$ is a nonnegative, nondecreasing   function, and by Theorem \ref{TLocalwellposedness} $L(E) \leq E^\alpha $ for $E$ sufficiently small. Then there must exist a unique critical threshold $E_c \in (0,\infty]$ such that
	\[
	L(E) < \infty \quad \text{if } E < E_c, \qquad
	L(E) = \infty \quad \text{if } E \ge E_c.
	\]
	The failure of main theorem implies that $0 < E_c < \infty$. Moreover, 
	  \begin{theorem}[existence of minimal counterexamples]\label{TReduction}
		Suppose  Theorem \ref{T1} fails to be true. 
		Then, there exists a maximal life-span solution   $u:[0,T_{\text{max}})\times \mathbb{R} ^4\rightarrow \mathbb{C}$ such that the following hold:
\begin{itemize}
	\item[(1)] $u$ blows up its scattering norm forward in time, i.e. $\mathrm{S}_{[0,T_{\max})}(u) = +\infty$;
	
	\item[(2)] there exists a frequency scale function $N : [0, T_{\max}) \to (0, \infty)$ such that
\begin{equation}
		\left\{ N(t)^{s_c-2} u(t, N(t)^{-1} x) : t \in [0, T_{\max}) \right\}\label{E592}
\end{equation}
	is precompact in $\dot{H}^{s_c}(\mathbb{R}^4)$.
\end{itemize}
In addition, we may assume   $\displaystyle \inf _{t\in [0,T_{\text{max}})}N(t)\ge1$. 
	\end{theorem}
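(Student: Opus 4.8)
The plan is to prove Theorem~\ref{TReduction} via the standard concentration-compactness reduction of Kenig--Merle, adapted to the inhomogeneous setting. Assume Theorem~\ref{T1} fails, so that $0<E_c<\infty$. Then there is a sequence of solutions $u_n:I_n\times\mathbb{R}^4\to\mathbb{C}$ with $\|u_n\|_{L_t^\infty\dot H_x^{s_c}}^2\to E_c$ and $S_{I_n}(u_n)\to\infty$. By time-translating we may split each $I_n$ at a point where the scattering norm is infinite in both time directions. The first main step is a \emph{linear profile decomposition} in $\dot H^{s_c}(\mathbb{R}^4)$ adapted to the Schr\"odinger group: writing the data (or rather the solution at a well-chosen time) as a superposition $\sum_{j=1}^J e^{it_n^j\Delta}\phi^j_{[\lambda_n^j,\xi_n^j,x_n^j]}+$ (small remainder), where the bracket denotes the action of scaling, frequency- and space-translation symmetries, with asymptotic orthogonality of the parameters. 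One then feeds this into the nonlinear flow using the stability theory (Theorem~\ref{TStability}); a standard argument shows that at most one profile can survive, and it must essentially saturate $E_c$, while all other profiles and the remainder must vanish. This produces a \emph{minimal blow-up solution} $u_c$ with $\|u_c\|_{L_t^\infty\dot H_x^{s_c}}^2=E_c$ whose orbit, modulo the symmetry group, is precompact in $\dot H^{s_c}$.

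The second step is to extract from this precompactness the concrete form stated in the theorem. Precompactness modulo symmetries gives continuous functions $N(t)>0$, $\xi(t)\in\mathbb{R}^4$, $x(t)\in\mathbb{R}^4$ such that $\{N(t)^{s_c-2}e^{ix\cdot\xi(t)}u_c(t,N(t)^{-1}(x+x(t)))\}$ is precompact. The crucial observation, which is where the inhomogeneity $|x|^{-b}$ helps, is recorded in Proposition~\ref{Pscatteringsolution}: the spatial decay of the nonlinearity forbids the solution from concentrating at spatial infinity or at infinite frequency, so that the frequency parameter $\xi(t)$ and (after renormalization) the spatial parameter $x(t)$ stay bounded; hence, after adjusting by the (here nontrivial but harmless) translation symmetries and absorbing bounded factors into the compact set, we may take $\xi(t)\equiv 0$ and $x(t)\equiv 0$. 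This yields precisely the precompactness of \eqref{E592}. The reduction to a forward-in-time maximal solution $u:[0,T_{\max})\times\mathbb{R}^4\to\mathbb{C}$ blowing up its scattering norm forward in time is then a matter of restricting and time-translating.

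The final step is to arrange $\inf_{t\in[0,T_{\max})}N(t)\ge 1$. This uses a rescaling argument together with a local constancy / limiting property of $N(t)$: if $\liminf N(t)=0$, one rescales along a sequence $t_n$ with $N(t_n)\to 0$ to produce, by the compactness, a new minimal solution; combining this with the local well-posedness time bound (Theorem~\ref{TLocalwellposedness}) — which gives a lower bound on the length of the lifespan in terms of $N(t)$ — one derives that such degeneration is incompatible with the solution blowing up in finite scattering norm, or else one directly rescales the whole solution so that $\inf N\ge 1$ and checks the hypotheses are preserved. Concretely, since the set \eqref{E592} is scaling-invariant in an appropriate sense, replacing $u$ by a rescaled copy $u_\mu(t,x)=\mu^{s_c-2}u(\mu^2 t,\mu x)$ transforms $N(t)$ to $\mu^{-1}N(\mu^2 t)$ while preserving all the stated properties; choosing $\mu$ small along the orbit one can normalize $\inf N(t)\ge1$ after passing to the part of the lifespan where this holds and re-invoking minimality.

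I expect the main obstacle to be the profile decomposition step and, within it, controlling the nonlinear interactions of the profiles under the inhomogeneous nonlinearity: the factor $|x|^{-b}$ breaks translation invariance in space, so the usual ``decoupling of nonlinear profiles'' must be handled carefully — profiles whose spatial centers $x_n^j$ escape to infinity see an effectively vanishing potential and hence (by the small-data theory together with the decay of $|x|^{-b}$) contribute negligibly, while profiles with bounded centers can be treated by near-standard arguments after localization. Making this dichotomy rigorous, and verifying that the approximate-solution hypotheses of the stability theorem (Theorem~\ref{TStability}) are met uniformly, is the technical heart of the argument; the extraction of $N(t)$, $x(t)$, $\xi(t)$ and the normalization $\inf N\ge1$ are comparatively routine once precompactness modulo symmetries is in hand.
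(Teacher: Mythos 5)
Your proposal is essentially the paper's approach: the paper does not spell out the proof in detail, but points to the Kenig--Merle concentration-compactness framework via Shao's linear profile decomposition in $\dot H^{s_c}$, nonlinear profiles assembled via the stability theory (Theorem~\ref{TStability}), and the new ingredient Proposition~\ref{Pscatteringsolution}, which rules out profiles escaping to spatial infinity so that the spatial translation parameter can be taken identically zero --- exactly the roadmap you describe. One caution worth flagging: since \eqref{INLS} is \emph{not} translation-invariant, one cannot ``adjust by the translation symmetries'' to center the solution; the correct move, which you in fact also state, is to observe that Proposition~\ref{Pscatteringsolution} forces the spatial center to stay bounded relative to $N(t)^{-1}$, and then to drop $x(t)$ from the compactness statement by enlarging the compact set rather than by translating $u$. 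Also a minor sign slip: the scaling that preserves \eqref{INLS} is $u_\mu(t,x)=\mu^{2-s_c}u(\mu^2 t,\mu x)$, which sends $N(t)\mapsto \mu N(\mu^2 t)$, not $\mu^{s_c-2}$ and $\mu^{-1}N(\mu^2 t)$ as written; this does not affect the logic of the normalization $\inf N\ge 1$.
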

	\begin{remark}
		By Arzala-Ascoli theorem,  (\ref{E592}) implies that for any  $\eta>0, $ there exists  $C(\eta) >0$ such that 
		\begin{equation}
		\int _{|x|\ge \frac{C(\eta)}{N(t)}}||\nabla|^{s_c}u(t,x)|^2dx+	\int _{|\xi|\ge C(\eta)N(t)}|\xi|^{2s_c}|\hat u(t.\xi)|^2d\xi \le \eta \label{Ecompact}
		\end{equation}
		\end{remark}
	In the rest of this paper, we will call the solutions found in Theorem \ref{TReduction} as \emph{almost periodic solutions}. 
The detailed construction of such solutions in the context of the standard $\dot{H}^{s_c}$-critical NLS was carried out in \cite{KillipVisan2010, Murphy2014}. In the $\dot{H}^1$-critical setting of the (INLS), a similar construction was established by Guzman and Murphy \cite{GuzmanMurphy2021JDE}. Following the arguments developed in \cite{KillipVisan2010, Murphy2014, GuzmanMurphy2021JDE}, the key ingredients in the construction are the linear profile decomposition, as developed by Shao \cite{Shao2009EJDE}, and the embedding of nonlinear profiles (will be presented in detail in  Proposition \ref{Pscatteringsolution}).

  	To complete the proof of Theorem \ref{T1}, it sufficies to  rule out the existence of almost periodic solutions as described in Theorem \ref{TReduction}.   We first note that for the maximal-lifespan solution found in  Theorem  \ref{TReduction},  $N(t)$  enjoys the following properties: Lemma \ref{LLocal constant}, Corollary \ref{C1.9} and Lemma \ref{LSpacetime bounds} (see \cite{KillipVisan2013} for details). 
 
	\begin{lemma}[Local constancy]\label{LLocal constant}
		Let $u: I \times \mathbb{R}^4 \to \mathbb{C}$ be a maximal-lifespan almost periodic solution to \eqref{INLS}. Then there exists $\delta = \delta(u) > 0$ such that for all $t_0 \in I$,
		\[
		[t_0 - \delta N(t_0)^{-2}, t_0 + \delta N(t_0)^{-2}] \subset I.
		\]
		Moreover, $N(t) \sim_u N(t_0)$ for $|t - t_0| \leq \delta N(t_0)^{-2}$.
	\end{lemma}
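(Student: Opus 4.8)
\emph{Proof strategy.} The plan is to carry out the standard compactness-and-contradiction argument (following \cite{KillipVisan2013}), using only the scaling symmetry $u(t,x)\mapsto\lambda^{s_c-2}u(\lambda^{-2}t,\lambda^{-1}x)$ of \eqref{INLS}, the local theory (Theorem~\ref{TLocalwellposedness}), the stability theory (Theorem~\ref{TStability}), and the precompactness of the orbit \eqref{E592}. It matters here that the compact orbit in Theorem~\ref{TReduction} carries \emph{no spatial translation parameter}, so the scaling symmetry can be applied without disturbing the weight $|x|^{-b}$. Both assertions will be proved by contradiction through the same device: rescale $u$ by $N(t_n)$ about a bad time $t_n$, extract via \eqref{E592} a $\dot H^{s_c}$-limit $w_0$ of the rescaled data, solve forward with data $w_0$ by Theorem~\ref{TLocalwellposedness}, and transfer existence and convergence back to the rescaled solutions using Theorem~\ref{TStability}.

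\emph{Step 1 (the lifespan inclusion).} Suppose no such $\delta$ exists. Then for each $n$ there is $t_n\in I$ with, after passing to a subsequence so that the offending endpoint is always the forward one, $t_n+\tfrac1nN(t_n)^{-2}\notin I$, which forces $\sup I\le t_n+\tfrac1nN(t_n)^{-2}$. Set $v_n(s,y):=N(t_n)^{s_c-2}u\big(t_n+N(t_n)^{-2}s,\,N(t_n)^{-1}y\big)$; this solves \eqref{INLS} with maximal interval $J_n=N(t_n)^2(I-t_n)$, so $\sup J_n\le\tfrac1n\to0$, while $v_n(0)$ lies in the precompact set $\mathcal K$ of \eqref{E592}. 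Along a subsequence $v_n(0)\to w_0$ in $\dot H^{s_c}$; the maximal-lifespan solution $w$ with $w(0)=w_0$ exists with finite scattering size on some $[-\delta_0,\delta_0]$, $\delta_0>0$, so Theorem~\ref{TStability} forces $v_n$ to exist on $[-\delta_0,\delta_0]$ for $n$ large, i.e.\ $\sup J_n\ge\delta_0$ --- a contradiction.

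\emph{Step 2 ($N$ is locally constant).} Since the almost periodic $u$ blows up (Theorem~\ref{TReduction}), the small-data part of Theorem~\ref{TLocalwellposedness} yields $\inf_{t\in I}\|u(t)\|_{\dot H^{s_c}}\ge\epsilon_0>0$; as $\|\cdot\|_{\dot H^{s_c}}$ is scaling-invariant, every element of $\mathcal K$ has norm in $[\epsilon_0,C_0]$, so $0\notin\overline{\mathcal K}$. If the second conclusion failed for every $\delta$, then for each $n$ there would be $t_n,t_n'\in I$ with $\sigma_n:=N(t_n)^2(t_n'-t_n)\to0$ but $\mu_n:=N(t_n')/N(t_n)\to0$ or $\mu_n\to\infty$. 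With $v_n$ as in Step~1, $v_n(0)\to w_0\in\overline{\mathcal K}\setminus\{0\}$ along a subsequence; Theorem~\ref{TStability} gives $\|v_n-w\|_{L_t^\infty\dot H_x^{s_c}([-\delta_0,\delta_0])}\to0$, and then $\sigma_n\to0$ together with $w\in C_t\dot H_x^{s_c}$ yields $v_n(\sigma_n)\to w_0$ in $\dot H^{s_c}$. On the other hand, the scaling identity reads $v_n(\sigma_n,y)=\mu_n^{2-s_c}g_n(\mu_ny)$ with $g_n:=N(t_n')^{s_c-2}u(t_n',N(t_n')^{-1}\cdot)\in\mathcal K$. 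Using the quantitative compactness of $\mathcal K$ (from precompactness of \eqref{E592}, cf.\ \eqref{Ecompact}), the $\dot H^{s_c}$-mass of $g_n$ is, up to any prescribed $\eta>0$, concentrated in a fixed frequency annulus uniformly in $n$; hence that of $v_n(\sigma_n)$ is concentrated in $\{c(\eta)\mu_n\le|\xi|\le C(\eta)\mu_n\}$, which escapes to $\infty$ if $\mu_n\to\infty$ and to $0$ if $\mu_n\to0$. Passing to the strong limit $w_0$ then forces $\int_{|\xi|\le R}|\xi|^{2s_c}|\widehat{w_0}(\xi)|^2\,d\xi=0$ for all $R$ (resp.\ the same with $|\xi|\ge R$), i.e.\ $w_0=0$, a contradiction. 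Taking $\delta$ to be the smaller of the two thresholds found in Steps~1--2 completes the proof.

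\emph{Expected main obstacle.} Everything is routine bookkeeping with the scaling symmetry except Step~2, where the quantitative compactness must be played off against the stability theory to exclude frequency drift; the one point to verify is that the form of stability available (Theorem~\ref{TStability}) controls the solution difference in $L_t^\infty\dot H_x^{s_c}$, which it does because the relevant critical Strichartz norm dominates it.
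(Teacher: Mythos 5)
Your proof is correct and reproduces the standard compactness-and-contradiction argument from \cite{KillipVisan2013} that the paper itself cites for this lemma, with the relevant (and correctly flagged) adaptation that the orbit in \eqref{E592} carries no spatial translation parameter, so that the INLS scaling symmetry applies cleanly despite the non-translation-invariant weight $|x|^{-b}$. The only small gap you yourself note — that Theorem~\ref{TStability} as stated bounds a critical Strichartz norm rather than $L_t^\infty\dot H_x^{s_c}$ — is indeed routine to fill by running Strichartz on the Duhamel formula for the difference together with Lemma~\ref{Lemmanonlinearestimate}, so the argument stands.
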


	\begin{corollary}[$N(t)$ at blow-up]\label{C1.9}
		Let $u : I \times \mathbb{R}^4 \to \mathbb{C}$ be a \textit{maximal-lifespan almost periodic solution} to \eqref{INLS}. If $T$ is a finite endpoint of $I$, then $N(t) \gtrsim_u |T - t|^{-1/2}$. In particular, $\lim_{t \to T} N(t) = \infty$. If $I$ is infinite or semi-infinite, then for any $t_0 \in I$, we have $N(t) \gtrsim_u \langle t - t_0 \rangle^{-1/2}$.
	\end{corollary}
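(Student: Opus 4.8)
The plan is to read off both assertions from the local constancy property (Lemma~\ref{LLocal constant}) alone. Fix once and for all $\delta=\delta(u)>0$ and constants $0<c\le1\le C$, depending only on $u$, such that $cN(t_0)\le N(s)\le CN(t_0)$ whenever $t_0\in I$ and $|s-t_0|\le\delta N(t_0)^{-2}$; recall also from Theorem~\ref{TLocalwellposedness}(1) that $I$ is open, so a finite endpoint of $I$ does not belong to $I$.

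\textbf{Finite endpoint.} Suppose $T=\sup I<\infty$; a finite left endpoint is handled by the time-reversal symmetry of \eqref{INLS}. For $t\in I$ one has $t<T$, and Lemma~\ref{LLocal constant} applied at $t$ gives $t+\delta N(t)^{-2}\in I$, hence $t+\delta N(t)^{-2}\le\sup I=T$. This rearranges to $N(t)\ge\sqrt{\delta}\,|T-t|^{-1/2}\gtrsim_u|T-t|^{-1/2}$, and letting $t\to T$ gives $N(t)\to\infty$.

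\textbf{Unbounded $I$.} Fix $t_0\in I$; I would establish $N(t)^{-2}\lesssim_{u,t_0}\langle t-t_0\rangle$ for all $t\in I$, treating $t\ge t_0$ (the case $t\le t_0$ is symmetric). If $\sup I<\infty$, then $\inf I=-\infty$, and the finite-endpoint bound gives $N(t)\gtrsim_u|\sup I-t|^{-1/2}\gtrsim_{u,t_0}1$ on $t_0\le t<\sup I$, so assume $\sup I=\infty$. Set $\tau_0:=t_0$ and $\tau_{k+1}:=\tau_k+\delta N(\tau_k)^{-2}$; Lemma~\ref{LLocal constant} keeps every $\tau_k$ in $I$, makes $N\sim_u N(\tau_k)$ on $[\tau_k,\tau_{k+1}]$, and gives $N(\tau_{k+1})\ge cN(\tau_k)$. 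The increasing sequence $(\tau_k)$ must diverge to $+\infty$: a finite limit $\tau_\ast$ would lie in $I$ (as $\sup I=\infty$), and Lemma~\ref{LLocal constant} at $\tau_\ast$ would bound $\tau_{k+1}-\tau_k=\delta N(\tau_k)^{-2}$ below by $\delta(CN(\tau_\ast))^{-2}>0$ for $k$ large, contradicting $\tau_{k+1}-\tau_k\to0$. Thus $t\in[\tau_{K-1},\tau_K]$ for a first index $K\ge1$ (the case $t=t_0$ is trivial). If $K=1$ then $|t-t_0|\le\delta N(t_0)^{-2}$ and $N(t)\sim_u N(t_0)\gtrsim_{u,t_0}1$; if $K\ge2$ then
\[
\delta N(\tau_{K-2})^{-2}\ \le\ \sum_{k=0}^{K-2}\delta N(\tau_k)^{-2}\ =\ \tau_{K-1}-t_0\ \le\ t-t_0 ,
\]
so, combining $N(\tau_{K-1})\ge cN(\tau_{K-2})$ with $N(t)\sim_u N(\tau_{K-1})$, one obtains $N(t)^{-2}\lesssim_u N(\tau_{K-2})^{-2}\le\delta^{-1}(t-t_0)$. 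In every case $N(t)^{-2}\lesssim_{u,t_0}1+|t-t_0|\lesssim\langle t-t_0\rangle$, as wanted.

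\textbf{Main difficulty.} The one place needing genuine care is verifying that the iteration exhausts the half-line, i.e.\ that $\tau_k\to+\infty$; this is exactly where the hypothesis that the relevant endpoint of $I$ is infinite is used, via the fact that a finite accumulation point of $\{\tau_k\}$ would lie in the open interval $I$ and contradict local constancy there. Apart from this, the corollary is a routine bookkeeping consequence of Lemma~\ref{LLocal constant}.
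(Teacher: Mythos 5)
Your proof is correct. The paper does not actually supply a proof of this corollary (nor of Lemma~\ref{LLocal constant} or Lemma~\ref{LSpacetime bounds}); it simply remarks ``see \cite{KillipVisan2013} for details.'' Your argument is precisely the standard one in that reference: read the finite-endpoint bound directly off Lemma~\ref{LLocal constant} together with the openness of $I$ from Theorem~\ref{TLocalwellposedness}, and in the unbounded case run a greedy iteration $\tau_{k+1}=\tau_k+\delta N(\tau_k)^{-2}$, show $\tau_k\to\infty$ (the finite-accumulation-point contradiction is the right observation, and it is exactly where unboundedness of $I$ is used), and then telescope. The bookkeeping checks out, including the lower bound $N(\tau_{k+1})\ge cN(\tau_k)$ and the inequality $\delta N(\tau_{K-2})^{-2}\le\tau_{K-1}-t_0\le t-t_0$.

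One small notational point worth being aware of: your conclusion is honestly $N(t)\gtrsim_{u,t_0}\langle t-t_0\rangle^{-1/2}$, with the constant entering through $N(t_0)$ in the $K=1$ case (and through $|\sup I-t_0|$ in the semi-infinite sub-case). The paper writes $\gtrsim_u$, following the convention in \cite{KillipVisan2013}, where the dependence on the fixed reference time $t_0$ is absorbed into the implicit constant. There is no way to make the constant genuinely uniform in $t_0$ without an a priori lower bound on $N$, since the sharp output of this argument is $N(t)^{-2}\lesssim_u N(t_0)^{-2}+|t-t_0|$. So your more careful subscript is, if anything, the more precise statement; it does not indicate a gap.
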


	\begin{lemma}[Spacetime bounds]\label{LSpacetime bounds}
		Let $u: I \times \mathbb{R}^4 \to \mathbb{C}$ be an almost periodic solution to \eqref{INLS}. Then, the following bound holds:
		\[
		\int_I N(t)^2 \, dt \lesssim_u \| |\nabla|^{s_c} u \|_{L_t^q L_x^{r,2}(I \times \mathbb{R}^4)}^q \lesssim_u 1 + \int_I N(t)^2 \, dt,
		\]
		where $(q, r)$ is an admissible pair (see Definition 2.2 below) with $q < \infty$.
	\end{lemma}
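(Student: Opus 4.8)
The plan is to follow \cite{KillipVisan2013} and prove both inequalities by splitting $I$ into subintervals on which $N(t)$ is essentially constant, establishing matching two-sided bounds for $\||\nabla|^{s_c}u\|_{L^q_tL^{r,2}_x}$ on each piece. By the Local Constancy Lemma \ref{LLocal constant} one writes $I=\bigcup_kJ_k$, up to a null set, as a disjoint union of intervals with $N(t)\sim_uN_k$ on $J_k$ and $|J_k|\sim_uN_k^{-2}$; after using the scaling symmetry of \eqref{INLS} to normalize the frequency scale on $J_k$ to $N_k=1$ and, if necessary, refining the partition, one may arrange that the normalized length of each $J_k$ lies in $[c_u\delta_0,\delta_0]$ for a small $\delta_0=\delta_0(u)>0$ fixed below, while keeping $\#\{k\}\lesssim_u1+\int_IN(t)^2\,dt$. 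Since the $J_k$ are essentially disjoint and $q<\infty$, there holds
\begin{equation}
\||\nabla|^{s_c}u\|_{L^q_tL^{r,2}_x(I)}^q=\sum_k\||\nabla|^{s_c}u\|_{L^q_tL^{r,2}_x(J_k)}^q,\notag
\end{equation}
and each complete $J_k$ satisfies $\int_{J_k}N(t)^2\,dt\sim_u1$, with at most two incomplete intervals at the endpoints of $I$, so that $\int_IN(t)^2\,dt\lesssim_u\#\{k\}\lesssim_u1+\int_IN(t)^2\,dt$. It therefore suffices to prove $\||\nabla|^{s_c}u\|_{L^q_tL^{r,2}_x(J_k)}\sim_u1$ uniformly in $k$.

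By almost periodicity together with the a priori bound \eqref{Ebound}, the datum $u(t_k)$ at an endpoint $t_k\in J_k$ lies, as $k$ ranges, in a fixed precompact set $K\subset\dot H^{s_c}(\mathbb{R}^4)$; moreover $0\notin K$, since a datum of small $\dot H^{s_c}$ norm would, by the small-data part of Theorem \ref{TLocalwellposedness}, generate a global solution of finite scattering size, contradicting item (1) of Theorem \ref{TReduction}. Using that $\|e^{it\Delta}\phi\|_{L^\gamma_tL^{m,2}_x([0,\tau])}\to0$ as $\tau\to0$ uniformly over the compact set $K$ (by Proposition \ref{P:SZ} and dominated convergence), and then the stability theory of Theorem \ref{TStability}, one fixes $\delta_0=\delta_0(u)>0$ so that every solution issuing from a datum in $K$ exists on an interval of length $\delta_0$ with scattering size at most a small $\epsilon_1$. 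Consequently $S_{J_k}(u)\le\epsilon_1$ for every $k$, and a standard bootstrap from Duhamel's formula, using Proposition \ref{P:SZ} and the nonlinear estimate (Lemma \ref{Lemmanonlinearestimate}), lets one absorb the nonlinear contribution and conclude the upper bound $\||\nabla|^{s_c}u\|_{L^q_tL^{r,2}_x(J_k)}\lesssim_u1$.

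For the lower bound, let $\tau_0=c_u\delta_0$ be the minimal normalized interval length and consider the map $\phi\mapsto\||\nabla|^{s_c}v_\phi\|_{L^q_tL^{r,2}_x([0,\tau_0])}$, where $v_\phi$ is the solution with $v_\phi(0)=\phi$. Having fixed $\delta_0$ as above, all these solutions exist on $[0,\tau_0]$ with uniformly bounded Strichartz norms, so by the well-posedness and stability theory (Theorems \ref{TLocalwellposedness} and \ref{TStability}) this map is continuous on $K$; and it is strictly positive there, because $\||\nabla|^{s_c}v_\phi\|_{L^q_tL^{r,2}_x([0,\tau_0])}=0$ would force $v_\phi\equiv0$ on $[0,\tau_0]$ and hence $\phi=0\notin K$. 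By compactness of $K$ it is therefore bounded below by a constant $c(u)>0$. Since each $J_k$, in the normalized variables, contains a subinterval of length $\tau_0$ starting at $t_k$ and $u(t_k)\in K$, this gives $\||\nabla|^{s_c}u\|_{L^q_tL^{r,2}_x(J_k)}\ge c(u)$. Combining the two per-interval bounds and summing over $k$ via the first-paragraph identities completes the proof.

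The main obstacle is making the per-interval bounds \emph{uniform in $k$}: this is precisely where the precompactness of the rescaled orbit $\{N(t)^{s_c-2}u(t,N(t)^{-1}x):t\in I\}$ from Theorem \ref{TReduction} enters, in tandem with the stability theory over the compact family $K$ (for the upper bound) and with the non-vanishing of $K$ together with continuity of the solution map (for the lower bound). Two further technical points require care: every Strichartz and nonlinear estimate must be carried out in the Lorentz-refined spaces $L^{r,2}_x$ and must accommodate the singular weight $|x|^{-b}$ — exactly what Proposition \ref{P:SZ} and Lemma \ref{Lemmanonlinearestimate} are built to provide — and the partition must be organized so that all normalized interval lengths fall below the local-theory threshold $\delta_0(u)$ while retaining $\#\{k\}\lesssim_u1+\int_IN(t)^2\,dt$, including the bookkeeping for the (at most two) boundary intervals that produce the additive constant $1$.
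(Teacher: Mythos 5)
Your argument is correct and is essentially the standard proof in Killip--Visan \cite{KillipVisan2013}, which is exactly what the paper cites for this lemma (the paper gives no proof of its own). The decomposition of $I$ into characteristic intervals $J_k$ via Lemma \ref{LLocal constant}, the reduction to showing $\||\nabla|^{s_c}u\|_{L^q_tL^{r,2}_x(J_k)}\sim_u 1$ uniformly in $k$, the upper bound via small-data local theory uniform over the precompact rescaled orbit, and the lower bound via continuity plus strict positivity of the per-interval Strichartz norm on a compact set of data bounded away from zero are all the standard ingredients; your treatment of them is sound.

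One small point worth keeping in mind, which you handle implicitly but could make explicit: when you normalize $N_k$ to $1$ by the scaling symmetry, the INLS equation is preserved only because there is no spatial translation parameter $x(t)$ in the almost periodic orbit (the factor $|x|^{-b}$ is not translation invariant). For this paper's minimal solutions this is guaranteed by Proposition \ref{Pscatteringsolution} and the reduction in Theorem \ref{TReduction}, but it is a feature specific to the inhomogeneous setting and is the reason the rescaled data $N(t)^{s_c-2}u(t,N(t)^{-1}\cdot)$ form a precompact set $K$ in $\dot H^{s_c}$ without any $x(t)$ normalization. Beyond that, the only other bookkeeping item is what you already flag: the refinement of the partition to bring all normalized interval lengths below the local-theory threshold $\delta_0(u)$ multiplies $\#\{k\}$ by an $O_u(1)$ factor, which is harmless.
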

	
	In  Section \ref{S4}, using  Lemma \ref{LLocal constant}, Corollary \ref{C1.9}, Lemma \ref{LSpacetime bounds} and the space-localized Morawetz inequality  employed in Bourgain \cite{Bourgain1999} in the study of the radial energy-critical NLS,  we rule out the  almost periodic solutions  in  the case $1 < s_c < 3/2$.

	In the case  $\frac{3}{2}\le s_c<2$, we first refine the class of almost periodic solutions. Using rescaling arguments as in \cite{KillipTaoVisan2009,KillipVisan2010AJM,TaoVisanZhang2007}, 
	we can ensure that the almost periodic solutions under consideration do not escape to arbitrarily low frequencies on at least half of their maximal lifespan, say $[0, T_{\text{max}})$.  
	By applying Lemma 1.5 to partition $[0, T_{\text{max}})$ into characteristic subintervals $J_k$, we obtain the following theorem. 
	\begin{theorem}[Two scenarios for blow-up]\label{TReduction2}
		If Theorem \ref{T1} does not hold, then there exists an almost periodic solution $u: [0, T_{\text{max}}) \times \mathbb{R}^4 \to \mathbb{C}$ that blows up forward in time and satisfies
		\[
		u \in L_t^\infty \dot{H}_x^{s_c}([0, T_{\text{max}}) \times \mathbb{R}^4).
		\]
		Additionally, $[0, T_{\text{max}})$ can be decomposed as $[0, T_{\text{max}}) = \bigcup_k J_k$, where
		\[
		N(t) \equiv N_k \geq 1 \quad \text{for} \quad t \in J_k, \quad \text{with} \quad |J_k| \sim_u N_k^{-2}.
		\]
		The solution $u$ falls into one of the following two cases:
		\[
		\int_0^{T_{\text{max}}} N(t)^{3-2s_c} \, dt < \infty \quad \text{(rapid frequency cascade solution)},
		\]
		or
		\[
		\int_0^{T_{\text{max}}} N(t)^{3-2s_c} \, dt = \infty \quad \text{(quasi-soliton solution)}.
		\]
	\end{theorem}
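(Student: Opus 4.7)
The plan is to start from the minimal counterexample $u$ produced by Theorem \ref{TReduction}, which already furnishes a maximal-lifespan, almost periodic solution $u : [0,T_{\max}) \times \mathbb{R}^4 \to \mathbb{C}$ blowing up its scattering norm forward in time with $\inf_{t \in [0,T_{\max})} N(t) \geq 1$, and then refine it in two steps: first a rescaling/time-translation that restricts attention to a \emph{good} forward half-lifespan, and then a partition into the characteristic subintervals $J_k$ using local constancy.

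First I would invoke a rescaling argument in the spirit of \cite{KillipTaoVisan2009, KillipVisan2010AJM, TaoVisanZhang2007}. The scaling invariance $u(t,x) \mapsto \lambda^{(2-b)/\alpha} u(\lambda^2 t, \lambda x)$ preserves the class of almost periodic minimal counterexamples, and it rescales the frequency function as $N(t) \mapsto \lambda N(\lambda^2 t)$. Combining this with a time-translation and an extraction argument along sequences for which $N$ attempts to drift to low frequencies, one produces (possibly after passing to a new minimal solution via the compactness of the set of such counterexamples) an almost periodic solution, still denoted $u$, whose frequency scale satisfies $N(t) \geq 1$ on at least a forward half-lifespan, which we relabel $[0,T_{\max})$. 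Theorem \ref{TLocalwellposedness}(2) and Corollary \ref{C1.9} guarantee that this refined $u$ still blows up its scattering norm forward in time, while (\ref{Ebound}) is preserved by the rescaling.

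Next I would partition $[0,T_{\max})$ by direct application of Lemma \ref{LLocal constant}. With $\delta = \delta(u) > 0$ as in that lemma, set $t_0 = 0$ and inductively define $t_{k+1} = t_k + \delta N(t_k)^{-2}$, $J_k := [t_k,t_{k+1})$; Corollary \ref{C1.9} ensures this exhausts $[0,T_{\max})$. On each $J_k$ the local constancy yields $N(t) \sim_u N(t_k)$ and $|J_k| = \delta N(t_k)^{-2}$. After redefining $N$ to be the piecewise constant function $N(t) \equiv N_k := N(t_k)$ on each $J_k$, which is legitimate since almost periodicity is invariant under multiplicative perturbations of $N$ bounded in terms of $u$, one obtains the desired decomposition $[0,T_{\max}) = \bigcup_k J_k$ with $|J_k| \sim_u N_k^{-2}$ and $N_k \geq 1$ by Step 1.

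Finally, the dichotomy is a trivial case distinction on the value of the scale-invariant quantity $\int_0^{T_{\max}} N(t)^{3-2s_c}\, dt \in [0,\infty]$: finiteness defines the rapid frequency cascade scenario, and infiniteness the quasi-soliton scenario. The main obstacle is the rescaling reduction in Step 1; everything downstream is bookkeeping. The delicate point there is to verify, as in \cite{KillipTaoVisan2009}, that if no such refinement existed one could extract a new almost periodic solution with strictly smaller $\dot H^{s_c}$-orbit, contradicting the minimality built into Theorem \ref{TReduction}. The partition of Step 2 and the dichotomy of Step 3 are then routine.
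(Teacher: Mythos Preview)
Your proposal is correct and follows essentially the same approach as the paper, which likewise cites the rescaling arguments of \cite{KillipTaoVisan2009,KillipVisan2010AJM,TaoVisanZhang2007} to arrange $N(t)\ge 1$ on the forward half-lifespan and then applies Lemma~\ref{LLocal constant} to partition $[0,T_{\max})$ into characteristic subintervals, with the dichotomy being a tautological case split. Note that Theorem~\ref{TReduction} as stated in this paper already incorporates the normalization $\inf_{t}N(t)\ge 1$ on $[0,T_{\max})$, so your Step~1 is in fact redundant here and you may proceed directly to the partition.
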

	In view of this theorem, to prove Theorem \ref{T1} in the case  $\frac{3}{2}\le s_c<2$, it suffices to rule out all the possible scenarios described in Theorem \ref{TReduction2}.
	
	In Section \ref{S5}, we exclude the possibility of "rapid frequency-cascade solutions." Two key analytical tools are employed to achieve this. The first is the long-time Strichartz estimate, originally introduced by Dodson \cite{Dodson2012} in the context of mass-critical nonlinear Schrödinger equations. For further applications of this method to the energy-critical, energy-subcritical, and energy-supercritical regimes, see \cite{Visan2012,Murphy2014,Murphy2015}. See also \cite{LiuMiaoZheng}, where a long-time Strichartz estimate was established specifically for the mass-critical inhomogeneous NLS, which is more closely related to the supercritical INLS considered in this paper. 
	The second tool is the following "No-waste Duhamel formula" (see Proposition 5.23 in \cite{KillipVisan2013}):
	\begin{lemma}[No-waste Duhamel formula]\label{LNoWasteDuhamel}
		If $u$ is an almost periodic solution on the maximal lifespan $I$, then for any $t \in I$, we have
		\begin{equation}
			u(t) = \lim_{T \to T_{\text{max}}} i \int_t^T e^{i(t-s)\Delta}|x|^{-b} |u(s,x)|^{\alpha} u(s,x) \, ds,\notag
		\end{equation}
		where the equality holds in the weak limit sense in $\dot H^{s_c}(\mathbb{R}^4)$.
	\end{lemma}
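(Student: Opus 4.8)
The plan is to reduce the lemma to a soft statement about the free evolution. By the Duhamel formula in the definition of a strong solution, for any $t,T\in I$ one has
\[
u(t)=e^{i(t-T)\Delta}u(T)+i\int_t^T e^{i(t-s)\Delta}\bigl(|x|^{-b}|u(s)|^{\alpha}u(s)\bigr)\,ds ,
\]
so it suffices to show that $e^{i(t-T)\Delta}u(T)\rightharpoonup 0$ weakly in $\dot H^{s_c}(\mathbb R^4)$ as $T\to T_{\max}$; the asserted weak limit of the Duhamel integral then follows by subtraction. Since $\sup_{T\in I}\|u(T)\|_{\dot H^{s_c}}<\infty$ by \eqref{Ebound}, and since Schwartz functions whose Fourier transform is supported in a fixed annulus $\{\rho\le|\xi|\le\rho^{-1}\}$ away from the origin are dense in $\dot H^{s_c}(\mathbb R^4)$ (here $s_c<2=d/2$), it is enough to prove $\langle e^{i(t-T)\Delta}u(T),g\rangle_{\dot H^{s_c}}\to0$ for each such test function $g$.

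Fixing such a $g$ and $\eta>0$, I would first use the precompactness built into Theorem~\ref{TReduction} — that is, \eqref{Ecompact} together with its (equally valid) low-frequency analogue, both following from total boundedness in $\dot H^{s_c}$ with $s_c>0$ — to find $C=C(\eta)$ with $\bigl\|(1-P_{[N(T)/C,\,CN(T)]})u(T)\bigr\|_{\dot H^{s_c}}\le\sqrt\eta$ for all $T\in I$, where $P$ denotes a Littlewood--Paley projection. Cauchy--Schwarz then gives
\[
\bigl|\langle e^{i(t-T)\Delta}u(T),g\rangle_{\dot H^{s_c}}\bigr|\le\sqrt\eta\,\|g\|_{\dot H^{s_c}}+\bigl|\langle e^{i(t-T)\Delta}P_{[N(T)/C,\,CN(T)]}u(T),g\rangle_{\dot H^{s_c}}\bigr|,
\]
and the last term vanishes identically whenever $CN(T)<\rho$ or $N(T)/C>\rho^{-1}$, because the two Fourier supports are then disjoint. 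Thus the only times needing further attention are those with $N(T)$ in the fixed compact range $[\rho/C,\,C\rho^{-1}]$.

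To deal with those, note that if $T_{\max}<\infty$ then $N(T)\to\infty$ as $T\to T_{\max}$ by Corollary~\ref{C1.9}, so such times do not accumulate at $T_{\max}$ and we are done; hence we may assume $T_{\max}=\infty$, so $t-T\to-\infty$, and we are reduced to sequences $T_n\to\infty$ with $N(T_n)$ in a fixed compact subset of $(0,\infty)$. For such $T_n$, undoing the (now bounded) rescaling shows that $\{u(T_n)\}$ is itself precompact in $\dot H^{s_c}$, since the dilation $g\mapsto\lambda^{2-s_c}g(\lambda\,\cdot)$ is an $\dot H^{s_c}$-isometry for each $\lambda$ and is jointly continuous in $(\lambda,g)$; consequently $u(T_n)$ lies within $\sqrt\eta$ of one of finitely many Schwartz profiles $h_1,\dots,h_M$. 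Since, for each fixed $j$,
\[
\langle e^{i\tau\Delta}h_j,g\rangle_{\dot H^{s_c}}=\int|\xi|^{2s_c}e^{-i\tau|\xi|^2}\hat h_j(\xi)\,\overline{\hat g(\xi)}\,d\xi\longrightarrow 0\qquad\text{as }\tau\to-\infty
\]
by non-stationary phase (the amplitude is smooth and supported where $\nabla_\xi|\xi|^2\ne0$), taking $\tau=t-T_n$ and combining with the uniform bound above yields $\limsup_{T\to T_{\max}}\bigl|\langle e^{i(t-T)\Delta}u(T),g\rangle_{\dot H^{s_c}}\bigr|\lesssim\sqrt\eta\,\|g\|_{\dot H^{s_c}}$; letting $\eta\to0$ finishes the argument.

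The hard part is exactly this last step in the infinite-lifespan regime with $N(T)$ bounded away from both $0$ and $\infty$: there the pairing does not die for frequency-support reasons, and one must genuinely combine the precompactness of the orbit (to reduce to finitely many fixed profiles) with the dispersive decay of $e^{i\tau\Delta}$. Everything else is routine. In particular, the inhomogeneous nonlinearity enters only through the validity of the Duhamel representation, guaranteed by the local theory in Theorem~\ref{TLocalwellposedness}, so no feature specific to the INLS is used in this lemma.
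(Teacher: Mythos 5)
Your proof is correct. The paper does not present its own argument for this lemma, citing only Proposition 5.23 of Killip--Visan \cite{KillipVisan2013}; your proof is a faithful specialization of that standard weak-convergence argument (reduce via Duhamel to $e^{i(t-T)\Delta}u(T)\rightharpoonup 0$, use the frequency localization furnished by almost periodicity to dispose of $N(T)$ escaping to $0$ or $\infty$, and use precompactness of the orbit together with dispersive decay of the free propagator when $N(T)$ stays bounded) to the translation-free setting of the INLS.
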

	
	Using the long-time Strichartz estimate and the "No-waste Duhamel formula," we deduce that "rapid frequency-cascade solutions" must have zero mass, which contradicts the assumption that the solution blows up. Hence, "rapid frequency-cascade solutions" can be ruled out.
	
	In Section \ref{S6}, we exclude the existence of "quasi-soliton solutions". The primary tool employed here is the frequency-localized Morawetz inequality (Theorem \ref{TMorawetz}). The key idea in Theorem \ref{TMorawetz} is to apply a high-frequency truncation to the standard Morawetz inequality and then estimate the error terms. For these error estimates, the long-time Strichartz estimate established earlier plays a crucial role. The frequency-localized Morawetz inequality provides a uniform bound on $\int_I N(t)^{3-2s_c} \, dt$ for any compact interval $I \subset [0, T_{\text{max}})$. By choosing $I \subset [0, T_{\text{max}})$ sufficiently large, we arrive at a contradiction, thereby excluding the existence of "quasi-soliton solutions." This completes the proof of Theorem \ref{T1}.
 
	\section{Preliminaries}\label{S:2}
	\subsection{Some notations}
	We use the standard notation for mixed Lebesgue space-time norms and Sobolev spaces. 
	We write  $A\lesssim  B$ to denote  $A\le CB$ for some  $C>0$. If  $A\lesssim B$ and  $B\lesssim A$, then we write  $A\approx B$.  We write  $A\ll B$   to denote  $A\le cB$ for some  small $c>0$.  If  $C$ depends upon some additional parameters, we will indicate this with subscripts; for example,  $X\lesssim _u Y$ denotes that  $X\le C_u Y$ for some  $C_u$ depending on  $u$.    We use  $O(Y)$ to denote any quantity  $X$  such that  $|X|\lesssim  Y$. We use     $\langle x \rangle $ to denote  $(1+|x|^2)^{\frac{1}{2}}$.        We write $L_t^q L_x^r$ to denote the Banach space with norm
	\[
	\|u\|_{L_t^q L_x^r (\mathbb{R} \times \mathbb{R}^d)} := \left( \int_{\mathbb{R}} \left( \int_{\mathbb{R}^d} |u(t, x)|^r \, dx \right)^{q/r} \, dt \right)^{1/q},
	\]
	with the usual modifications when $q$ or $r$ are equal to infinity, or when the domain $\mathbb{R} \times \mathbb{R}^d$ is replaced by spacetime slab such as $I \times \mathbb{R}^d$. When $q = r$ we abbreviate $L_t^q L_x^q$ as $L_{t,x}^q$. 
	
	Let $\phi(\xi)$ be a radial bump function supported in the ball $\{\xi \in \mathbb{R}^d : |\xi| \leq 2\}$ and equal to  $1$ on the ball $\{\xi \in \mathbb{R}^d : |\xi| \leq 1\}$. For each number $N > 0$, we define the Fourier multipliers
	\begin{equation}
		\widehat{P_{\leq N} f}(\xi) := \phi(\xi/N) \widehat{f}(\xi), \quad \widehat{P_{> N} f}(\xi) := (1 - \phi(\xi/N)) \widehat{f}(\xi),\notag
	\end{equation}
	\begin{equation}
		\widehat{P_N f}(\xi) = :\psi(\xi/N) \widehat{f}(\xi) := (\phi(\xi/N) - \phi(2\xi/N)) \widehat{f}(\xi).\notag
	\end{equation}
	We similarly define $P_{< N}$ and $P_{\geq N}$.  For convenience of notation, let $u_N := P_{N}u$, $u_{\leq N} := P_{\leq N}u$, and $u_{>N} := P_{>N}u$.
	We will normally use these multipliers when $M$ and $N$ are dyadic numbers (that is, of the form $2^n$ for some integer $n$); in particular, all summations over $N$ or $M$ are understood to be over dyadic numbers.  
	
	\subsection{Lorentz spaces and  useful lemmas}\label{S:2.2}
	Let $f$ be a measurable function on $\mathbb{R}^d$. The distribution function of $f$ is defined by
	\begin{equation}
		d_f(\lambda):= |\{x\in \mathbb{R}^d : |f(x)|>\lambda\}|, \quad \lambda>0,\notag
	\end{equation}
	where $|A|$ is the Lebesgue measure of a set $A$ in $\mathbb{R}^d$. The decreasing rearrangement of $f$ is defined by
	\begin{equation}
		f^*(s):= \inf \left\{ \lambda>0 : d_f(\lambda)\leq s\right\}, \quad s>0.\notag
	\end{equation}
	
	\begin{definition}[Lorentz spaces] ~\\
		Let $0<r<\infty$ and $0<\rho\leq \infty$. The Lorentz space $L^{r,\rho}(\mathbb{R}^d)$ is defined by
		\begin{equation}
			L^{r,\rho}(\mathbb{R}^d):= \left\{ f \text{ is measurable on } \mathbb{R}^d : \|f\|_{L^{r,\rho}}<\infty\right\}, \notag
		\end{equation}
		where
		\[
		\|f\|_{L^{r,\rho}}:= \left\{
		\begin{array}{cl}
			( \frac{\rho}{r} \int_0^\infty (s^{1/r} f^*(s))^\rho \frac{1}{s}ds)^{1/\rho} &\text{ if } \rho <\infty, \\
			\sup_{s>0} s^{1/r} f^*(s) &\text{ if } \rho=\infty.
		\end{array}
		\right.
		\]
	\end{definition}
	
	We collect the following basic properties of $L^{r,\rho}(\mathbb{R}^d)$ in the following lemmas.
	\begin{lemma}[Properties of Lorentz spaces \cite{ONeil}] \ 
		\begin{itemize}
			\item For $1<r<\infty$, $L^{r,r}(\mathbb{R}^d) \equiv L^r(\mathbb{R}^d)$ and by convention, $L^{\infty,\infty}(\mathbb{R}^d)= L^\infty(\mathbb{R}^d)$.
			\item For $1<r<\infty$ and $0<\rho_1<\rho_2\leq \infty$, $L^{r,\rho_1}(\mathbb{R}^d)\subset L^{r,\rho_2}(\mathbb{R}^d)$. 
			\item For $1<r<\infty$, $0<\rho \leq \infty$, and $\theta>0$, $\||f|^\theta\|_{L^{r,\rho}} = \|f\|^\theta_{L^{\theta r, \theta \rho}}$.
			\item For $b>0$, $|x|^{-b} \in L^{\frac{d}{b},\infty}(\mathbb{R}^d)$ and $\||x|^{-b}\|_{L^{\frac{d}{b},\infty}} = |B(0,1)|^{\frac{b}{d}}$, where $B(0,1)$ is the unit ball of $\mathbb{R}^d$.
		\end{itemize}
	\end{lemma}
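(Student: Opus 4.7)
The four statements are all classical consequences of the definitions and the equimeasurability identity $|\{x:|f(x)|>\lambda\}|=|\{s:f^*(s)>\lambda\}|$; my plan is to verify each one directly from properties of the decreasing rearrangement rather than invoke any external machinery.

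For (1), I would first establish $\int_{\mathbb{R}^d}|f|^r\,dx=\int_0^\infty f^*(s)^r\,ds$ via the layer-cake representation and Fubini; plugging $\rho=r$ into the defining expression of the Lorentz quasinorm reduces it to this integral and yields $\|f\|_{L^{r,r}}=\|f\|_{L^r}$, while $L^{\infty,\infty}=L^\infty$ is immediate from $\sup_s f^*(s)=\|f\|_{L^\infty}$. For (3), the key identity is $(|f|^\theta)^*(s)=(f^*(s))^\theta$, which follows from $d_{|f|^\theta}(\lambda)=d_f(\lambda^{1/\theta})$ by inverting; substituting into $\|\,\cdot\,\|_{L^{r,\rho}}$ and recognizing $s^{1/r}(f^*(s))^\theta=(s^{1/(\theta r)}f^*(s))^\theta$ gives the stated relation. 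For (4), the super-level set $\{|x|^{-b}>\lambda\}$ is the open ball of radius $\lambda^{-1/b}$, so $d_{|x|^{-b}}(\lambda)=|B(0,1)|\lambda^{-d/b}$ and hence $(|x|^{-b})^*(s)=(s/|B(0,1)|)^{-b/d}$; the quantity $s^{b/d}(|x|^{-b})^*(s)\equiv|B(0,1)|^{b/d}$ is independent of $s$, yielding both the membership and the exact value of the norm.

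The genuinely nontrivial item is (2), the nested inclusion, which I expect to be the main obstacle. My plan is a two-step argument. First, I would exploit the monotonicity of $f^*$ to embed $L^{r,\rho_1}\hookrightarrow L^{r,\infty}$: for any $s>0$, using that $f^*$ is nonincreasing on $(0,s]$,
\begin{equation*}
\|f\|_{L^{r,\rho_1}}^{\rho_1}\;\ge\;\tfrac{\rho_1}{r}\int_0^s(\tau^{1/r}f^*(\tau))^{\rho_1}\tfrac{d\tau}{\tau}\;\ge\;(f^*(s))^{\rho_1}\cdot\tfrac{\rho_1}{r}\int_0^s\tau^{\rho_1/r-1}\,d\tau\;=\;(s^{1/r}f^*(s))^{\rho_1},
\end{equation*}
whence $\|f\|_{L^{r,\infty}}\le\|f\|_{L^{r,\rho_1}}$. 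Second, for $\rho_1<\rho_2<\infty$, pointwise interpolation
\begin{equation*}
(s^{1/r}f^*(s))^{\rho_2}\;\le\;\|f\|_{L^{r,\infty}}^{\rho_2-\rho_1}\cdot(s^{1/r}f^*(s))^{\rho_1}
\end{equation*}
and integration against $s^{-1}\,ds$ combine with the first step to give $\|f\|_{L^{r,\rho_2}}\lesssim\|f\|_{L^{r,\rho_1}}$; the case $\rho_2=\infty$ is already covered by the first step. Tracking constants here is routine but slightly awkward, and this is the only place where any real care is required.
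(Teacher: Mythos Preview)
Your proof is correct in all four parts; the rearrangement identities and the two-step embedding argument for item (2) are the standard verifications and work exactly as you outline. Note, however, that the paper does not actually prove this lemma: it is stated as a collection of well-known facts with a citation to O'Neil, so there is no argument in the paper to compare yours against.
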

	\begin{lemma}[H\"older's inequality \cite{ONeil}]  \ 
		\begin{itemize}
			\item  Let $1<r, r_1, r_2<\infty$ and $1\leq \rho, \rho_1, \rho_2 \leq \infty$ be such that
			\[
			\frac{1}{r}=\frac{1}{r_1}+\frac{1}{r_2}, \quad \frac{1}{\rho} \leq \frac{1}{\rho_1}+\frac{1}{\rho_2}.
			\]
			Then for any $f \in L^{r_1, \rho_1}(\mathbb{R}^d)$ and $g\in L^{r_2, \rho_2}(\mathbb{R}^d)$
			\[
			\|fg\|_{L^{r,\rho}} \lesssim \|f\|_{L^{r_1, \rho_1}} \|g\|_{L^{r_2,\rho_2}}.
			\]
			\item Let  $1<r_1,r_2<\infty $ and  $1\le \rho_1,\rho_2\le \infty $  be such that 
			\begin{equation}
				1=\frac{1}{r_1}+\frac{1}{r_2},\quad 1\le \frac{1}{\rho_1}+\frac{1}{\rho_2}.\notag
			\end{equation}
			Then  for any $f \in L^{r_1, \rho_1}(\mathbb{R}^d)$ and $g\in L^{r_2, \rho_2}(\mathbb{R}^d)$
			\begin{equation}
				\|fg\|_{L^1}\lesssim  \|f\|_{L^{r_1, \rho_1}} \|g\|_{L^{r_2,\rho_2}}.\notag
			\end{equation}
		\end{itemize}
	\end{lemma}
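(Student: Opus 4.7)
The plan is to prove both parts by reducing them to ordinary Hölder inequalities on the half-line $(0,\infty)$ equipped with the measure $ds/s$. Two classical facts about the decreasing rearrangement are the workhorses: first, the submultiplicativity bound $(fg)^*(s)\le f^*(s/2)g^*(s/2)$, which follows from observing that $\{|f|>f^*(s/2)\}$ and $\{|g|>g^*(s/2)\}$ each have measure at most $s/2$, so outside their union $|fg|\le f^*(s/2)g^*(s/2)$; and second, the Hardy--Littlewood rearrangement inequality $\int_{\mathbb{R}^d}|fg|\,dx \le \int_0^\infty f^*(s)g^*(s)\,ds$. Both are standard and do not rely on anything beyond the definitions set out in the excerpt.

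For the first inequality I would treat the equality case $1/\rho=1/\rho_1+1/\rho_2$ first. Substituting the submultiplicativity bound into the definition of $\|fg\|_{L^{r,\rho}}$ and performing the change of variable $s\mapsto 2s$ gives, using the identity $1/r=1/r_1+1/r_2$,
\begin{equation*}
\|fg\|_{L^{r,\rho}}^\rho \lesssim \int_0^\infty \bigl(s^{1/r_1}f^*(s)\bigr)^{\rho}\bigl(s^{1/r_2}g^*(s)\bigr)^{\rho}\,\frac{ds}{s}.
\end{equation*}
A standard Hölder inequality on $(0,\infty,\,ds/s)$ with the conjugate pair $\rho_1/\rho,\ \rho_2/\rho$ then finishes the equality case. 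For the general case $1/\rho \le 1/\rho_1+1/\rho_2$, set $1/\tilde\rho:=1/\rho_1+1/\rho_2$, so $\tilde\rho\le\rho$. The equality case gives $\|fg\|_{L^{r,\tilde\rho}}\lesssim \|f\|_{L^{r_1,\rho_1}}\|g\|_{L^{r_2,\rho_2}}$, and the nested embedding $L^{r,\tilde\rho}\hookrightarrow L^{r,\rho}$ (from the preceding lemma, valid since $1<r<\infty$) completes the argument.

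For the second inequality I would start from the Hardy--Littlewood bound and rewrite, using $1/r_1+1/r_2=1$,
\begin{equation*}
\|fg\|_{L^1}\le \int_0^\infty f^*(s)g^*(s)\,ds = \int_0^\infty \bigl(s^{1/r_1}f^*(s)\bigr)\bigl(s^{1/r_2}g^*(s)\bigr)\,\frac{ds}{s}.
\end{equation*}
When $1/\rho_1+1/\rho_2=1$, Hölder on $(0,\infty,\,ds/s)$ with exponents $\rho_1,\rho_2$ yields the claim at once. When the weaker condition $1/\rho_1+1/\rho_2\ge 1$ holds, choose $\tilde\rho_i\le\rho_i$ with $1/\tilde\rho_1+1/\tilde\rho_2=1$, apply the equality case to $\tilde\rho_i$, and use the Lorentz embedding $L^{r_i,\rho_i}\hookrightarrow L^{r_i,\tilde\rho_i}$ to upgrade.

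No genuinely hard analytical step is required beyond these classical rearrangement techniques; the main obstacle is bookkeeping, namely checking that the exponent conditions $1<r,r_i<\infty$ permit the Lorentz embedding invoked in the reduction, and that the constants arising from the dilation $s\mapsto s/2$ and from passing between $\rho$ and $\tilde\rho$ are absorbed into the implicit constant in $\lesssim$.
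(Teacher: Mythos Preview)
The paper does not supply a proof of this lemma; it is simply quoted from O'Neil \cite{ONeil} as a standard tool, so there is no argument in the text to compare against. Your proposal is essentially the classical proof and is correct in substance. One small slip: in the reduction for the second bullet you write ``choose $\tilde\rho_i\le\rho_i$ with $1/\tilde\rho_1+1/\tilde\rho_2=1$,'' but the embedding $L^{r_i,\rho_i}\hookrightarrow L^{r_i,\tilde\rho_i}$ you invoke immediately afterwards requires $\tilde\rho_i\ge\rho_i$. Since $1/\rho_1+1/\rho_2\ge1$, you can indeed pick $\tilde\rho_i\ge\rho_i$ with $1/\tilde\rho_1+1/\tilde\rho_2=1$, and the rest of the argument goes through as written. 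With that inequality reversed, the proof is complete.
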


	\begin{lemma}[Convolution inequality \cite{ONeil}] ~
		Let $1<r,r_1, r_2<\infty$ and $1\leq \rho, \rho_1, \rho_2 \leq \infty$ be such that
		\[
		1+\frac{1}{r} =\frac{1}{r_1}+\frac{1}{r_2}, \quad \frac{1}{\rho}\leq \frac{1}{\rho_1}+\frac{1}{\rho_2}.
		\]
		Then 	for any $f\in L^{r_1, \rho_1}(\mathbb{R}^d)$ and $g\in L^{r_2, \rho_2}(\mathbb{R}^d)$ 
		\[
		\|f\ast g\|_{L^{r,\rho}(\mathbb{R} ^d)} \lesssim \|f\|_{L^{r_1,\rho_1}(\mathbb{R} ^d)} \|g\|_{L^{r_2,\rho_2}(\mathbb{R} ^d)}. 
		\]
	\end{lemma}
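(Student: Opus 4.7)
The plan is to establish the inequality via O'Neil's pointwise rearrangement bound for convolutions, combine it with Calder\'on's characterization of Lorentz quasi-norms through the maximal function $f^{**}$, and then apply weighted H\"older and Hardy inequalities on $(0,\infty)$.

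First, I would prove the pointwise estimate
\[
(f \ast g)^{**}(t) \;\le\; t\, f^{**}(t)\, g^{**}(t) \;+\; \int_t^\infty f^*(s)\, g^*(s)\, ds \quad \text{for all } t>0,
\]
where $h^{**}(t) := \tfrac{1}{t}\int_0^t h^*(s)\,ds$. This is obtained by splitting $f = f_1 + f_2$ and $g = g_1 + g_2$ according to their level sets at heights $f^*(t)$ and $g^*(t)$ (the bounded parts being $f_2, g_2$ and the integrable parts $f_1, g_1$), applying the trivial Young bounds $\|\phi\ast \psi\|_\infty \le \|\phi\|_1\|\psi\|_\infty$ and the identity $\int_0^t h^*(s)\,ds = \sup_{|E|=t}\int_E |h|$ to each cross term, and rearranging.

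Next, using Calder\'on's equivalence $\|h\|_{L^{r,\rho}} \sim \|s^{1/r} h^{**}(s)\|_{L^\rho((0,\infty),\,ds/s)}$, valid for $1<r<\infty$, the target inequality is reduced to a weighted $L^\rho(ds/s)$ estimate on the right-hand side above. The first term $t\,f^{**}(t) g^{**}(t) = (t^{1/r_1}f^{**}(t))(t^{1/r_2}g^{**}(t))$, where the product of weights matches $t^{1/r}$ precisely because of the scaling condition $1+1/r = 1/r_1 + 1/r_2$, is then controlled by H\"older's inequality in $L^\rho(ds/s)$ under the hypothesis $1/\rho \le 1/\rho_1 + 1/\rho_2$. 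For the tail term $\int_t^\infty f^*(s)g^*(s)\,ds$, a one-dimensional Hardy inequality on $(0,\infty)$ with weight $t^{1/r-1}$ transfers the $L^\rho(ds/s)$ norm onto the integrand $f^*(s)g^*(s)$, after which one more application of H\"older in $L^\rho(ds/s)$ and the trivial bound $f^* \le f^{**}$ closes the estimate.

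The main technical obstacle is the careful bookkeeping of weights and exponents in the Hardy--H\"older step, particularly at the endpoint cases $\rho_j = \infty$, where the norms become essential suprema and the weighted Hardy inequality degenerates. These endpoints can be handled either directly, using the monotonicity of the rearrangements together with the scaling identity, or by first establishing the weak-type $(L^{r_1,\infty}, L^{r_2,\infty}) \to L^{r,\infty}$ bound from the pointwise estimate and then upgrading to the full range of $\rho$-parameters via real interpolation (the Marcinkiewicz theorem adapted to Lorentz targets), being careful to ensure that the resulting constants depend only on the six exponents $(r,r_1,r_2,\rho,\rho_1,\rho_2)$ and the dimension $d$.
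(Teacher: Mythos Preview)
The paper does not supply a proof of this lemma; it is quoted as a classical result with a reference to O'Neil's 1963 paper. Your proposal is a faithful reconstruction of O'Neil's original argument: the pointwise rearrangement inequality $(f\ast g)^{**}(t)\le t f^{**}(t)g^{**}(t)+\int_t^\infty f^*(s)g^*(s)\,ds$, followed by the Calder\'on $f^{**}$ characterization of the Lorentz quasi-norm and weighted H\"older/Hardy inequalities on $(0,\infty)$, is precisely how the result is established in the cited reference.

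One small point on the H\"older step: when $1/\rho<1/\rho_1+1/\rho_2$ you cannot apply H\"older in $L^\rho(ds/s)$ directly with exponents $\rho_1,\rho_2$, and the embedding $L^{\rho_j}\hookrightarrow L^{q_j}$ for $q_j>\rho_j$ fails on the infinite measure space $((0,\infty),ds/s)$. The clean way around this is to first prove the inequality with the sharp second index $\tilde\rho$ satisfying $1/\tilde\rho=1/\rho_1+1/\rho_2$, and then invoke the monotonicity $L^{r,\tilde\rho}\hookrightarrow L^{r,\rho}$ of Lorentz spaces in the second exponent (which you have available since $\tilde\rho\le\rho$). This is implicit in your sketch but worth making explicit. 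Otherwise the outline is sound.
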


	Using the above convolution inequality,  one can easily obtain    Bernstein's inequality in Lorentz spaces.
	\begin{lemma}[Bernstein's inequality]\label{L:Bernstein}
		Let  $N>0$,  $1<r_1<r_2<\infty $ and  $1\le \rho_1\le\rho_2\le \infty $. Then 
		\begin{equation}
			\|P_{N}f\|_{L^{r_2,\rho_2}(\mathbb{R} ^d)}\lesssim  N^{d(\frac{1}{r_1}-\frac{1}{r_2})} \|f\|_{L^{r_1,\rho_1}(\mathbb{R} ^d)}.\notag  
		\end{equation}
	\end{lemma}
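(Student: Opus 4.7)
The plan is to reduce Bernstein's inequality to the Young-type convolution inequality for Lorentz spaces stated just above, following the same strategy used to derive the classical Bernstein inequality from Young's inequality on $L^p$. Let $\psi\in\mathcal{S}(\mathbb{R}^d)$ be a Schwartz function whose Fourier transform equals $1$ on the annulus $\{1/2\le |\xi|\le 2\}$ and vanishes outside $\{1/4\le|\xi|\le 4\}$. Setting $\psi_N(x):=N^d\psi(Nx)$, one has $\widehat{\psi_N}\equiv 1$ on the support of the symbol of $P_N$, so that $P_N f=\psi_N\ast P_N f$, and a fortiori we shall write $P_N f=\psi_N\ast f$ after replacing $\psi$ by $\tilde\psi$ with $\widehat{\tilde\psi}\equiv 1$ on $\operatorname{supp}\psi(\cdot/N)$ for every dyadic $N$ (a single Schwartz kernel works by scaling).

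Next I would choose the third exponent $q$ via the Young-type balance
\begin{equation*}
1+\frac{1}{r_2}=\frac{1}{r_1}+\frac{1}{q}, \qquad\text{i.e.}\qquad \frac{1}{q}=1-\Bigl(\frac{1}{r_1}-\frac{1}{r_2}\Bigr),
\end{equation*}
which lies strictly in $(0,1)$ since $1<r_1<r_2<\infty$, and a secondary exponent $\sigma=\infty$; the admissibility condition $\frac{1}{\rho_2}\le\frac{1}{\rho_1}+\frac{1}{\sigma}$ is then automatic because $\rho_1\le\rho_2$. The convolution inequality in Lorentz spaces yields
\begin{equation*}
\|P_N f\|_{L^{r_2,\rho_2}} \;=\; \|\psi_N\ast f\|_{L^{r_2,\rho_2}} \;\lesssim\; \|\psi_N\|_{L^{q,\infty}}\,\|f\|_{L^{r_1,\rho_1}}.
\end{equation*}
It then remains only to evaluate $\|\psi_N\|_{L^{q,\infty}}$. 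Since $\psi$ is Schwartz, $\|\psi\|_{L^{q,\infty}}<\infty$, and the standard scaling identity $\|f(N\,\cdot)\|_{L^{q,\sigma}}=N^{-d/q}\|f\|_{L^{q,\sigma}}$ (a direct consequence of the definition of the decreasing rearrangement under dilation) gives
\begin{equation*}
\|\psi_N\|_{L^{q,\infty}} \;=\; N^{d}\,N^{-d/q}\,\|\psi\|_{L^{q,\infty}} \;=\; N^{d(1-1/q)}\,\|\psi\|_{L^{q,\infty}} \;=\; N^{d(1/r_1-1/r_2)}\,\|\psi\|_{L^{q,\infty}},
\end{equation*}
where in the last step I used the defining relation for $q$. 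Combining the two displays finishes the argument.

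There is no genuine obstacle here; the only point that deserves a brief verification is the Lorentz-space dilation identity, which follows from $d_{f(N\cdot)}(\lambda)=N^{-d}d_f(\lambda)$ and hence $\bigl(f(N\cdot)\bigr)^{\!*}(s)=f^{*}(N^d s)$, after which a change of variables in the defining integral yields the claimed $N^{-d/q}$ factor uniformly in the secondary index. Everything else is bookkeeping of Hölder/Young indices, exactly as in the classical $L^p$ proof of Bernstein's inequality.
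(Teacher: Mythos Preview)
Your proof is correct and follows exactly the approach the paper intends: the paper merely states that ``using the above convolution inequality, one can easily obtain Bernstein's inequality in Lorentz spaces,'' and you have supplied precisely those details by writing $P_N f$ as convolution with a rescaled Schwartz kernel and invoking O'Neil's inequality with the third index $q$ determined by the Young relation. The only cosmetic point is that your initial paragraph is a bit roundabout; it is cleaner to take $\psi$ from the start to be the inverse Fourier transform of the Littlewood--Paley symbol, so that $P_N f = \psi_N * f$ holds directly without the intermediate detour through an auxiliary bump equal to $1$ on the annulus.
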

	Next,  in Lemma \ref{L:sobolev}--Lemma \ref{L:6141},  we  recall the Sobolev embedding, Hardy's inequality, product rule, and chain rule in Lorentz spaces. We start by introducing the following definition.
	\begin{definition}
		Let  $s\ge0,1<r<\infty $ and  $1\le\rho\le \infty $. We define the Sobolev-Lorentz spaces 
		\begin{equation}
			W^sL^{r,\rho}(\mathbb{R} ^d):=\left\{f\in \mathcal{S}'(\mathbb{R} ^d):(1-\Delta )^{s/2}f\in L^{r,\rho}(\mathbb{R} ^d) \right\},\notag
		\end{equation}  
		\begin{equation}
			\dot W^sL^{r,\rho}(\mathbb{R} ^d):=\left\{f\in \mathcal{S}'(\mathbb{R} ^d):(-\Delta )^{s/2}f\in L^{r,\rho}(\mathbb{R} ^d) \right\},\notag
		\end{equation}
		where  $\mathcal{S}'(\mathbb{R} ^d)$ is the space of tempered distributions on  $\mathbb{R} ^d$ and 
		\begin{equation}
			(1-\Delta )^{s/2}f=\mathcal{F}^{-1}\left((1+|\xi|^2)^{s/2}\mathcal{F} (f)\right),\qquad (-\Delta )^{s/2}f=\mathcal{F} ^{-1}(|\xi|^s\mathcal{F} (f))\notag
		\end{equation}  
		with  $\mathcal{F} $ and  $\mathcal{F} ^{-1}$ the Fourier and its inverse Fourier transforms respectively. The spaces  $W^sL^{r,\rho}(\mathbb{R} ^d)$ and  $\dot W^sL^{r,\rho}(\mathbb{R} ^d)$ are endowed respectively with the norms
		\begin{equation}
			\|f\|_{W^sL^{r,\rho}} = \|f\|_{L^{r,\rho}}+ \|(-\Delta )^{s/2}f\|_{L^{r,\rho}},\qquad  \|f\|_{\dot WL^{r,\rho}} = \|(-\Delta )^{s/2}f\|_{L^{r,\rho}}.\notag   
		\end{equation}    
		For simplicity, when  $s=1$ we write  $WL^{r,\rho}:=WL^{r,\rho}(\mathbb{R} ^d)$  and  $\dot WL^{r,\rho}:=\dot WL^{r,\rho}(\mathbb{R} ^d)$.  
	\end{definition}
	
	\begin{lemma}[Sobolev embedding \cite{DinhKe}]\label{L:sobolev}
		Let  $1<r<\infty ,1\le \rho\le \infty $ and  $0<s<\frac{d}{r}$. Then 
		\begin{equation}
			\|f\|_{L^{\frac{dr}{d-sr},\rho}(\mathbb{R} ^d)}\lesssim   \|(-\Delta )^{s/2}f\|_{L^{r,\rho}(\mathbb{R} ^d)} \quad\text{for any}\quad   f\in \dot W^sL^{r,\rho}(\mathbb{R} ^d).\notag
		\end{equation}  
	\end{lemma}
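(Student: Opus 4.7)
The plan is to reduce the inequality to an application of the convolution (O'Neil) inequality already stated above, by writing $f$ as a Riesz potential of $(-\Delta)^{s/2}f$. Concretely, set $g := (-\Delta)^{s/2}f$, so that by the standard Riesz representation one has
\begin{equation}
f = (-\Delta)^{-s/2} g = c_{s,d}\, |x|^{-(d-s)} * g \notag
\end{equation}
for some dimensional constant $c_{s,d}$, valid for $0<s<d$ and $f$ in a suitable class of tempered distributions (the hypothesis $f\in \dot W^s L^{r,\rho}$ with $0<s<d/r$ ensures that this makes sense and that $f$ is locally integrable after a standard approximation).

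Next I would invoke the property $\||x|^{-b}\|_{L^{d/b,\infty}} \lesssim 1$ with $b=d-s$, which gives $|x|^{-(d-s)}\in L^{d/(d-s),\infty}(\mathbb{R}^d)$. Then I would apply the convolution inequality in Lorentz spaces with exponents $r_1 = d/(d-s)$, $\rho_1 = \infty$, $r_2 = r$, $\rho_2 = \rho$, and target indices $(r^*,\rho)$ where $r^* = dr/(d-sr)$. The scaling relation
\begin{equation}
1 + \frac{1}{r^*} = \frac{d-s}{d} + \frac{1}{r} = \frac{1}{r_1} + \frac{1}{r_2} \notag
\end{equation}
is exactly what the convolution lemma demands, and the secondary-index condition $\tfrac{1}{\rho} \le \tfrac{1}{\infty} + \tfrac{1}{\rho}$ is trivial. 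The convolution inequality then yields
\begin{equation}
\|f\|_{L^{r^*,\rho}} \lesssim \||x|^{-(d-s)}\|_{L^{d/(d-s),\infty}} \, \|g\|_{L^{r,\rho}} \lesssim \|(-\Delta)^{s/2}f\|_{L^{r,\rho}}, \notag
\end{equation}
which is the desired embedding.

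The only subtle point, and the one I would treat most carefully, is the justification of the Riesz representation for a general element $f$ of $\dot W^s L^{r,\rho}$ (tempered distributions modulo polynomials): one typically restricts first to Schwartz functions, obtains the inequality by the argument above, and then extends by density, noting that the hypothesis $0<s<d/r$ guarantees $r^* < \infty$ so no issue arises at the endpoint. Apart from this density step, the argument is a direct composition of two ingredients already recorded in the preliminaries — the explicit Lorentz norm of $|x|^{-b}$ and O'Neil's convolution inequality — so I do not expect any substantial obstacle.
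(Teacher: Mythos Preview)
Your argument is correct: the Riesz potential representation $f = c_{s,d}\,|x|^{-(d-s)} * (-\Delta)^{s/2}f$ together with O'Neil's convolution inequality (Lemma~2.4 in the paper) and the fact that $|x|^{-(d-s)} \in L^{d/(d-s),\infty}$ gives the estimate immediately, and the exponent check $1 + \tfrac{1}{r^*} = \tfrac{d-s}{d} + \tfrac{1}{r}$ is exactly as you wrote. The density/approximation remark is the right caveat, and the restriction $0<s<d/r$ indeed guarantees $1<r^*<\infty$, so the convolution lemma applies as stated.

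There is nothing to compare against: the paper does not supply a proof of this lemma at all, but simply quotes it from \cite{DinhKe}. Your proof is therefore an added value --- a short, self-contained derivation from ingredients already recorded in the preliminaries (the Lorentz norm of $|x|^{-b}$ and O'Neil's inequality) --- rather than a different route to the same destination.
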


	\begin{lemma}[Hardy's inequality \cite{DinhKe}]\label{LHardy}
		 Let $1 < r < \infty$, $1 \leq \rho \leq \infty$ and $0 < s < \frac{d}{r}$. Then  
		\[
		\||x|^{-s} f\|_{L^{r,\rho}(\mathbb{R} ^d)} \lesssim  \|(-\Delta)^{s/2} f\|_{L^{r,\rho}(\mathbb{R} ^d)} \quad\text{for any}\quad   f\in \dot W^sL^{r,\rho}(\mathbb{R} ^d).
		\]
	\end{lemma}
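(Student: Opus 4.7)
The plan is to derive this Hardy-type inequality as a quick consequence of the Sobolev embedding (Lemma~\ref{L:sobolev}) combined with H\"older's inequality in Lorentz spaces, trading the decay of the multiplier $|x|^{-s}$ against the gain in integrability of $f$. First I would record that, by the properties of Lorentz spaces collected above, $|x|^{-s} \in L^{d/s,\infty}(\mathbb{R}^d)$ with a purely dimensional constant. Next, since $0 < s < d/r$, the Sobolev embedding yields
\[
\|f\|_{L^{\frac{dr}{d-sr},\rho}(\mathbb{R}^d)} \lesssim \|(-\Delta)^{s/2} f\|_{L^{r,\rho}(\mathbb{R}^d)}
\]
for every $f \in \dot W^s L^{r,\rho}(\mathbb{R}^d)$, which is precisely the gain needed to absorb the loss from $|x|^{-s}$.

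The final step is to apply H\"older's inequality in Lorentz spaces to the product $|x|^{-s} \cdot f$ with the exponent pairs $(r_1,\rho_1) = (d/s,\infty)$ and $(r_2,\rho_2) = (dr/(d-sr),\rho)$. The exponent relations
\[
\frac{1}{r_1}+\frac{1}{r_2} \;=\; \frac{s}{d}+\frac{d-sr}{dr} \;=\; \frac{1}{r}, \qquad \frac{1}{\rho_1}+\frac{1}{\rho_2} \;=\; \frac{1}{\rho} \;\geq\; \frac{1}{\rho},
\]
are satisfied, and since $s>0$ and $s<d/r$ we have $1<d/s<\infty$ and $1<dr/(d-sr)<\infty$, so the admissibility conditions of the Lorentz H\"older inequality are met. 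Chaining the three estimates then produces
\[
\||x|^{-s} f\|_{L^{r,\rho}} \;\lesssim\; \||x|^{-s}\|_{L^{d/s,\infty}}\, \|f\|_{L^{\frac{dr}{d-sr},\rho}} \;\lesssim\; \|(-\Delta)^{s/2} f\|_{L^{r,\rho}},
\]
which is the desired bound.

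No serious obstacle is anticipated: all the analytic content is already packaged in the Sobolev embedding, and the Hardy inequality here is merely its combination with the Lorentz H\"older inequality along the scaling line. The only delicate point is verifying that the secondary (Lorentz) index condition in H\"older's inequality goes through regardless of $\rho$, but the admissibility $\frac{1}{\rho} \leq \frac{1}{\infty}+\frac{1}{\rho}$ holds trivially for every $1 \leq \rho \leq \infty$, so every case of the lemma is covered by a single argument.
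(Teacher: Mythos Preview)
Your argument is correct: H\"older in Lorentz spaces with the factor $|x|^{-s}\in L^{d/s,\infty}$ followed by the Sobolev embedding of Lemma~\ref{L:sobolev} gives exactly the stated bound, and all the exponent checks you perform are valid. The paper itself does not give a proof of this lemma but simply cites \cite{DinhKe}; your derivation is the natural one from the tools already assembled in Section~\ref{S:2.2}, so there is nothing further to compare.
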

	\begin{lemma}[Product rule I \cite{Cruz}]\label{L:leibnitz}
		Let  $s\ge0,1<r,r_1,r_2,r_3,r_4,<\infty $, and   $1\le\rho,\rho_1,\rho_2,\rho_3,\rho_4\le\infty $ be such that 
		\begin{equation}
			\frac{1}{r}=\frac{1}{r_1}+\frac{1}{r_2}=\frac{1}{r_3}+\frac{1}{r_4},\qquad \frac{1}{\rho}=\frac{1}{\rho_1}+\frac{1}{\rho_2}=\frac{1}{\rho_3}+\frac{1}{\rho_4}.\notag
		\end{equation} 
		Then for any  $f\in \dot W^sL^{r_1,\rho_1}(\mathbb{R} ^d)\cap L^{r_3,\rho_3}(\mathbb{R} ^d)$  and  $g\in \dot W^sL^{r_4,\rho_4}(\mathbb{R} ^d)\cap L^{r_2,\rho_2}(\mathbb{R} ^d)$,  
		\begin{equation}
			\|(-\Delta )^{s/2}(fg)\|_{L^{r,\rho}} \lesssim   \|(-\Delta )^{s/2}f\|_{L^{r_1,\rho_1}} \|g\|_{L^{r_2,\rho_2}}+ \|f\|_{L^{r_3,\rho_3}} \|(-\Delta )^{s/2}g\|_{L^{r_4,\rho_4}}.\notag    
		\end{equation}
	\end{lemma}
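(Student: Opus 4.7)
The plan is to prove this fractional Leibniz rule by adapting the classical Kato--Ponce argument to the Lorentz setting. The starting point is a Bony-type paraproduct decomposition
$$
fg = \pi_{lh}(f,g) + \pi_{hl}(f,g) + \pi_{hh}(f,g),
$$
where, writing $P_{\ll N} := P_{\le N/8}$ for brevity,
$$
\pi_{lh} = \sum_{N} P_{\ll N} f \cdot P_N g,\quad \pi_{hl} = \sum_{N} P_N f \cdot P_{\ll N} g,\quad \pi_{hh} = \sum_{N\sim M} P_N f \cdot P_M g.
$$
Applying $(-\Delta)^{s/2}$ and using the frequency support of each Fourier convolution, I would check that $P_{\widetilde N}\bigl((-\Delta)^{s/2}\pi_{lh}\bigr)$ is essentially supported at $\widetilde N\sim N$ and behaves like $P_{\ll N} f \cdot (-\Delta)^{s/2} P_N g$, up to harmless errors that can be absorbed by fattened Littlewood--Paley projections.

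For the two off-diagonal paraproducts I would appeal to the Littlewood--Paley square function characterization
$$
\|(-\Delta)^{s/2} h\|_{L^{r,\rho}} \sim \Bigl\| \Bigl(\sum_N 2^{2Ns}|P_N h|^2\Bigr)^{1/2}\Bigr\|_{L^{r,\rho}},
$$
valid for $1<r<\infty$, $1\le\rho\le\infty$ by combining the usual Khintchine/randomization trick with real interpolation from $L^p$. Bounding $|P_{\ll N} f|$ pointwise by the Hardy--Littlewood maximal function $Mf$, pulling $Mf$ outside the square, and applying Lorentz Hölder yields
$$
\|(-\Delta)^{s/2}\pi_{lh}\|_{L^{r,\rho}} \lesssim \|Mf\|_{L^{r_3,\rho_3}} \|g\|_{\dot W^sL^{r_4,\rho_4}} \lesssim \|f\|_{L^{r_3,\rho_3}} \|g\|_{\dot W^sL^{r_4,\rho_4}},
$$
where the boundedness of $M$ on $L^{r_3,\rho_3}$ for $r_3>1$ follows via the weak-type $(1,1)$ and strong $(\infty,\infty)$ bounds together with the real interpolation definition of Lorentz spaces. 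The symmetric piece $\pi_{hl}$ furnishes the second term in the inequality.

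The diagonal piece $\pi_{hh}$ is more delicate because $P_{\widetilde N}\pi_{hh}$ picks up contributions from every $N\gtrsim \widetilde N$; one cannot simply push the derivative onto one factor. The standard remedy is to distribute the derivative as
$$
(-\Delta)^{s/2}\pi_{hh}(f,g) = \sum_{N\sim M}\bigl((-\Delta)^{s/2}P_N f\bigr) P_M g + \text{symmetric term},
$$
then to re-sum by Cauchy--Schwarz and redistribute frequency localizations via the Fefferman--Stein vector-valued maximal inequality
$$
\Bigl\|\Bigl(\sum_j |Mh_j|^2\Bigr)^{1/2}\Bigr\|_{L^{r,\rho}} \lesssim \Bigl\|\Bigl(\sum_j |h_j|^2\Bigr)^{1/2}\Bigr\|_{L^{r,\rho}}.
$$
The main obstacle in the whole proof is precisely this last inequality in the Lorentz setting, since it is not a formal consequence of its $L^p$ counterpart for a single exponent. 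I would close this gap either by Rubio de Francia extrapolation, exploiting that the scalar Fefferman--Stein bound holds uniformly in all $A_p$ weights, or by real interpolation of the vector-valued $L^p$ estimate between two exponents; this is the route taken in the reference \cite{Cruz}. Once this ingredient is in hand, Hölder in Lorentz spaces finishes the diagonal estimate and combines with the off-diagonal bounds to deliver the stated inequality.
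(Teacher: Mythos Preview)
The paper does not prove this lemma at all: it is stated as a quotation from the literature, with the citation \cite{Cruz} (Cruz-Uribe and Naibo), and no argument is given or even sketched. So there is no ``paper's own proof'' to compare against; your proposal goes well beyond what the authors do.

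That said, your outline is broadly consistent with how the result is actually proved in the cited reference. Cruz-Uribe and Naibo work in the weighted Lebesgue setting and obtain the Lorentz version via Rubio de Francia extrapolation, exactly the mechanism you identify for transferring the Fefferman--Stein vector-valued maximal inequality and the square-function characterization to $L^{r,\rho}$. One point to tighten: your treatment of the diagonal piece is written as though $(-\Delta)^{s/2}$ distributes over the product $P_N f\cdot P_M g$, which it does not. The correct bookkeeping is to apply $P_{\widetilde N}$ first, note that only $N\sim M\gtrsim \widetilde N$ contribute, use $\widetilde N^{\,s}\le N^{s}$ (here $s\ge 0$ is essential), and then run the square-function and Fefferman--Stein machinery. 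With that correction your sketch is sound, but for the purposes of this paper it suffices simply to cite \cite{Cruz}.
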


	\begin{lemma}[Chain rule \cite{Aloui}]\label{L:6141}
		Let  $s\in [0,1],F\in C^1(\mathbb{C},\mathbb{C})$ and  $1<p,p_1,p_2<\infty $,  $1\le q,q_1,q_2<\infty $ be  such that 
		\begin{equation}
			\frac{1}{p}=\frac{1}{p_1}+\frac{1}{p_2},\qquad\frac{1}{q}=\frac{1}{q_1}+\frac{1}{q_2}.\notag
		\end{equation}   
		Then
		\begin{equation}
			\|(-\Delta )^{s/2}F(f)\|_{L^{p,q}(\mathbb{R} ^d)}\lesssim  \|F'(f)\|_{L^{p_1,q_1}(\mathbb{R} ^d)} \|(-\Delta )^{s/2}f\|_{L^{p_2,q_2}(\mathbb{R} ^d)}.\notag   
		\end{equation}
	\end{lemma}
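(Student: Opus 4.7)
The plan is to deduce the Lorentz-space chain rule from a Christ--Weinstein type pointwise inequality, after which the Lorentz-space H\"older inequality and the boundedness of the Hardy--Littlewood maximal operator on Lorentz spaces do the rest. The case $s=0$ is immediate from the H\"older inequality in Lorentz spaces (already established earlier in Section 2), so attention is restricted to $s\in(0,1]$.

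For $s\in(0,1]$ I would start from the singular integral formula
\[
(-\Delta)^{s/2}F(f)(x) \;=\; c_{d,s}\,\mathrm{p.v.}\!\int_{\mathbb{R}^d}\frac{F(f(x))-F(f(y))}{|x-y|^{d+s}}\,dy,
\]
and apply the mean value theorem to factor
\[
F(f(x))-F(f(y)) \;=\; (f(x)-f(y))\int_0^1 F'\bigl(\theta f(x)+(1-\theta)f(y)\bigr)\,d\theta.
\]
After splitting the $y$-integration into dyadic annuli $|x-y|\sim 2^{-k}$ and performing the standard Christ--Weinstein averaging argument, one arrives at the pointwise bound
\[
\bigl|(-\Delta)^{s/2}F(f)(x)\bigr| \;\lesssim\; M\bigl(F'(f)\bigr)(x)\cdot M\bigl((-\Delta)^{s/2}f\bigr)(x),
\]
where $M$ denotes the Hardy--Littlewood maximal operator. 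The dyadic decomposition is what allows the two maximal functions to absorb the inner $\theta$-integral uniformly, so no more than $F\in C^1$ is required at this step.

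To conclude, I would take the $L^{p,q}$ Lorentz norm of both sides of the pointwise inequality, apply the Lorentz H\"older inequality from Section 2 with the given exponent splitting $\tfrac{1}{p}=\tfrac{1}{p_1}+\tfrac{1}{p_2}$, $\tfrac{1}{q}=\tfrac{1}{q_1}+\tfrac{1}{q_2}$, and then invoke the boundedness of $M$ on $L^{r,\rho}(\mathbb{R}^d)$ for every $1<r<\infty$, $1\leq\rho\leq\infty$---a classical consequence of the weak $(1,1)$ and strong $(\infty,\infty)$ bounds on $M$ combined with real interpolation (equivalently, a direct application of Hunt's rearrangement inequality). The main obstacle is the pointwise Christ--Weinstein estimate itself, whose verification demands careful bookkeeping of the dyadic contributions and some technical handling of the principal value; once that bound is secured, the passage from the pointwise inequality to the Lorentz-norm inequality is essentially automatic given only the H\"older and maximal inequalities already available on those spaces.
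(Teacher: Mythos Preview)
The paper does not supply its own proof of this lemma; it is imported wholesale from \cite{Aloui}. Your outline, however, contains a genuine gap at the decisive step.

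The pointwise inequality you assert,
\[
\bigl|(-\Delta)^{s/2}F(f)(x)\bigr| \;\lesssim\; M\bigl(F'(f)\bigr)(x)\cdot M\bigl((-\Delta)^{s/2}f\bigr)(x),
\]
does not follow from the steps you describe, and in general it is false. After the mean value theorem you have
\[
F(f(x))-F(f(y))=(f(x)-f(y))\int_0^1 F'\bigl(\theta f(x)+(1-\theta)f(y)\bigr)\,d\theta,
\]
but the integrand here is $F'$ evaluated on the segment $[f(y),f(x)]\subset\mathbb{C}$, not $F'\circ f$ evaluated at any point of $\mathbb{R}^d$. This factor is therefore \emph{not} dominated by $M(F'\circ f)(x)$ or $M(F'\circ f)(y)$: take, for instance, $F'$ supported near the origin while $f$ takes only the values $\pm 1$, so that $F'\circ f\equiv 0$ yet the segment integral need not vanish. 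No dyadic decomposition in the $y$-variable cures this, because the obstruction lives in the target of $f$, not in its domain.

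The actual Christ--Weinstein argument avoids this precisely by \emph{not} seeking a pointwise product bound. One telescopes $F(f)=\sum_j\bigl[F(P_{\le j}f)-F(P_{\le j-1}f)\bigr]$, applies the mean value theorem at each frequency scale so that the resulting $F'$ factor is evaluated along paths inside the range of the smooth low-frequency piece $P_{\le j}f$, and then controls everything through square-function and vector-valued maximal estimates. The conclusion is a norm inequality, not a pointwise one. To obtain the Lorentz-space version you must either verify that each ingredient of that argument (square-function characterization of $\dot W^sL^{p,q}$, vector-valued maximal inequality, etc.) persists on $L^{p,q}$, or else interpolate the known Lebesgue-space chain rule; the pointwise shortcut you propose is not available.
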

	
	Finally, we record several nonlinear estimates in Sobolev spaces that will be used in subsequent analysis.
	
		\begin{lemma}[Product rule II \cite{CCF}]\label{L-qd}
		Let $b>0$, $1<p<\frac{4}{b},$ and $0\leq s<\frac{4}{p}-b$. For sufficiently small $\epsilon>0$, we have 
		$$ \||\nabla|^s(|x|^{-b}f)\|_{L^p(\mathbb{R}^4)}\lesssim_\epsilon \Big[\||\nabla|^sf\|_{L^{q^+}(\mathbb{R}^4)}\||\nabla|^sf\|_{L^{q^-}(\mathbb{R}^4)}\Big]^{\frac12},$$
		where $\frac{1}{q^\pm}:=\frac{1}{p}- \frac{b\pm\epsilon}{4}$.
	\end{lemma}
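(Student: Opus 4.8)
The plan is to pass through the Lorentz-space refinement
\[
\big\||\nabla|^s\big(|x|^{-b}f\big)\big\|_{L^p(\mathbb R^4)}\ \lesssim\ \big\||\nabla|^s f\big\|_{L^{q_0,p}(\mathbb R^4)},\qquad \tfrac1{q_0}:=\tfrac1p-\tfrac b4,
\]
and then to deduce the stated inequality from it by real interpolation. Two elementary facts about the weight will be used: $|x|^{-b}\in L^{4/b,\infty}(\mathbb R^4)$ (with $\||x|^{-b}\|_{L^{4/b,\infty}}=|B(0,1)|^{b/4}$), and the classical identity $|\nabla|^s(|x|^{-b})=c_{b,s}\,|x|^{-(b+s)}$, which holds because $0<b<b+s<4$ (indeed $b+s<\tfrac4p\le 4$ by the hypothesis $s<\tfrac4p-b$); in particular $|\nabla|^s(|x|^{-b})\in L^{4/(b+s),\infty}(\mathbb R^4)$ with a finite norm depending only on $b,s$.

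To obtain the displayed bound I would apply the fractional product rule in Lorentz spaces (Lemma~\ref{L:leibnitz}) to the product $|x|^{-b}\cdot f$, assigning the weight to the first factor. Concretely, I take $r_1=\tfrac4{b+s}$, $\rho_1=\infty$ in the slot carrying $|\nabla|^s(|x|^{-b})$, and $r_3=\tfrac4b$, $\rho_3=\infty$ in the slot carrying $|x|^{-b}$; the Leibniz relations $\tfrac1p=\tfrac1{r_1}+\tfrac1{r_2}=\tfrac1{r_3}+\tfrac1{r_4}$ then force $\tfrac1{r_2}=\tfrac1p-\tfrac{b+s}4$ and $\tfrac1{r_4}=\tfrac1p-\tfrac b4=\tfrac1{q_0}$, while the Lorentz-index relation with $\rho_1=\rho_3=\infty$ forces $\rho_2=\rho_4=p$, so the product lands in $L^{p,p}=L^p$. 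The term coming from the $|x|^{-b}$ slot is then directly $\lesssim\||x|^{-b}\|_{L^{4/b,\infty}}\||\nabla|^sf\|_{L^{q_0,p}}$, and for the term coming from the $|\nabla|^s(|x|^{-b})$ slot I would use the Sobolev embedding in Lorentz spaces (Lemma~\ref{L:sobolev}) to bound $\|f\|_{L^{r_2,p}}\lesssim\||\nabla|^sf\|_{L^{q_0,p}}$, which is legitimate because $r_2=\tfrac{4q_0}{4-sq_0}$ and $0<s<\tfrac4{q_0}=\tfrac4p-b$ --- exactly the hypothesis on $s$. One checks routinely that under $b>0$, $1<p<4/b$, $0\le s<\tfrac4p-b$ all of $r_1=\tfrac4{b+s}$, $r_2$, $r_3=\tfrac4b$, $r_4=q_0$ lie in $(1,\infty)$ and all Lorentz indices in $[1,\infty]$, so Lemmas~\ref{L:leibnitz}--\ref{L:sobolev} apply. (When $s=0$ one has $r_2=q_0$ automatically, no Sobolev step is needed, and Lemma~\ref{L:leibnitz} reduces to Hölder's inequality in Lorentz spaces.)

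For the last step I fix $\epsilon>0$ small enough that $q^\pm\in(1,\infty)$; since $\tfrac12\big(\tfrac1{q^-}+\tfrac1{q^+}\big)=\tfrac1p-\tfrac b4=\tfrac1{q_0}$, the standard real-interpolation identification $\bigl(L^{q^-}(\mathbb R^4),L^{q^+}(\mathbb R^4)\bigr)_{1/2,p}=L^{q_0,p}(\mathbb R^4)$, together with the log-convexity of interpolation norms, gives
\[
\big\||\nabla|^sf\big\|_{L^{q_0,p}}\ \lesssim_\epsilon\ \big\||\nabla|^sf\big\|_{L^{q^+}}^{1/2}\,\big\||\nabla|^sf\big\|_{L^{q^-}}^{1/2},
\]
and combining this with the previous paragraph proves the lemma; the $\epsilon$-dependence of the constant enters only here, through the interpolation constant, which degenerates as $q^\pm\to q_0$. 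As usual I would first run the argument for Schwartz $f$, so that every norm in sight is finite and the manipulations are justified, and then pass to the general case by density.

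The step I expect to demand the most care is the Lorentz-index bookkeeping in Lemma~\ref{L:leibnitz}: because $|x|^{-b}$ and $|\nabla|^s(|x|^{-b})$ lie only in \emph{weak}-type Lorentz spaces ($\rho=\infty$), there is no summability slack in those slots, so it must all be placed on the $f$-factor. This is precisely what forces $|\nabla|^s f$ to appear in the genuine Lorentz space $L^{q_0,p}$ --- and not in $L^{q_0}$, which would be false since $q_0>p$ --- and it is the reason the final bound must be phrased with the pair $q^\pm$ and with an $\epsilon$-loss rather than as a clean $L^{q_0}$ estimate.
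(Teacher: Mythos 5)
Your argument is correct; the index bookkeeping checks out, and the reduction to the Lorentz-refined endpoint estimate followed by real interpolation is a clean way to arrive at the stated $\epsilon$-loss form.

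Since the paper imports Lemma~\ref{L-qd} from \cite{CCF} without reproducing the proof, there is no in-paper argument to match. It is still worth contrasting your route with what the $q^\pm$ formulation of the statement strongly suggests the cited proof looks like: an entirely Lebesgue-space argument in which one splits $|x|^{-b}$ into a near-origin piece lying in $L^r$ for every $r<4/b$ and a far-field piece lying in $L^r$ for every $r>4/b$, applies the ordinary fractional Leibniz rule on each, obtains an $A+B$ bound with exponents $q^-$ and $q^+$, and then extracts the geometric-mean form by rescaling $f\mapsto f(\lambda\cdot)$ and optimizing in $\lambda$ (the two terms scale as $\lambda^{\pm\epsilon}$ against the left side, since $-\tfrac{4}{q^\pm}+\tfrac{4}{p}-b=\pm\epsilon$). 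Your route instead places the weight directly in $L^{4/b,\infty}$ and $\dot W^sL^{4/(b+s),\infty}$ and invokes the Cruz--Uribe--Naibo Leibniz rule (Lemma~\ref{L:leibnitz}), which forces $\rho_2=\rho_4=p$ and lands the sharper intermediate bound $\||\nabla|^s(|x|^{-b}f)\|_{L^p}\lesssim\||\nabla|^sf\|_{L^{q_0,p}}$; the $\epsilon$-loss statement then follows from $\bigl(L^{q^-},L^{q^+}\bigr)_{1/2,p}=L^{q_0,p}$. Your version proves something strictly stronger (the honest Lorentz endpoint, unreachable by the Lebesgue-space argument) at the cost of relying on the heavier Lorentz--Leibniz machinery, whereas the \cite{CCF}-style argument is more elementary but cannot reach $q_0$ itself. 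The subsidiary points are all in order: the identity $|\nabla|^s(|x|^{-b})=c_{b,s}|x|^{-(b+s)}$ under $0<b+s<\tfrac4p\le4$, the Sobolev step $\|f\|_{L^{r_2,p}}\lesssim\||\nabla|^sf\|_{L^{q_0,p}}$ under $0<s<\tfrac4{q_0}=\tfrac4p-b$, the admissibility checks $r_1,r_2,r_3,r_4\in(1,\infty)$, and the degeneration at $s=0$ to Lorentz--H\"older. Your closing remark correctly identifies why the weak-type slots carry no summability slack and why the statement cannot be upgraded to a clean $L^{q_0}$ bound.
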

	
	\begin{lemma}[Paraproduct estimate  \cite{MiaoMurphyZheng2014}]\label{Lemmaparaproduct}
		Let $0 < s < 1$. If $1 < r < r_1 < \infty$ and $1 < r_2 < \infty$ satisfy
		\[
		\frac{1}{r_1} + \frac{1}{r_2} = \frac{1}{r} + \frac{s}{4} < 1,
		\]
		then
		\[
		\| |\nabla|^{-s}(fg) \|_{L^r(\mathbb{R}^4)} \lesssim \| |\nabla|^{-s} f \|_{L^{r_1}(\mathbb{R}^4)} \| |\nabla|^{s} g \|_{L^{r_2}(\mathbb{R}^4)}.
		\]
	\end{lemma}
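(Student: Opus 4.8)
The plan is to establish the estimate by duality, after which it reduces to the fractional Leibniz rule of Lemma~\ref{L:leibnitz} and the Sobolev embedding of Lemma~\ref{L:sobolev} (equivalently, Hardy-Littlewood-Sobolev fractional integration). By density it is enough to treat Schwartz $f$ and $g$. Put $r'=\tfrac{r}{r-1}$ and $r_1'=\tfrac{r_1}{r_1-1}$, and pair $|\nabla|^{-s}(fg)$ against an arbitrary $\phi\in L^{r'}(\mathbb{R}^4)$. Since $|\nabla|^{\pm s}$ are (formally) self-adjoint and $|\nabla|^{s}|\nabla|^{-s}=\mathrm{Id}$,
\[
\big\langle |\nabla|^{-s}(fg),\,\phi\big\rangle
=\big\langle fg,\,|\nabla|^{-s}\phi\big\rangle
=\big\langle |\nabla|^{-s}f,\ |\nabla|^{s}\!\big(g\,|\nabla|^{-s}\phi\big)\big\rangle ,
\]
so by H\"older's inequality in $x$ it suffices to show
\[
\big\||\nabla|^{s}\!\big(g\,|\nabla|^{-s}\phi\big)\big\|_{L^{r_1'}(\mathbb{R}^4)}\ \lesssim\ \big\||\nabla|^{s}g\big\|_{L^{r_2}(\mathbb{R}^4)}\,\|\phi\|_{L^{r'}(\mathbb{R}^4)},
\]
and then take the supremum over $\|\phi\|_{L^{r'}}\le 1$.

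To prove the last display I would apply the fractional Leibniz rule to the product $g\cdot|\nabla|^{-s}\phi$, which bounds its left-hand side by
\[
\big\||\nabla|^{s}g\big\|_{L^{r_2}}\,\big\||\nabla|^{-s}\phi\big\|_{L^{a_2}}
\ +\ \|g\|_{L^{a_3}}\,\big\||\nabla|^{s}|\nabla|^{-s}\phi\big\|_{L^{r'}}
\ =\ \big\||\nabla|^{s}g\big\|_{L^{r_2}}\,\big\||\nabla|^{-s}\phi\big\|_{L^{a_2}}\ +\ \|g\|_{L^{a_3}}\,\|\phi\|_{L^{r'}},
\]
where the intermediate exponents are pinned down by the H\"older relations $\tfrac1{r_2}+\tfrac1{a_2}=\tfrac1{a_3}+\tfrac1{r'}=\tfrac1{r_1'}$ together with the scaling hypothesis $\tfrac1{r_1}+\tfrac1{r_2}=\tfrac1r+\tfrac s4$; a short computation gives $\tfrac1{a_2}=\tfrac1{r'}-\tfrac s4$ and $\tfrac1{a_3}=\tfrac1{r_2}-\tfrac s4$. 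Applying then Lemma~\ref{L:sobolev} to $|\nabla|^{-s}\phi$ and to $g$ (equivalently, Hardy-Littlewood-Sobolev), exactly these two exponent identities yield
\[
\big\||\nabla|^{-s}\phi\big\|_{L^{a_2}}\lesssim\|\phi\|_{L^{r'}},\qquad
\|g\|_{L^{a_3}}\lesssim\big\||\nabla|^{s}g\big\|_{L^{r_2}},
\]
and combining the last three displays proves the claim.

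I expect the only genuine work to be the bookkeeping of exponents: one must check that $r_1'$, $r'$, $r_2$, $a_2$, $a_3$ all lie in the open interval $(1,\infty)$ so that Lemma~\ref{L:leibnitz} and Lemma~\ref{L:sobolev} apply, and this is precisely where the hypotheses enter. The condition $\tfrac1{r_1}+\tfrac1{r_2}<1$ is equivalent to $\tfrac1r<1-\tfrac s4$, which forces $\tfrac1{a_2}=\tfrac1{r'}-\tfrac s4>0$, i.e.\ $a_2<\infty$; the condition $r<r_1$ gives $\tfrac1{r_2}=\tfrac1r+\tfrac s4-\tfrac1{r_1}>\tfrac s4$, i.e.\ $a_3<\infty$; and $0<s<1<4$ ensures the fractional integration in Lemma~\ref{L:sobolev} carries a genuine gain. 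The only other point needing care is the justification of the two displayed identities for $\big\langle |\nabla|^{-s}(fg),\phi\big\rangle$ and of $|\nabla|^{s}|\nabla|^{-s}=\mathrm{Id}$, which is why I would work with Schwartz data and only pass to general $f,g$ by density at the end. (An alternative, more in the spirit of the name of the lemma, is a Littlewood-Paley paraproduct decomposition $fg=\sum_N f_N g_{<N/8}+\sum_N f_{<N/8}g_N+\sum_N f_N g_{\sim N}$, treating the two ``high-low'' sums via Bernstein and the square-function inequality for frequency-localized summands, and the ``high-high'' sum via the Hardy-Littlewood-Sobolev gain; the duality argument above simply encodes the needed almost-orthogonality more compactly.)
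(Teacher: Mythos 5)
The paper does not actually prove this lemma; it simply cites Miao--Murphy--Zheng \cite{MiaoMurphyZheng2014}, where the result is established by a direct Littlewood--Paley paraproduct decomposition of $fg$ into high--low, low--high, and high--high pieces, each handled via Bernstein and the square-function estimate. Your duality argument is therefore a genuinely different route, and it is correct. The key observation
\[
\big\langle |\nabla|^{-s}(fg),\phi\big\rangle=\big\langle |\nabla|^{-s}f,\ |\nabla|^{s}\!\big(g\,|\nabla|^{-s}\phi\big)\big\rangle
\]
converts the negative-order operator into a positive-order one acting on the auxiliary product $g\,|\nabla|^{-s}\phi$, after which the paper's Product rule~I (Lemma~\ref{L:leibnitz}, specialized to the Lebesgue scale $\rho_i=r_i$) and Sobolev embedding (Lemma~\ref{L:sobolev}) close the argument. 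Your exponent bookkeeping is exact: $\tfrac1{a_2}=\tfrac1{r'}-\tfrac s4$, $\tfrac1{a_3}=\tfrac1{r_2}-\tfrac s4$, the identity $|\nabla|^s|\nabla|^{-s}\phi=\phi$, and the Sobolev hypotheses $\tfrac s4<\tfrac1{r'}$ (from $\tfrac1r+\tfrac s4<1$) and $\tfrac s4<\tfrac1{r_2}$ (from $r<r_1$) all check out, as does the membership of $r_1',r',r_2,a_2,a_3$ in $(1,\infty)$. What your approach buys is a short, self-contained proof that reuses lemmas already recorded in Section~2 of the paper rather than re-deriving frequency-interaction estimates from scratch; what it costs is that the paraproduct analysis is hidden inside the black-box Leibniz rule rather than made explicit, so it is less illuminating about which frequency interactions drive the gain. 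Either way, the argument is valid and fills in a proof the paper leaves to its reference.
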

	\begin{lemma}\label{LVisan}
			Let $F$ be a Hölder continuous function of order $0 < \alpha < 1$. Then, for every $0 < \sigma < \alpha$, $1 < p < \infty$, and $\sigma/\alpha < s < 1$, we have
		\[
		\| |\nabla|^\sigma F(u)\|_{L^p} \lesssim \||u|^{\alpha - \sigma/s}\|_{L^{p_1}} \| |\nabla|^s u\|_{L^{(\sigma/s)p_2}}^{\sigma/s},
		\]
		provided $1/p = 1/p_1 + 1/p_2$ and $(1 - \sigma/(\alpha s))p_1 > 1$.
	\end{lemma}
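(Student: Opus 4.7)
The proof will rely on the square-function characterization of fractional Sobolev norms together with the Hölder regularity of $F$. Specifically, for $0 < \sigma < 1$, I would exploit the equivalence
\[
\||\nabla|^\sigma g\|_{L^p(\mathbb{R}^d)} \sim \left\|\left(\int_{\mathbb{R}^d} \frac{|g(x+y) - g(x)|^2}{|y|^{d+2\sigma}}\,dy\right)^{1/2}\right\|_{L^p_x}
\]
applied to $g = F(u)$.

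The key pointwise input is an interpolation between the two basic bounds available for a Hölder-continuous $F$: the direct estimate $|F(a) - F(b)| \lesssim |a-b|^\alpha$ and the crude one $|F(a) - F(b)| \lesssim (|a| + |b|)^\alpha$. Raising the first to the power $2\sigma/(\alpha s) \in (0,2)$ and the second to $2(1 - \sigma/(\alpha s))$ (both admissible since $\sigma/s < \alpha$) and multiplying yields
\[
|F(u(x+y)) - F(u(x))|^2 \lesssim |u(x+y) - u(x)|^{2\sigma/s} \bigl(|u(x)| + |u(x+y)|\bigr)^{2(\alpha - \sigma/s)}.
\]

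Plugging this into the square-function representation and splitting the symmetric factor via $(|u(x)| + |u(x+y)|)^{2(\alpha - \sigma/s)} \lesssim |u(x)|^{2(\alpha - \sigma/s)} + |u(x+y)|^{2(\alpha - \sigma/s)}$, I would then apply Hölder's inequality in $L^p_x$ with the split $\frac{1}{p} = \frac{1}{p_1} + \frac{1}{p_2}$ to separate off the factor $\||u|^{\alpha - \sigma/s}\|_{L^{p_1}}$. What remains is the pure difference-quotient factor
\[
\left\|\left(\int \frac{|u(x+y) - u(x)|^{2\sigma/s}}{|y|^{d+2\sigma}}\,dy\right)^{1/2}\right\|_{L^{p_2}_x},
\]
which I would control by $\||\nabla|^s u\|_{L^{(\sigma/s)p_2}}^{\sigma/s}$ via a Triebel--Lizorkin embedding together with Jensen's inequality, exploiting $2\sigma/s < 2$ to reduce from a non-canonical interior exponent to the canonical $L^2$ square function characterizing $|\nabla|^s u$.

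The hardest step will be the \emph{symmetric} piece, in which $|u(x+y)|^{2(\alpha - \sigma/s)}$ sits inside the $y$-integral rather than outside: here one must extract this factor using the Hardy--Littlewood maximal function before the Hölder split can be performed, and it is precisely the hypothesis $(1 - \sigma/(\alpha s))p_1 > 1$ that ensures the relevant maximal operator is bounded on the Lebesgue space one is forced into, allowing the estimate to close. The technical heart of the argument will therefore consist of combining the pointwise interpolation above with the vector-valued Fefferman--Stein maximal inequality to treat the two asymmetric contributions in a unified way.
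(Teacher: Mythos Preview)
The paper does not prove this lemma; it is simply recorded among the preliminary nonlinear estimates without argument. It is Visan's fractional chain rule for H\"older nonlinearities (see the appendix of Visan's Duke paper or the Killip--Visan Clay notes), so there is no in-paper proof to compare against. Your outline---the pointwise interpolation yielding
\[
|F(u(x+y))-F(u(x))|^2\ \lesssim\ |u(x+y)-u(x)|^{2\sigma/s}\bigl(|u(x)|+|u(x+y)|\bigr)^{2(\alpha-\sigma/s)},
\]
the H\"older split $\tfrac1p=\tfrac1{p_1}+\tfrac1{p_2}$ in $x$, and the maximal-function treatment of the piece carrying $|u(x+y)|$---does match the skeleton of the standard argument, and your reading of the hypothesis $(1-\sigma/(\alpha s))p_1>1$ as a maximal-function boundedness threshold is on target.

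There is, however, a genuine gap at the step you describe as ``a Triebel--Lizorkin embedding together with Jensen's inequality, exploiting $2\sigma/s<2$.'' After the split, the surviving difference-quotient factor is, up to the power $\sigma/s$, precisely the $\dot F^{s}_{(\sigma/s)p_2,\,2\sigma/s}$-seminorm of $u$, whereas the target $\||\nabla|^s u\|_{L^{(\sigma/s)p_2}}$ is the $\dot F^{s}_{(\sigma/s)p_2,\,2}$-seminorm. For fixed outer indices the Triebel--Lizorkin scale is monotone \emph{decreasing} in the inner index, so the inclusion you would need, $\dot F^{s}_{r,2}\hookrightarrow \dot F^{s}_{r,2\sigma/s}$, holds only when $2\sigma/s\ge 2$---exactly the opposite of what you have. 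The condition $2\sigma/s<2$ is therefore the obstruction, not the enabler; Jensen on dyadic shells likewise produces an $\ell^{\sigma/s}$ sum over scales, which \emph{dominates} rather than is dominated by the $\ell^1$ sum corresponding to $D_s u(x)^{2\sigma/s}$. The published proofs bypass this issue by working directly with the Littlewood--Paley square function, estimating $N^\sigma P_N F(u)$ pointwise via the H\"older bound applied to frequency-localized pieces of $u$, so that the problematic sub-$2$ inner exponent never appears.
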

	
	\subsection{Strichartz estimate  and stability.}
	In this subsection, we recall Strichartz estimate in the Lorentz space  and the stability results of the Cauchy probelm (\ref{INLS}).  
	\begin{definition}[Admissibility] 
		A pair $(q,r)$ is said to be Schr\"odinger admissible, for short $(q,r)\in \Lambda $, where
		\begin{equation}
			\Lambda=\left\{(q,r): 2\le q,r\le \infty,\  	\frac{2}{q}+\frac{4}{r}=2\right\}.\notag
		\end{equation}
	\end{definition}
	
	The following result is a key tool for our work. It is
	established in \cite[Theorem 10.1]{KeelTao1998AJM}.
	\begin{proposition}[Strichartz estimates]\label{P:SZ}\ \\
		\begin{itemize}
			\item Let $(q,r)\in \Lambda $ with $r<\infty$. Then for any $f\in L^2(\mathbb{R}^4)$
			\begin{align}  
				\|e^{it\Delta }f\|_{L^q_t L^{r,2}_x(\mathbb{R}\times \mathbb{R}^4)} \lesssim \|f\|_{L^2_x(\mathbb{R}^4)}.\notag
			\end{align}
			
			\item Let $(q_1, r_1), (q_2,r_2)\in \Lambda $ with $r_1, r_2<\infty$, $t_0\in \mathbb{R}$ and $I\subset \mathbb{R}$ be an interval containing $t_0$. Then 		for any $F\in L_t^{q_2'}L^{r_2',2}_x(I\times \mathbb{R}^4)$ 
			\begin{align} 
				\left\|\int_{t_0}^t e^{i(t-\tau)\Delta } F(\tau) d\tau\right\|_{L^{q_1}_tL^{r_1,2}_x(I\times \mathbb{R}^4)} \lesssim \|F\|_{L^{q_2'}_tL^{r_2',2}_x(I\times \mathbb{R}^4)}.\notag
			\end{align}
		\end{itemize}
	\end{proposition}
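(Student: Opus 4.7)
The plan is to follow the Keel-Tao framework, working directly in Lorentz spaces on the spatial variable. By the $TT^*$ method, the homogeneous estimate is equivalent to the bilinear bound
\begin{equation*}
|T(F,G)| \lesssim \|F\|_{L^{q_2'}_t L^{r_2',2}_x} \|G\|_{L^{q_1'}_t L^{r_1',2}_x},
\end{equation*}
where $T(F,G) := \iint \langle e^{i(t-s)\Delta}F(s), G(t)\rangle\, ds\, dt$. Real interpolation applied to the dispersive bound $\|e^{i\tau\Delta}\|_{L^1\to L^\infty} \lesssim |\tau|^{-2}$ and mass conservation $\|e^{i\tau\Delta}\|_{L^2\to L^2} = 1$ yields the kernel estimate
\begin{equation*}
\|e^{i(t-s)\Delta}g\|_{L^{r,2}_x} \lesssim |t-s|^{-2(1-2/r)}\|g\|_{L^{r',2}_x}, \qquad 2\le r\le\infty,
\end{equation*}
the Lorentz second index $2$ arising naturally as the real interpolation exponent between $L^1$ and $L^2$.

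In the non-double-endpoint regime, where at least one of $q_1,q_2$ is strictly greater than $2$, substituting this kernel bound into $T$ and applying Hardy-Littlewood-Sobolev in time together with Lorentz H\"older in space reduces matters to
\begin{equation*}
|T(F,G)| \lesssim \iint \frac{\|F(s)\|_{L^{r_2',2}_x}\|G(t)\|_{L^{r_1',2}_x}}{|t-s|^{2(1-1/r_1-1/r_2)}}\, ds\,dt,
\end{equation*}
and the admissibility relations $\tfrac{2}{q_i}+\tfrac{4}{r_i}=2$ force the HLS exponents to balance exactly, closing the estimate.

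The principal difficulty is the double endpoint $q_1=q_2=2$, $r_1=r_2=4$, where the time singularity becomes logarithmically non-integrable. I would resolve this by the bilinear real interpolation argument of Keel-Tao: dyadically decompose the integration in $|t-s|\sim 2^j$ to produce pieces $T_j$, establish off-diagonal bilinear estimates
\begin{equation*}
|T_j(F,G)| \lesssim 2^{-j\beta(\rho,\tilde\rho)}\|F\|_{L^{2}_t L^{\tilde\rho',2}_x}\|G\|_{L^{2}_t L^{\rho',2}_x}
\end{equation*}
for $(\rho,\tilde\rho)$ in a small neighborhood of $(4,4)$ with $\beta>0$ off the endpoint and $\beta=0$ at it, and sum the $T_j$ via a real interpolation well-adapted to the atomic decomposition of $L^{r,2}$. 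This atomic step is precisely what upgrades the spatial output from the Lebesgue norm $L^4_x$ to the sharper Lorentz norm $L^{4,2}_x$. The inhomogeneous estimate for non-double-endpoint pairs follows from the homogeneous one by the Christ-Kiselev lemma; at the double endpoint, where Christ-Kiselev fails, the same time-dyadic bilinear interpolation scheme is carried out directly on the retarded operator, yielding the full inhomogeneous bound.
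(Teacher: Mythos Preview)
The paper does not supply its own proof of this proposition; it simply invokes \cite[Theorem 10.1]{KeelTao1998AJM} as a known result. Your outline is precisely the Keel--Tao argument that the cited reference carries out, including the real-interpolation step that produces the Lorentz exponent $2$ and the bilinear dyadic treatment of the double endpoint, so your proposal is correct and coincides with the source the paper relies on.
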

	Using Strichartz estimates and standard continuous argument, we can get the classical stability theory for equation (\ref{INLS}) (see \cite{Colliander2008} for more details). 
		\begin{theorem}[Stability]\label{TStability}
		Let $1<s_c <2$, $0<b<\min \left\{ (s_c-1)^2+1,3-s_c \right\}$ and  $\alpha >0$ be such that  $s_c=2-\frac{2-b}{\alpha }$.   Suppose  $I$ is a compact time interval and $\tilde{u} : I \times \mathbb{R}^4 \to \mathbb{C}$ is an approximate solution to \eqref{INLS} in the sense that
		\[
		(i \partial_t + \Delta) \tilde{u} = F(\tilde{u}) + e
		\]
		for some function $e$. Assume that
		\[
		\|\tilde{u}\|_{L_t^\infty \dot{H}_x^{s_c}(I \times \mathbb{R}^4)} \leq E, \quad  \| \widetilde{u}\|_{L_t^{2(\alpha +1)}L_x^{\frac{4\alpha (\alpha +1)}{\alpha +2-b(\alpha +1)},2}(I\times \mathbb{R} ^4)}\leq L
		\]
		for some $E > 0$ and $L > 0$.
		Let $t_0 \in I$ and $u_0 \in \dot{H}^{s_c}(\mathbb{R}^4)$. Then there exists $\varepsilon_0 = \varepsilon_0(E, L)$ such that if
		\[
		\|u_0 - \tilde{u}(t_0)\|_{\dot{H}^{s_c}(\mathbb{R}^4)} \leq \varepsilon, \quad \||\nabla|^{s_c} e\|_{L_t^{2}L_x^{\frac{4}{3},2}(I\times \mathbb{R} ^4)} \leq \varepsilon
		\]
		for some $0 < \varepsilon < \varepsilon_0$, then there exists a solution $u : I \times \mathbb{R}^4 \to \mathbb{C}$ to \eqref{INLS} with $u(t_0) = u_0$ satisfying
		\[
		\||\nabla |^{s_c}(u-\widetilde{u})\|_{L_t^{2(\alpha +1)} L_x^{\frac{4(\alpha +1)}{2\alpha +1},2}(I\times \mathbb{R} ^4)} \lesssim_{E, L} \varepsilon.
		\]
	\end{theorem}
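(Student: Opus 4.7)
The plan is to prove Theorem \ref{TStability} via a standard partition-and-iterate scheme combined with a short-time perturbation argument, using the Lorentz-space Strichartz estimates (Proposition \ref{P:SZ}) and the fractional calculus tools from Subsection \ref{S:2.2}. Since $\|\tilde u\|_{L_t^{2(\alpha+1)} L_x^{m,2}(I\times\mathbb{R}^4)}\leq L$, one first partitions $I$ into $J=J(L,\eta)$ consecutive subintervals $I_j=[t_j,t_{j+1}]$ so that $\|\tilde u\|_{L_t^{2(\alpha+1)} L_x^{m,2}(I_j\times\mathbb{R}^4)}\leq \eta$ on each, for a parameter $\eta=\eta(E)$ to be chosen small. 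On each $I_j$, the difference $w=u-\tilde u$ solves
\begin{equation}
(i\partial_t+\Delta) w = F(\tilde u + w) - F(\tilde u) - e, \qquad F(z)=|x|^{-b}|z|^\alpha z, \notag
\end{equation}
and the goal is to control $X(I_j):=\||\nabla|^{s_c} w\|_{L_t^{2(\alpha+1)} L_x^{4(\alpha+1)/(2\alpha+1),2}(I_j\times\mathbb{R}^4)}$, which by Sobolev embedding (Lemma \ref{L:sobolev}) also controls $\|w\|_{L_t^{2(\alpha+1)} L_x^{m,2}(I_j\times\mathbb{R}^4)}$.

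On a single subinterval I would apply Duhamel's formula and Proposition \ref{P:SZ} with the admissible pair $(q_2,r_2)=(2,4)$ on the inhomogeneous side, yielding
\begin{equation}
X(I_j) \lesssim \||\nabla|^{s_c} w(t_j)\|_{L_x^2} + \bigl\||\nabla|^{s_c}[F(\tilde u+w)-F(\tilde u)]\bigr\|_{L_t^2 L_x^{4/3,2}(I_j)} + \||\nabla|^{s_c} e\|_{L_t^2 L_x^{4/3,2}(I_j)}. \notag
\end{equation}
For the nonlinear term I would write $F(\tilde u+w)-F(\tilde u) = \int_0^1 [\partial_z F(\tilde u+\theta w) w + \partial_{\bar z} F(\tilde u+\theta w) \bar w]\, d\theta$ and then distribute $|\nabla|^{s_c}$ through a combination of Lemma \ref{L-qd} (to peel off $|x|^{-b}$), the product rule Lemma \ref{L:leibnitz}, the chain rule Lemma \ref{L:6141} (or Lemma \ref{LVisan} when $s_c<\alpha$ but $s_c/\alpha$ must be accommodated), together with Hardy's inequality (Lemma \ref{LHardy}) and Sobolev embedding to land the resulting pieces back in admissible Strichartz norms. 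The hypothesis $s_c<\alpha$ (equivalent to $b<(s_c-1)^2+1$) ensures $\partial F$ is at least Lipschitz on the relevant scale, so that $|\nabla|^{s_c}$ can be applied to it without loss. The outcome is the bound
\begin{equation}
\bigl\||\nabla|^{s_c}[F(\tilde u+w)-F(\tilde u)]\bigr\|_{L_t^2 L_x^{4/3,2}(I_j)} \lesssim \bigl(\eta^\alpha + X(I_j)^\alpha\bigr) X(I_j). \notag
\end{equation}
Choosing $\eta$ small (depending on the implicit constant), a standard continuity argument starting from $X(I_0)$ small gives $X(I_j)\lesssim \||\nabla|^{s_c} w(t_j)\|_{L_x^2} + \varepsilon$ on each subinterval, together with $\||\nabla|^{s_c} w(t_{j+1})\|_{L_x^2}\lesssim \||\nabla|^{s_c} w(t_j)\|_{L_x^2}+\varepsilon$.

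Iterating this bound through the $J$ subintervals produces a geometric amplification, and setting $\varepsilon_0(E,L)$ so that $(C(E))^{J(L,\eta(E))}\varepsilon_0$ remains within the smallness threshold required by the continuity argument at every step yields the desired global estimate on $\||\nabla|^{s_c}(u-\tilde u)\|_{L_t^{2(\alpha+1)} L_x^{4(\alpha+1)/(2\alpha+1),2}(I\times\mathbb{R}^4)}$. The existence of $u$ on the entire interval $I$ is a consequence of this bound together with the local well-posedness of Theorem \ref{TLocalwellposedness}. The main obstacle is the nonlinear estimate itself: tracking the interaction of the singular weight $|x|^{-b}$, the fractional derivative $|\nabla|^{s_c}$, and the Lorentz exponents must be done with care, and is precisely where the upper bound $b<3-s_c$ enters (to keep $1/q^{\pm}$ in Lemma \ref{L-qd} within the range of Sobolev embedding), while $b<(s_c-1)^2+1$ keeps the nonlinearity smooth enough that no exotic function spaces are required. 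Once these exponent verifications are carried out, the partition-and-iterate scheme is routine and the theorem follows.
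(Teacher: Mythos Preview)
Your proposal is correct and follows essentially the same strategy as the paper: reduce to a short-time perturbation result via the standard partition-and-iterate argument (the paper simply cites \cite{Colliander2008} for this), with the only nontrivial ingredient being the nonlinear estimate you write down, which is exactly Lemma \ref{Lemmanonlinearestimate}. The paper proves that lemma by first taking one full gradient of $|x|^{-b}(F(u+v)-F(u))$ and then applying $|\nabla|^{s_c-1}$ to the three resulting pieces via the Lorentz-space Leibniz and chain rules (Lemmas \ref{L:leibnitz}, \ref{L:6141}, with Lemma \ref{LVisan} invoked only when $1<\alpha<2$ so that $F''$ is merely H\"older), rather than using Lemma \ref{L-qd} as you suggest; but both routes are viable and land on the same bound, and your identification of where the constraints $b<3-s_c$ and $b<(s_c-1)^2+1$ enter is accurate.
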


Following the standard arguments presented in \cite{Colliander2008}, to establish the stability theorem \ref{TStability}, it suffices to prove a short-time perturbation result. The proof of this short-time perturbation result, in turn, relies on the following nonlinear estimate:
	\begin{lemma}\label{Lemmanonlinearestimate}
Let  $1<s_c<2,0<b<\min \left\{ (s_c-1)^2+1,3-s_c \right\}$	and $\alpha >0$ be such that  $s_c=2-\frac{2-b}{\alpha }$. 	Let $F(u) = |u|^\alpha u$ and  $(\gamma ,\rho)=(2(\alpha +1),\frac{4(\alpha +1)}{2\alpha +1})$. Then for any $u,v \in L_t^{\gamma} \dot W^{s_c}L_x^{\rho,2}(I \times \mathbb{R}^d)$, the following estimate holds:
		\begin{align}
			& \||\nabla|^{s_c}[|x|^{-b}(F(u+v)-F(u))]\|_{L_t^2L_x^{\frac{4}{3},2}(I \times \mathbb{R}^4)} 
			\lesssim M_{s_c}(u,v), \notag
		\end{align}
		where we have set
		\[
		M_{s_c}(u,v):=\left( \||\nabla|^{s_c}u\|_{L_t^\gamma L_x^{\rho,2}(I\times\mathbb{R}^4)}^\alpha + \||\nabla|^{s_c}v\|_{L_t^\gamma L_x^{\rho,2}(I\times\mathbb{R}^4)}^\alpha \right)\||\nabla|^{s_c}v\|_{L_t^\gamma L_x^{\rho,2}(I\times\mathbb{R}^4)}.
		\]
	\end{lemma}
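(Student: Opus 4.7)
The plan is first to exploit the pointwise bound
$$
|F(u+v) - F(u)| \lesssim (|u|^\alpha + |v|^\alpha)|v|,
$$
obtained from the fundamental theorem of calculus, and then to peel off the inhomogeneous factor $|x|^{-b}$ using Lemma~\ref{L-qd} (in its Lorentz variant). Applying this lemma with $p = 4/3$ and $g = F(u+v)-F(u)$, the required hypothesis $s_c < 4/p - b = 3-b$ becomes $b < 3 - s_c$, which is part of the standing assumption of Theorem~\ref{T1}. This yields an upper bound of the form
$$
\bigl[\||\nabla|^{s_c}(F(u+v)-F(u))\|_{L^{q^+,2}_x}\,\||\nabla|^{s_c}(F(u+v)-F(u))\|_{L^{q^-,2}_x}\bigr]^{1/2}
$$
with $\tfrac{1}{q^\pm} = \tfrac{3}{4} - \tfrac{b\pm\epsilon}{4}$, reducing the problem to controlling fractional derivatives of the pure nonlinear difference in two nearby Lorentz exponents.

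Since $s_c \in (1,2)$, I would next write $|\nabla|^{s_c} = |\nabla|^{s_c-1}\nabla$ and apply the ordinary chain rule on the outer derivative:
$$
\nabla[F(u+v) - F(u)] = \bigl(F'(u+v) - F'(u)\bigr)\nabla u + F'(u+v)\,\nabla v.
$$
For the term $F'(u+v)\nabla v$, I would use the fractional Leibniz rule (Lemma~\ref{L:leibnitz}) to distribute $|\nabla|^{s_c-1}$, treating $|\nabla|^{s_c-1}F'(u+v)$ via the fractional chain rule (Lemma~\ref{L:6141}), which applies because $s_c - 1 \in (0,1)$ and $F'(\cdot) = O(|\cdot|^\alpha)$ is $C^1$. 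For the first term, the Lipschitz-type bound $|F'(z) - F'(w)| \lesssim |z-w|(|z|+|w|)^{\alpha - 1}$, valid since $\alpha > s_c > 1$, yields $|F'(u+v) - F'(u)| \lesssim |v|(|u|+|v|)^{\alpha - 1}$ pointwise, and then Leibniz together with Lemma~\ref{LVisan} applied to the Hölder composition of order $\alpha - 1$ moves the fractional derivative onto $u$ or $v$. Here the assumption $s_c < \alpha$, equivalent to $b < (s_c-1)^2 + 1$, is exactly what keeps the Hölder exponent $\alpha - 1$ strictly larger than $s_c - 1$, so that the chain rule is applicable in the form we need.

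Finally, I would balance all Lorentz and time exponents. Hölder's inequality in Lorentz spaces combined with the Sobolev embedding of Lemma~\ref{L:sobolev} lets one trade $\||\nabla|^{s_c}u\|_{L^{\rho,2}_x}$ (resp.\ for $v$) for the lower-order factor $\|u\|_{L^{r,2}_x}$ with $\tfrac{1}{r} = \tfrac{1}{\rho} - \tfrac{s_c}{4}$; applying this to the $\alpha$ copies of $u$ or $v$ produces the factor $\||\nabla|^{s_c}u\|^\alpha + \||\nabla|^{s_c}v\|^\alpha$, while the remaining derivative of $v$ contributes one factor of $\||\nabla|^{s_c}v\|$. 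A final Hölder in time distributes the $L^\gamma_t$ norms appropriately, producing the claimed $M_{s_c}(u,v)$ on the right, and a scaling count confirms that all exponents close to $L^2_t L^{4/3,2}_x$ on the left.

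The hardest part of the argument is the exponent bookkeeping. After Lemma~\ref{L-qd} one must track two slightly shifted exponents $q^\pm$, and through each application of the fractional product or chain rule the number of pieces multiplies; one has to verify case-by-case that each piece can be bounded using only $\||\nabla|^{s_c}u\|_{L^\gamma_t L^{\rho,2}_x}$ and $\||\nabla|^{s_c}v\|_{L^\gamma_t L^{\rho,2}_x}$. The structural hypothesis $s_c < \alpha$ (equivalently $b < (s_c-1)^2+1$) is precisely what keeps these chain-rule invocations in the \emph{high-power} regime of the nonlinearity; in the complementary low-power regime $b \ge (s_c-1)^2+1$, as the authors note in the remark after Theorem~\ref{T1}, a genuinely different function-space framework (in the spirit of \cite{KillipVisan2010}) would be required.
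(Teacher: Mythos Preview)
Your overall strategy is sound and would close, but it diverges from the paper's route in one structural respect: how the factor $|x|^{-b}$ is handled. You first strip off $|x|^{-b}$ via Lemma~\ref{L-qd}, paying the price of the $\pm\epsilon$ shift in Lebesgue exponents, and only then differentiate the pure nonlinear difference. The paper instead keeps $|x|^{-b}$ inside the product: it writes $|\nabla|^{s_c}=|\nabla|^{s_c-1}\nabla$, applies the ordinary product rule to $\nabla\bigl[|x|^{-b}(F(u+v)-F(u))\bigr]$ to produce three pieces $A_1,A_2,A_3$, and then estimates $\||\nabla|^{s_c-1}A_i\|_{L^2_tL^{4/3,2}_x}$ directly using the Lorentz-space Leibniz rule (Lemma~\ref{L:leibnitz}) with $|x|^{-b-k}\in L^{4/(b+k),\infty}$ as one of the factors. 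This is what the Lorentz framework buys: no $\epsilon$-perturbed exponents and no need for a Lorentz version of Lemma~\ref{L-qd} (which the paper does not state). Your route still lands correctly because $L^{4/3}\hookrightarrow L^{4/3,2}$ and $L^{\rho,2}\hookrightarrow L^{\rho}$ for $\rho>2$, so you can carry the whole argument in Lebesgue spaces after the first step; it is just heavier on bookkeeping.

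On the term $(F'(u+v)-F'(u))\nabla u$: your pointwise bound $|F'(z)-F'(w)|\lesssim|z-w|(|z|+|w|)^{\alpha-1}$ is fine for all $\alpha>1$, but be aware that after Leibniz you will still need to put $|\nabla|^{s_c-1}$ on $F'(u+v)-F'(u)=\int_0^1 F''(u+\lambda v)v\,d\lambda$, and it is at \emph{that} stage that the case split $\alpha\ge 2$ versus $1<\alpha<2$ enters, since $F''$ is only H\"older of order $\alpha-1$ in the latter range. The paper makes exactly this split, invoking Lemma~\ref{LVisan} with the observation that $\frac{s_c-1}{\alpha-1}<s_c$ (a consequence of $b<(s_c-1)^2+1$) in the low-power case. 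You cite Lemma~\ref{LVisan} for the right reason, so the idea is there; just make sure in the write-up that the split is on the regularity of $F''$, not $F'$.
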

\begin{remark}
	The assumption $b < (s_c - 1)^2 + 1$ is equivalent to $s_c < \alpha$. This condition ensures that $F'(u)$ possesses enough  regularity when applying the operator $|\nabla|^{s_c}$.
	\end{remark}
	\begin{proof}
		By direct computation, we have
		\begin{align}
			&\nabla [|x|^{-b}(F(u+v)-F(u))]\notag\\
			&= |x|^{-b-1}[F(u+v)-F(u)] + |x|^{-b}[F'(u+v)-F'(u)] \nabla u  + |x|^{-b}F'(u+v)\nabla v \notag \\
			&:= A_1(t,x) + A_2(t,x) + A_3(t,x). \notag
		\end{align}
		
		We first estimate $A_1(t,x)$.  Note that  $\rho=\frac{4(\alpha +1)}{2\alpha +1}$ satisfies 
	  $			\frac{3}{4}=\alpha (\frac{1}{\rho}-\frac{s_c}{4})+\frac{1}{\rho}+\frac{b}{4}.$
		Using Lemma \ref{L:leibnitz}, Lemma \ref{L:6141}, and Sobolev embedding, we obtain
		\begin{align}
		&	\||\nabla|^{s_c-1} A_1(t,x)\|_{L_t^2 L_x^{\frac{4}{3},2}(I \times \mathbb{R}^4)} \notag\\
			&\lesssim \|F(u+v)-F(u)\|_{L_t^2 L_x^{l_1,2}(I\times\mathbb{R}^4)}\, \||x|^{-b-s_c}\|_{L_x^{\frac{4}{b+s_c},\infty}}\notag\\
			&\quad + \||\nabla|^{s_c-1}\int _0^1F'(u+\lambda v)vd\lambda\|_{L_t^2 L_x^{l_2,2}(I\times\mathbb{R}^4)}\,\||x|^{-b-1}\|_{L_x^{\frac{4}{b+1},\infty}}\notag\\
			&\lesssim M_{s_c}(u,v),\notag
		\end{align}
		where  $\frac{1}{l_1}:=\frac{3-b-s_c}{4}=(\alpha +1)(\frac{1}{\rho}-\frac{s_c}{4})$ and  $\frac{1}{l_2}:=\frac{2-b}{4}=\alpha (\frac{1}{\rho}-\frac{s_c}{4})+\frac{1}{\rho}-\frac{1}{4}$.

		Next, we consider $A_2(t,x)$. By Lemma \ref{L:leibnitz}, Lemma \ref{L:6141}, and the Sobolev embedding again, we have
		\begin{align}
			&\||\nabla|^{s_c-1} A_2(t,x)\|_{L_t^2 L_x^{\frac{4}{3},2}(I\times\mathbb{R}^4)}\notag\\
			&\lesssim \|[F'(u+v)-F'(u)] \nabla u\|_{L_t^2 L_x^{l_3,2}(I\times\mathbb{R}^4)}\,\||x|^{-b-s_c+1}\|_{L_x^{\frac{4}{b+s_c-1},\infty}}\notag\\
			&\quad + \||\nabla|^{s_c-1}\int_0^1F''(u+\lambda v)vd\lambda\|_{L_t^{\frac{\gamma}{\alpha}}L_x^{l_4}(I\times\mathbb{R}^4)}\,\||\nabla|^{s_c}u\|_{L_t^\gamma L_x^{\rho,2}(I\times\mathbb{R}^4)}\,\||x|^{-4}\|_{L_x^{\frac{4}{b},\infty}},\label{E581}
		\end{align}
		where  $\frac{1}{l_3}:=\frac{4-b-s_c}{4}=\alpha (\frac{1}{\rho}-\frac{s_c}{4})+\frac{1}{\rho}-\frac{s_c-1}{4}$ and  $\frac{1}{l_4}:=\frac{2-b+s_c}{4}-\frac{1}{\rho}=(\alpha -1)(\frac{1}{\rho}-\frac{s_c}{4})+\frac{1}{\rho}-\frac{1}{4}$.  
		
	When $\alpha \ge 2$,   $F''(u)$ is Lipschitz continuous. Therefore, by applying Lemma \ref{L:6141},  Hölder  and Sobolev embedding to estimate the right-hand side of   (\ref{E581}), we  obtain 
	\begin{equation}
	\||\nabla|^{s_c-1} A_2(t,x)\|_{L_t^2 L_x^{\frac{4}{3},2}(I\times\mathbb{R}^4)}\lesssim  M_{s_c}(u,v).\label{E582}
\end{equation}
		  In the case $1<\alpha<2$,  $F''(u)$ is  only Hölder continuous of order $\alpha - 1$.
		  Note that 
		   \begin{equation}
		  	\frac{s_c-1}{\alpha -1}<s_c \iff b<2s_c+\frac{2}{s_c}-3,\notag
		  \end{equation}
		  and   $2s_c+\frac{2}{s_c}-3>1+(s_c-1)^2$ for $s_c\in (1,2)$. Therefore, by applying Lemma \ref{LVisan},  Hölder   and Sobolev embedding to estimate (\ref{E581}), we obtain (\ref{E582}).
		
		Similarly, by  the same argument as that used to estimae  $A_2(t,x)$, we get 
		\begin{align}
			\||\nabla|^{s_c-1} A_3(t,x)\|_{L_t^2 L_x^{\frac{4}{3},2}(I\times\mathbb{R}^4)}
			\lesssim M_{s_c}(u,v).\notag
		\end{align}
		
		Combining the above estimates, we obtain the desired estimate in Lemma \ref{Lemmanonlinearestimate}.
	\end{proof}
 
	\subsection{The construction of  scattering solutions away from the origin.}\label{s:2.4}
	In this section, as explained in Introduction, to prove Theorem \ref{TReduction}, we only need to establish a key proposition about the existence of scattering solutions to (\ref{INLS}) associated to initial data living sufficiently far from the origin.
	
	\begin{proposition}\label{Pscatteringsolution}
		Let  $\lambda_n\in (0,\infty )$,  $x_n\in \mathbb{R} ^4$, satisfy 
	 $\lim_{n \rightarrow \infty}\frac{ |x_n| }{\lambda_n}= \infty$. Let $\phi \in \dot{H}^{s_c}(\mathbb{R}^4)$. Then for sufficiently large $n$, there exists a global scattering solution $u_n(t,x)$ to equation (\ref{INLS}) such that
		\begin{equation}
			u_n(0) = \lambda_n^{s_c-2}\phi(\frac{x-x_n}{\lambda_n}) \quad \text{and} \quad \||\nabla|^{s_c} u_n\|_{L_t^{\gamma}L_x^{\rho,2}(I \times \mathbb{R}^4)} \lesssim \|\phi\|_{\dot{H}^{s_c}(\mathbb{R}^4)}, \label{E328w1}
		\end{equation}
		where  $(\gamma ,\rho)=(2(\alpha +1),\frac{4(\alpha +1)}{2\alpha +1})$. 
	\end{proposition}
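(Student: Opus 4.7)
The plan is to use the scaling symmetry of \eqref{INLS} to reduce to an equation whose inverse-power weight has been translated far from the origin, and then apply the stability theorem (Theorem~\ref{TStability}) with the $n$-independent linear Schr\"odinger flow of $\phi$ as the approximating function. Setting $v_n(t,x):=\lambda_n^{2-s_c}u_n(\lambda_n^2 t,\lambda_n x+x_n)$ and $y_n:=-x_n/\lambda_n$, a direct computation using $s_c=2-(2-b)/\alpha$ shows that $u_n$ satisfies \eqref{INLS} with the prescribed data if and only if $v_n$ solves
\begin{equation*}
 i\partial_t v_n+\Delta v_n=|x-y_n|^{-b}|v_n|^\alpha v_n,\qquad v_n(0)=\phi,
\end{equation*}
and the critical norm $\||\nabla|^{s_c}u_n\|_{L_t^\gamma L_x^{\rho,2}}$ coincides with $\||\nabla|^{s_c}v_n\|_{L_t^\gamma L_x^{\rho,2}}$. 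Since $|y_n|=|x_n|/\lambda_n\to\infty$, the weight $|x-y_n|^{-b}$ tends to zero pointwise on any bounded set, and the equation should behave, in the limit, like the free Schr\"odinger equation with data $\phi$.

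Accordingly, I take $\widetilde v_n(t,x):=e^{it\Delta}\phi(x)$ as the approximating function. Proposition~\ref{P:SZ} gives $\||\nabla|^{s_c}\widetilde v_n\|_{L_t^\gamma L_x^{\rho,2}}\lesssim\|\phi\|_{\dot H^{s_c}}$, and the error in $v_n$'s equation is $e_n:=-|x-y_n|^{-b}|\widetilde v_n|^\alpha\widetilde v_n$. Because $v_n(0)=\widetilde v_n(0)$, Theorem~\ref{TStability} will produce $v_n$ with $\||\nabla|^{s_c}v_n\|_{L_t^\gamma L_x^{\rho,2}}\lesssim\|\phi\|_{\dot H^{s_c}}$ (and, after undoing the rescaling, $u_n$ satisfying \eqref{E328w1}) as soon as I establish
\begin{equation*}
 \||\nabla|^{s_c}e_n\|_{L_t^2 L_x^{4/3,2}(\mathbb R\times\mathbb R^4)}\longrightarrow 0 \quad\text{as } n\to\infty.
\end{equation*}

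I plan to obtain this estimate by a density argument. Fix $\epsilon>0$, pick $\phi^\epsilon\in C_c^\infty(\mathbb R^4\setminus\{0\})$ with $\|\phi-\phi^\epsilon\|_{\dot H^{s_c}}<\epsilon$, and set $\widetilde v_n^\epsilon:=e^{it\Delta}\phi^\epsilon$. Write $e_n=A_n+B_n$ where $A_n:=-|x-y_n|^{-b}\bigl(|\widetilde v_n|^\alpha\widetilde v_n-|\widetilde v_n^\epsilon|^\alpha\widetilde v_n^\epsilon\bigr)$ is the \emph{difference} piece and $B_n:=-|x-y_n|^{-b}|\widetilde v_n^\epsilon|^\alpha\widetilde v_n^\epsilon$ is the \emph{smooth} piece. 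For $A_n$ I translate $x\mapsto x+y_n$ to restore the weight $|x|^{-b}$ (all relevant Lorentz and Sobolev norms are translation invariant) and invoke the nonlinear estimate of Lemma~\ref{Lemmanonlinearestimate} applied to the difference of nonlinearities, obtaining a bound of $O(\epsilon)$ uniformly in $n$. For $B_n$, the function $\widetilde v_n^\epsilon$ is Schwartz in $x$ with dispersive decay in $t$; splitting $\mathbb R^4=\{|x|\le |y_n|/2\}\cup\{|x|>|y_n|/2\}$ exploits two competing smallnesses ($|x-y_n|^{-b}\lesssim|y_n|^{-b}\to 0$ on the inner region, decay of $\widetilde v_n^\epsilon$ and its derivatives on the outer region), and feeding this split into the product and chain rules of Lemmas~\ref{L-qd}, \ref{L:leibnitz}, and \ref{L:6141} shows $\||\nabla|^{s_c}B_n\|_{L_t^2 L_x^{4/3,2}}\to 0$ for each fixed $\epsilon$. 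Sending $\epsilon\to 0$ afterwards closes the argument. The main obstacle is the smooth piece: because $\||\nabla|^{s_c}|x-y_n|^{-b}\|_{L^{4/(b+s_c),\infty}}$ is $y_n$-independent by translation invariance, one cannot simply pull a decaying factor $|y_n|^{-b}$ out of a blind product-rule bound, so the spatial split above (or, equivalently, a dominated-convergence argument in Lorentz spaces leveraging the Schwartz decay of $\widetilde v_n^\epsilon$) is essential to encode the cancellation between the singularity of the weight and the effective support of the linear flow.
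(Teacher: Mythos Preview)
Your plan coincides with the paper's: reduce via the scaling symmetry, take the free evolution $e^{it\Delta}\phi$ as the approximate solution, and invoke Theorem~\ref{TStability} once the error is shown to vanish in $L_t^2\dot W^{s_c}L_x^{4/3,2}$. The only difference is in how the density step is executed. You approximate the \emph{datum} $\phi$ by $\phi^\epsilon\in C_c^\infty(\mathbb R^4)$ and then flow, which leaves the smooth piece $B_n$ spread over all of spacetime; to make your spatial split rigorous you will also need a time cutoff (the large-$|t|$ contribution handled by the Strichartz tail of $e^{it\Delta}\phi^\epsilon$, which your outline omits), and the interaction of the nonlocal $|\nabla|^{s_c}$ with a spatial cutoff must be softened by smooth truncations before the product and chain rules apply. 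The paper sidesteps all of this by approximating the \emph{spacetime} function $e^{it\Delta}\phi$ directly by some $\psi\in C_c^\infty(\mathbb R\times\mathbb R^4)$ in the norm $L_t^\gamma\dot W^{s_c}L_x^{\rho,2}$: the resulting smooth piece $|x|^{-b}|\psi(t,x-x_n)|^\alpha\psi(t,x-x_n)$ is then a compactly supported $C^\infty$ function whose support, for large $n$, avoids the origin entirely, so its $L_t^2\dot W^{s_c}L_x^{4/3,2}$-norm tends to zero by an elementary pointwise argument with no splitting and no commutators. Both routes are correct; the paper's is shorter and cleaner precisely at the point you identify as the main obstacle.
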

	
	\begin{proof}
		By scalling, it sufficies to consider the case  $\lambda_n\equiv1$. Define $v_n(t,x) := e^{it\Delta} \phi(x - x_n)$. Then $v_n(t,x)$ solves the perturbed equation
		\begin{equation}
			i\partial_t v_n(t,x) + \Delta v_n(t,x) = |x|^{-b} |v_n(t,x)|^{\alpha} v_n(t,x) +e_n(t,x), \notag
		\end{equation}
		where  $e_n(t,x):=- |x|^{-b} |v_n(t,x)|^{\alpha} v_n(t,x) $. 
		
		By Strichartz, it is easy to see that  $e_n(t,x)$ is bounded in  $L_t^{\infty }\dot H^{s_c}_x(I\times \mathbb{R} ^4)\cap L_t^{\gamma }\dot W^{s_c}L_x^{\rho,2}(I\times \mathbb{R} ^4)$. To apply the stability theorem \ref{TStability}, it suffices to show that
		\begin{equation}
			\lim_{n \rightarrow \infty} \||\nabla|^{s_c}e_n(t,x)\|_{L_t^2 L_x^{\frac{4}{3},2}(I \times \mathbb{R}^4)} = 0. \label{E3283}
		\end{equation}
		Once this is established, the existence of the desired solution $u_n(t,x)$ satisfying (\ref{E328w1}) follows directly from the stability theorem \ref{TStability}.
		
		We now proceed to prove estimate (\ref{E3283}). For any $\varepsilon > 0$, we first choose a function $\psi(t,x) \in C_c^\infty(\mathbb{R} \times \mathbb{R}^4)$ such that
		\begin{equation}
			\||\nabla|^{s_c}[\psi(t,x) - e^{it\Delta} \phi(x)]\|_{L_t^\gamma L_x^{\rho,2}(I \times \mathbb{R}^4)} < \varepsilon. \notag
		\end{equation}
		
		Fix $T > 0$. We first estimate $e_n$ on the time interval $[-T, T]$.  Let 
		\begin{equation}
			\Phi_n(t,x) := |x|^{-b} |\psi(t,x-x_n)|^\alpha \psi(t,x-x_n). \notag
		\end{equation}
		Then, by Hölder, Strichartz and Sobolev embedding, we  have   
		\begin{align}
			&\||\nabla|^{s_c}[e_n(t,x) - \Phi_n(t,x)]\|_{L_t^2 L_x^{\frac{4}{3},2}( [-T, T] \times \mathbb{R}^4)} \notag \\
			&\lesssim \left( \||\nabla|^{s_c} e^{it\Delta} \phi\|_{L_t^\gamma L_x^{\rho,2}}^\alpha + \||\nabla|^{s_c} \psi(t,x)\|_{L_t^\gamma L_x^{\rho,2}}^\alpha \right) \||\nabla|^{s_c}[\psi(t,x) - e^{it\Delta} \phi(x)]\|_{L_t^\gamma L_x^{\rho,2}} 
			\lesssim \varepsilon.\label{E583}
		\end{align}
		On the other hand, since the function $\psi(t,x)$ has compact support in space and $|x_n|\rightarrow\infty$ as $n\rightarrow\infty$, it follows that
		\begin{equation}
			\lim_{n\rightarrow\infty} \||\nabla|^{s_c}\Phi_n(t,x)\|_{L_t^{2}L_x^{\frac{4}{3},2}([-T,T]\times \mathbb{R}^4)} = 0.\label{E584}
		\end{equation}
		Combining  (\ref{E583}) and (\ref{E584}), we obtain 
		\begin{equation}
			\limsup _{n\rightarrow\infty }\||\nabla|^{s_c}e_n(t,x)\|_{L_t^2 L_x^{\frac{4}{3},2}([-T,T]\times \mathbb{R}^4)} \lesssim \varepsilon.  \label{E328w2}
		\end{equation}

		We now estimate $e_n$ on the set $\{t : |t| > T\}$. Again by Hölder’s inequality, we have
		\begin{equation}
			\||\nabla|^{s_c} e_n\|_{L_t^2 L_x^{\frac{4}{3},2}(\{|t| > T\} \times \mathbb{R}^4)} \lesssim \||\nabla|^{s_c}e^{it\Delta} \phi\|_{L_t^\gamma L_x^{\rho,2}(\{|t| > T\} \times \mathbb{R}^4)}^{\alpha + 1}, \label{E585}
		\end{equation}
		which tends to zero as $T \to \infty$, by the Strichartz estimate and the monotone convergence theorem. 
		The desired estimate (\ref{E3283}) then follows from  
		 (\ref{E328w2})  and (\ref{E585}). This  completes the proof of Proposition \ref{Pscatteringsolution}.
	\end{proof}

	\section{The case  $1<s_c<\frac{3}{2}$}\label{S4}
	In this section, we rule out the existence of almost periodic solutions as described in Theorem \ref{TReduction} for the case $1 < s_c < 3/2$. The argument relies on a space-localized Morawetz inequality, similar to the method employed by Bourgain \cite{Bourgain1999} in the study of the radial energy-critical NLS.
	\begin{lemma}[Morawetz inequality]\label{L1091}
		Let $1 < s_c < \frac{3}{2}$, and let $u$ be a solution to (\ref{INLS}) on the time interval $I$. Then, for any $A \geq 1$, the following estimate holds:
		\begin{equation}
			\int_I \int_{|x| \leq A |I|^{1/2}} \frac{|u(t,x)|^{\alpha + 2}}{|x|^{1+b}} \, dx \, dt \lesssim (A|I|^{\frac{1}{2}})^{2s_c-1} \left\{ \|u\|_{L_t^{\infty} \dot H_x^{s_c}(I \times \mathbb{R}^4)}^2 + \|u\|_{L_t^{\infty} \dot H_x^{s_c}(I \times \mathbb{R}^4)}^{\alpha + 2} \right\}. \label{E1092}
		\end{equation}
	\end{lemma}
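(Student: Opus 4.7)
The plan is to apply a spatially localized Morawetz identity with radial weight $a(x) = \phi(|x|/R)|x|$, where $R = A|I|^{1/2}$ and $\phi\in C_c^\infty([0,\infty))$ is a smooth cutoff with $\phi\equiv 1$ on $[0,1]$ and $\phi\equiv 0$ on $[2,\infty)$. Setting $M_a(t) := 2\operatorname{Im}\int_{\mathbb{R}^4}\bar u\,\nabla a\cdot\nabla u\,dx$, the standard integration-by-parts derivation for $iu_t+\Delta u = |x|^{-b}|u|^\alpha u$ yields
\begin{equation}
\frac{d M_a}{dt} = \int \bigl(4\,a_{jk}\operatorname{Re}(\partial_j\bar u\,\partial_k u) - \Delta^2 a\,|u|^2\bigr)dx + \tfrac{2\alpha}{\alpha+2}\int(\Delta a)|x|^{-b}|u|^{\alpha+2}dx - \tfrac{2}{\alpha+2}\int (\nabla a\cdot\nabla|x|^{-b})|u|^{\alpha+2}dx. \notag
\end{equation}

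On the inner region $\{|x|\le R\}$ where $a\equiv|x|$, the four-dimensional identities $\Delta|x|=3/|x|$, $-\Delta^2|x|=3/|x|^3$, together with the fact that $a_{jk} = \tfrac{1}{|x|}(\delta_{jk}-x_jx_k/|x|^2)$ is a non-negative symmetric matrix and $\nabla a\cdot\nabla|x|^{-b} = -b|x|^{-b-1}$, show that the integrand is pointwise non-negative and bounded below by $\tfrac{6\alpha+2b}{\alpha+2}\cdot|u|^{\alpha+2}/|x|^{1+b}$. In the annular region $\{R\le |x|\le 2R\}$, the cutoff derivatives $\phi',\phi''$ produce error contributions dominated pointwise in time by $R^{-3}\int_{|x|\sim R}|u|^2 dx + R^{-1}\int_{|x|\sim R}|\nabla u|^2 dx + R^{-1-b}\int_{|x|\sim R}|u|^{\alpha+2} dx$. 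Applying H\"older on the annulus together with the Sobolev embeddings $\dot H^{s_c}(\mathbb{R}^4)\hookrightarrow L^{4/(2-s_c)}$ and $\dot H^{s_c-1}(\mathbb{R}^4)\hookrightarrow L^{4/(3-s_c)}$, and using the algebraic identity $(\alpha+2)(2-s_c)+b = 6-2s_c$, each of these three quantities is bounded by $R^{2s_c-3}(\|u\|_{\dot H^{s_c}}^2 + \|u\|_{\dot H^{s_c}}^{\alpha+2})$.

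For the boundary estimate, since $|\nabla a|\lesssim 1$ and $\nabla a$ is supported in $\{|x|\le 2R\}$, Cauchy--Schwarz followed by the same localized Sobolev embeddings gives
\begin{equation}
\sup_{t\in I}|M_a(t)|\lesssim \|u\|_{L^2(|x|\le 2R)}\|\nabla u\|_{L^2(|x|\le 2R)}\lesssim R^{s_c}\cdot R^{s_c-1}\,\|u\|_{L^\infty_t\dot H^{s_c}_x}^2 = R^{2s_c-1}\|u\|_{L^\infty_t\dot H^{s_c}_x}^2.\notag
\end{equation}
Integrating the Morawetz identity over $t\in I$ and observing that $|I|\cdot R^{2s_c-3} = A^{-2}R^{2s_c-1} \le R^{2s_c-1}$ for $A\ge 1$, the coercive quantity $\int_I\int_{|x|\le R}|u|^{\alpha+2}/|x|^{1+b}\,dx\,dt$ is bounded above by a constant times $(A|I|^{1/2})^{2s_c-1}(\|u\|_{L^\infty_t\dot H^{s_c}_x}^2 + \|u\|_{L^\infty_t\dot H^{s_c}_x}^{\alpha+2})$, which is precisely \eqref{E1092}.

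The main obstacle is the bookkeeping for the error terms from the transition annulus, particularly the biharmonic contribution $\Delta^2 a\cdot|u|^2$: in the energy-supercritical regime $s_c>1$ no global $L^2$ control on $u$ is available, so one must exploit both the explicit $|x|\sim R$ localization and localized H\"older--Sobolev estimates to convert all error contributions into $\dot H^{s_c}$ norms. The choice $R=A|I|^{1/2}$ is precisely the threshold that makes each error a constant (in $A\ge 1$) multiple of the main boundary contribution $R^{2s_c-1}\|u\|_{\dot H^{s_c}}^2$.
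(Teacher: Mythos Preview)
Your proposal is correct and follows essentially the same approach as the paper: both use the truncated weight $a(x)=|x|\phi(x/R)$, derive the localized Morawetz identity, exploit the positivity on $\{|x|\le R\}$, bound the annular errors and the functional $M_a$ via H\"older--Sobolev in $\dot H^{s_c}$, and then set $R=A|I|^{1/2}$. The only discrepancies are cosmetic (the paper bounds $|M(t)|$ via a three-factor H\"older inequality rather than your Cauchy--Schwarz plus localized embedding) and a minor constant: the coefficient in front of the $\nabla a\cdot\nabla|x|^{-b}$ term should be $-\tfrac{4}{\alpha+2}$ rather than $-\tfrac{2}{\alpha+2}$, giving the coercive constant $\tfrac{6\alpha+4b}{\alpha+2}$ instead of your $\tfrac{6\alpha+2b}{\alpha+2}$, but this does not affect the argument.
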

	\begin{proof}
		Let $\phi(x)$ be a smooth, radial bump function such that $\phi(x) = 1$ for $|x| \leq 1$ and $\phi(x) = 0$ for $|x| > 2$. Let  $R>1$ and $a(x) := |x| \phi\left(\frac{x}{R}\right)$. 
		
		Define the Morawetz potential as
		\begin{equation}
			M(t) := 2\text{Im} \int \partial_j a(x) \overline{u} (t,x)\partial_j u(t,x) \, dx, \notag
		\end{equation}
		where we sum the repeated indices. Using equation (\ref{INLS}), direct computation gives
		\begin{align}
			\frac{d}{dt}M(t) &= 4\text{Re} \int \partial_j \partial_k a \partial_j \overline{u} \partial_k u \, dx - \int |u|^2 \Delta^2 a \, dx \notag \\
			&\qquad + \frac{2\alpha}{\alpha + 2} \int |x|^{-b} |u|^{\alpha + 2} \Delta a \, dx + \frac{4b}{\alpha + 2} \int |x|^{-b-2} |u|^{\alpha + 2} x_j \partial_j a \, dx. \notag
		\end{align}
		
		For $|x| \leq R$, note that $\partial_j \partial_k a$ is positive definite, and $\Delta a(x) = 3/|x|$. This implies
		\begin{align}
			\frac{d}{dt}M(t) &\geq \frac{6\alpha + 4b}{\alpha + 2} \int_{|x| <R} |x|^{-b-1} |u|^{\alpha + 2} \, dx+4\text{Re} \int_{|x| > R} \partial_j \partial_k a\partial_j \overline{u} \partial_k u \, dx \notag\\
			&\quad- \int_{|x| > R} |u|^2 \Delta^2 a \, dx + \frac{2\alpha }{\alpha + 2} \int_{|x| > R} |x|^{-b} |u|^{\alpha + 2} \Delta a \, dx \notag\\
			&\quad+ \frac{4b}{\alpha + 2} \int_{|x| > R} |x|^{-b-2} |u|^{\alpha + 2} x_j \partial_j a \, dx. \label{E12220}
		\end{align}
		
		Using Hölder's inequality and Sobolev embedding, we estimate
		\begin{align}
			&\int_{|x| > R} |\partial_j \partial_k a \partial_j \overline{u} \partial_k u| \, dx + \int_{|x| > R} |u|^2 |\Delta^2 a| \, dx \notag \\
			&\lesssim \|\Delta a\|_{L_x^{\frac{2}{s_c-1}}(|x| > R)} \|\nabla u\|_{L_x^{\frac{4}{3-s_c}}(\mathbb{R}^4)}^2 + \|u\|_{L_x^{\frac{4}{2-s_c}}(\mathbb{R}^4)}^2 \|\Delta^2 a\|_{L_x^{\frac{2}{s_c}}(\mathbb{R}^4)} \notag \\
			&\lesssim R^{2s_c-3} \|u\|_{\dot H^{s_c}(\mathbb{R}^4)}^2, \label{E12221}
		\end{align}
		and
		\begin{align}
			&\int_{|x| > R} |x|^{-b} |u|^{\alpha + 2} |\Delta a| \, dx + \int_{|x| > R} |x|^{-b-2} |u|^{\alpha + 2} |x_j \partial_j a| \, dx \notag \\
			&\lesssim R^{-b} \|\Delta a\|_{L_x^{\frac{4}{2(s_c-1)+b}}(|x| > R)} \|u\|_{L_x^{\frac{4}{2-s_c}}(\mathbb{R}^4)}^{\alpha + 2} + R^{-b-1} \|\nabla a\|_{L_x^{\frac{4}{2(s_c-1)+b}}(|x|>R)} \|u\|_{L_x^{\frac{4}{2-s_c}}(\mathbb{R}^4)}^{\alpha + 2} \notag \\
			&\lesssim R^{2s_c-3} \|u\|_{L_x^{\frac{4}{2-s_c}}(\mathbb{R}^4)}^{\alpha + 2}\lesssim R^{2s_c-3}\|u\|_{\dot H^{s_c}(\mathbb{R} ^4)}^{\alpha +2}. \label{E12222}
		\end{align}
		
		On the other hand, Hölder's inequality together with Sobolev embedding yields the following bound for $M(t)$:
		\begin{equation}
			|M(t)| \lesssim \|u\|_{L_x^{\frac{4}{2-s_c}}(\mathbb{R}^4)} \|\nabla u\|_{L_x^{\frac{4}{3-s_c}}(\mathbb{R}^4)} \|\nabla a\|_{L_x^{\frac{4}{2s_c-1}}(\mathbb{R}^4)} \lesssim R^{2s_c-1} \|u\|_{\dot H^{s_c}(\mathbb{R} ^4)}^2. \label{E12223}
		\end{equation}
		Substituting (\ref{E12221})--(\ref{E12223}) into (\ref{E12220}) and integrating over the time interval $I$, we obtain
		\begin{equation}
			\int _I\int_{|x| \leq R} \frac{|u(t,x)|^{\alpha + 2}}{|x|^{1+b}} \, dx \, dt \lesssim R^{2s_c-1} \|u\|_{L_t^{\infty} \dot H_x^{s_c}(I \times \mathbb{R}^4)}^2 + R^{2s_c-3} |I| \|u\|_{L_t^{\infty} \dot H_x^{s_c}(I \times \mathbb{R}^4)}^{\alpha + 2}. \notag
		\end{equation}
		Taking $R = A |I|^{\frac{1}{2}}$ gives the desired estimate (\ref{E1092}).
	\end{proof}
	
	Now, we use the spatially localized Morawetz inequality (Lemma \ref{L1091}) to rule out the existence of the  almost periodic solutions in Theorem \ref{TReduction} with $1<s_c<\frac{3}{2}$.   
	
	\begin{theorem}\label{T1-3}
		Let  $1<s_c<\frac{3}{2}$. There are no global radial almost periodic solutions in Theorem \ref{TReduction} satisfying $T_{\text{max}} = +\infty$.
	\end{theorem}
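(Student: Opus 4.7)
The plan is to argue by contradiction: assume $u:[0,\infty)\times\mathbb{R}^4\to\mathbb{C}$ is such an almost periodic solution, so by Theorem \ref{TReduction} we have $N(t)\ge 1$ for all $t$, and the rescaled family $v(t,y):=N(t)^{s_c-2}u(t,y/N(t))$ lies in a precompact subset $K\subset\dot H^{s_c}(\mathbb{R}^4)$. I will combine the space-localized Morawetz estimate of Lemma \ref{L1091} (as an upper bound) with a lower bound on $\int|x|^{-1-b}|u(t,x)|^{\alpha+2}\,dx$ extracted from almost periodicity.

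The first step is a pointwise-in-time lower bound
\[
\int_{\mathbb{R}^4}\frac{|u(t,x)|^{\alpha+2}}{|x|^{1+b}}\,dx\;\gtrsim_u\;N(t)^{3-2s_c}.
\]
Using the scaling identity $\alpha(s_c-2)=b-2$, the change of variables $x=y/N(t)$ gives
\[
\int\frac{|u(t,x)|^{\alpha+2}}{|x|^{1+b}}\,dx=N(t)^{3-2s_c}\,\Phi(v(t)),\qquad \Phi(w):=\int\frac{|w(y)|^{\alpha+2}}{|y|^{1+b}}\,dy,
\]
so it suffices to show $\inf_{t\ge0}\Phi(v(t))>0$. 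The functional $\Phi:\dot H^{s_c}\to[0,\infty]$ is nonnegative and lower semi-continuous (via Sobolev embedding $\dot H^{s_c}\hookrightarrow L^{4/(2-s_c)}$, passing to a.e.\ convergent subsequences, and Fatou), and vanishes only at $w\equiv 0$. Since $K$ is precompact, the infimum is positive provided $0\notin\overline K$; but this is ruled out by the blow-up hypothesis together with small-data scattering from Theorem \ref{TLocalwellposedness}(4): if $v(t_n)\to 0$ in $\dot H^{s_c}$, then $\|u(t_n)\|_{\dot H^{s_c}}\to 0$ and the solution would scatter forward from $t_n$, making $S_{[0,T_{\max})}(u)$ finite, a contradiction.

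To feed this into Lemma \ref{L1091}, I use \eqref{Ecompact} (combined with a short Bernstein argument exploiting the uniform frequency localization of $K$) to conclude that the spatial tails $\int_{|y|>R}|y|^{-1-b}|v(t,y)|^{\alpha+2}\,dy$ vanish uniformly on $K$ as $R\to\infty$. Hence there exists $C_0=C_0(u)>0$ with $\int_{|y|\le C_0}|y|^{-1-b}|v(t,y)|^{\alpha+2}\,dy\ge\frac12\inf_K\Phi$ for every $t$. Rescaling and using $N(t)\ge 1$, so that $C_0/N(t)\le C_0$, yields
\[
\int_{|x|\le C_0}\frac{|u(t,x)|^{\alpha+2}}{|x|^{1+b}}\,dx\;\gtrsim_u\;N(t)^{3-2s_c}.
\]
Applying Lemma \ref{L1091} with $A=1$ on $I=[0,T]$ for any $T\ge C_0^2$ (so $T^{1/2}\ge C_0$) and integrating the lower bound in $t$, one obtains
\[
\int_0^T N(t)^{3-2s_c}\,dt\;\lesssim_u\;T^{(2s_c-1)/2}.
\]
Since $3-2s_c>0$ and $N(t)\ge 1$, the left-hand side is bounded below by $T$, while $(2s_c-1)/2<1$ makes the right-hand side sublinear in $T$; letting $T\to\infty$ produces the desired contradiction.

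The main obstacle is the strict lower bound in the first step: \emph{a priori}, $\Phi$ need not be finite or continuous on all of $\dot H^{s_c}(\mathbb{R}^4)$ because the weight $|y|^{-1-b}$ is borderline relative to the Sobolev embedding when $s_c<3/2$. One must therefore combine the lower semi-continuity of $\Phi$ with small-data theory to exclude $0\in\overline K$, and use the frequency and spatial localization from \eqref{Ecompact} to reduce the lower bound to a fixed ball $|x|\le C_0$ compatible with the Morawetz truncation.
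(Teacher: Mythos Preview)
Your argument is correct and reaches the same contradiction as the paper, but the route to the lower bound is genuinely different. The paper does not work with the functional $\Phi(w)=\int|y|^{-1-b}|w|^{\alpha+2}\,dy$ directly; instead it first records an $L^2$ mass concentration estimate
\[
\inf_{t\ge 0} N(t)^{2s_c}\int_{|x|\le C(u)/N(t)}|u(t,x)|^2\,dx\ \gtrsim_u\ 1
\]
(borrowed from \cite[(7.4)]{MiaoMurphyZheng2014}), then partitions $I$ into characteristic subintervals $J_k$ with $N(t)\sim N_k$ and applies H\"older on each $J_k$ to pass from $|u|^2$ to $|u|^{\alpha+2}$, summing to obtain $\int_I N(t)^{3-2s_c}\,dt\lesssim_u|I|^{s_c-1/2}$. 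Your approach bypasses both the mass bound and the subinterval decomposition by rescaling $u(t)$ to $v(t)\in K$ and invoking lower semicontinuity of $\Phi$ on the compact set $\overline K\subset\dot H^{s_c}$ together with $0\notin\overline K$. This is cleaner and yields the pointwise-in-time lower bound in one stroke. One small simplification: you do not need \eqref{Ecompact} or Bernstein to localize the tails; since $s_c<\tfrac32$, the H\"older pairing $|y|^{-1-b}\in L^{4/(2s_c+b-2)}(|y|>R)$ against $|v|^{\alpha+2}\in L^{4/(6-b-2s_c)}$ already gives $\int_{|y|>R}|y|^{-1-b}|v|^{\alpha+2}\,dy\lesssim R^{2s_c-3}\|v\|_{\dot H^{s_c}}^{\alpha+2}\to 0$ uniformly on the bounded set $K$.
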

	\begin{proof}
		We argue by contradiction. Assume that $u$ is a  almost periodic solution in Theorem \ref{TReduction} and satisfies $T_{\text{max}} = +\infty$.   
		For the almost periodic solution $u$, we can use the compactness property (\ref{Ecompact}) to deduce the following inequality (a detailed proof can be found in \cite[(7.4)]{MiaoMurphyZheng2014}):  
		\begin{equation}
			\inf_{t \in [0, +\infty)} N(t)^{2s_c} \int_{|x| \leq \frac{C(u)}{N(t)}} |u(t,x)|^2 \, dx \gtrsim_u 1. \label{Emass}
		\end{equation}
		Let $I \subset [0, \infty)$ be the union of contiguous characteristic subspaces $J_k$. Choose $C(u)$ sufficiently large. Using the assumption (\ref{Ebound}), the spatially localized Morawetz inequality (Lemma \ref{L1091}), Hölder's inequality, and   (\ref{Emass}), we obtain:
		\begin{align*}
			|I|^{s_c - \frac{1}{2}} 
			&\gtrsim_u \sum_{J_k \subset I} \int_{J_k} \int_{|x| \leq C(u)|J_k|^{\frac{1}{2}}} \frac{|u(t,x)|^{\alpha + 2}}{|x|^{1+b}} \, dx \, dt \\
			&\gtrsim_u \sum_{J_k \subset I} N_k^{1+b} \int_{J_k} \int_{|x| \leq C(u) N_k^{-1}} |u(t,x)|^{\alpha + 2} \, dx \, dt \\
			&\gtrsim_u \sum_{J_k \subset I} \int_{J_k}  N_k^{1 + b + 2\alpha} \left( \int_{|x| \leq C(u)N_k^{-1}} |u(t,x)|^2 \, dx \right)^{\frac{\alpha + 2}{2}} \, dt \\
			&\gtrsim_u \sum_{J_k \subset I} \int_{J_k} N_k^{1 + b + 2\alpha} \left( N(t)^{-2s_c} \right)^{\frac{\alpha + 2}{2}} \, dt \\
			&\gtrsim_u \int_I N(t)^{3 - 2s_c} \, dt \gtrsim_u |I|,
		\end{align*}
		where, in the last inequality, we use the fact that $s_c < \frac{3}{2}$ and $N(t) \geq 1$. If we take the length of the interval $I$ to be sufficiently large in the inequalities above, we reach a contradiction. Thus, Theorem \ref{T1-3} is proved.
	\end{proof}

	\section{The case  $\frac{3}{2}\le s_c<2$: Rapid frequency-cascade}\label{S5}
	This section is devoted to ruling out the rapid frequency-cascade solutions as described in Theorem \ref{TReduction2} for the case $\frac{3}{2} \leq s_c < 2$.   The main technical  tools used are the long-time Strichartz estimate and a frequency-localized Morawetz estimate.
	
	\subsection{Long-time Strichartz estimates}\label{S51}
Here, we establish long-time Strichartz estimates tailored to the Lin-Strauss Morawetz inequality. These estimates were originally introduced by Dodson \cite{Dodson2012} in the context of the mass-critical NLS. See also \cite{LiuMiaoZheng,LuZheng2017,Murphy2015} for further developments that are more closely related to our setting.
		\begin{theorem}[Long-time Strichartz estimate]\label{Tlong time SZ}
		Let $s_c \ge \frac{3}{2}$, and let $u : [0, T_{\text{max}}) \times \mathbb{R}^4 \to \mathbb{C}$ be an almost periodic solution to \eqref{INLS} with $N(t) \equiv N_k \geq 1$ on each characteristic subinterval $J_k \subset [0, T_{\text{max}})$. Assume
		\[
		u \in L_t^\infty([0, T_{\text{max}}), \dot{H}_x^s(\mathbb{R}^4)), \quad \text{for some } s_c - \frac{1}{2} < s \leq s_c.  
		\]
		Then, on any compact time interval $I \subset [0, T_{\text{max}})$, which is a union of characteristic subintervals $J_k$, and for any $N > 0$, the following holds:
		\begin{equation}
			\| |\nabla|^s u_{\leq N} \|_{L_t^2 L_x^{4,2}(I \times \mathbb{R}^4)} \lesssim_u 1 + N^{\sigma(s)} K_I^{1/2}, \label{3.2}
		\end{equation}
		where $K_I := \int_I N(t)^{3-2s_c} \, dt$ and $\sigma(s) := 2s_c - s - \frac{1}{2}$. Specifically, for $s = s_c$, we have
		\begin{equation}
			\| |\nabla|^{s_c} u_{\leq N} \|_{L_t^2 L_x^{4,2}(I \times \mathbb{R}^4)} \lesssim_u 1 + N^{s_c - \frac{1}{2}} K_I^{1/2}. \label{3.3}
		\end{equation}
		Moreover, for any $\eta > 0$, there exists $N_0 = N_0(\eta)$ such that for all $N \leq N_0$,
		\begin{equation}
			\| |\nabla|^{s_c} u_{\leq N} \|_{L_t^2 L_x^{4,2}(I \times \mathbb{R}^4)} \lesssim_u \eta (1 + N^{s_c - \frac{1}{2}} K_I^{1/2}). \label{3.4}
		\end{equation}
		The implicit constants in \eqref{3.2} and \eqref{3.4} are independent of $I$.
	\end{theorem}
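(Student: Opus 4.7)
The plan is to run a bootstrap on dyadic $N$ for the quantity $A(N,I) := \||\nabla|^s P_{\le N} u\|_{L^2_t L^{4,2}_x(I\times\mathbb{R}^4)}$, adapting Dodson's long-time Strichartz approach \cite{Dodson2012} to the energy-supercritical setting as in \cite{MiaoMurphyZheng2014,Murphy2015,LiuMiaoZheng}. The two key inputs are Proposition \ref{P:SZ} at the admissible pair $(2,4)$ (which makes the left-hand side a Strichartz norm), and the rescaled consequence of the compactness (\ref{Ecompact}): for every $\eta>0$ there exists $c(\eta)>0$ such that $\||\nabla|^{s_c} P_{\le c(\eta)N(t)} u(t)\|_{L^2}\le \eta$ uniformly in $t$. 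Because $N(t)\ge 1$, when $N$ is sufficiently small this smallness holds on every characteristic subinterval at once; this will be exploited for the small-constant bound \eqref{3.4}.

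First I apply the Duhamel formula from a fixed $t_0\in I$ and Proposition \ref{P:SZ} to reduce matters to
\[
A(N,I) \lesssim \||\nabla|^s P_{\le N} u(t_0)\|_{L^2} + \||\nabla|^s P_{\le N}(|x|^{-b}|u|^\alpha u)\|_{L^2_t L^{4/3,2}_x(I\times\mathbb{R}^4)}.
\]
The linear data term is bounded by $\|u(t_0)\|_{\dot H^s}\lesssim_u 1$ from the uniform $\dot H^s$ bound, and it becomes $O(\eta)$ when $N\le c(\eta)N(t_0)$ by interpolating this bound with the $\dot H^{s_c}$ compactness smallness. Partitioning $I=\bigsqcup_k J_k$ and running the same Duhamel step on each $J_k$, the squared contributions $\sum_k\|\cdot\|_{L^2_tL^{4,2}_x(J_k)}^2$ are rearranged using $|J_k|\sim N_k^{-2}$ and $K_I=\sum_k N_k^{3-2s_c}|J_k|$ into the factor $N^{2\sigma(s)}K_I$; the hypothesis $s>s_c-\tfrac{1}{2}$ is precisely what ensures the resulting tail over dyadic $N_k$ is summable.

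The bulk of the work is the nonlinear estimate. Using the Leibniz rule in Lorentz spaces (Lemma \ref{L:leibnitz}), the product rule for the inhomogeneous factor $|x|^{-b}f$ (Lemma \ref{L-qd}), Hardy's inequality (Lemma \ref{LHardy}), and the chain rule (Lemma \ref{L:6141}) together with a paraproduct decomposition (Lemma \ref{Lemmaparaproduct}), one distributes $|\nabla|^s P_{\le N}$ across the factors of $|x|^{-b}|u|^\alpha u$ and splits each $u$-factor as $u_{\le M}+u_{>M}$ at dyadic scales $M$. High-frequency inputs are absorbed by $\||\nabla|^{s_c}u\|_{L^\infty_t L^2_x}\lesssim_u 1$ and Strichartz norms of $|\nabla|^{s_c}u_{>M}$ (which assemble globally into $K_I^{1/2}$), while low-frequency inputs are either controlled by the compactness-based smallness or feed back into $A(M',I)$ for $M'\le M$. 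The outcome is an inequality of the schematic form
\[
A(N,I) \le C_u\bigl(1+N^{\sigma(s)}K_I^{1/2}\bigr) + C_u\sum_{M}\beta(N,M)\,A(M,I),
\]
with a dyadic kernel $\beta$ that is summable in $M$ and whose value for $M\sim N$ can be made $\le\tfrac{1}{2}$ by taking $c(\eta)$ small.

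A standard induction on dyadic $N$, initiated at a sufficiently large $N$ where $A(N,I)$ is directly controlled by the global Strichartz norm of $|\nabla|^{s_c}u$, then closes the bootstrap and yields \eqref{3.2}; specializing to $s=s_c$ gives \eqref{3.3}. For \eqref{3.4}, choosing $N_0=c(\eta)$ forces $N\le c(\eta) N_k$ for every $k$ (since $N_k\ge 1$), so the smallness $\eta$ propagates into both the linear data contribution and the entire nonlinear right-hand side, producing the claimed factor of $\eta$. The main obstacle is the nonlinear estimate in the presence of the singular weight $|x|^{-b}$: one must handle $|\nabla|^s$ with $s$ close to $s_c$ across a product involving both $|x|^{-b}$ and a low-regularity Hölder nonlinearity $|u|^\alpha u$, which requires the full arsenal of Lorentz-space product, paraproduct, Hardy, and chain-rule estimates, and it is precisely here that the constraints $b<3-s_c$ (to keep all Strichartz and Hardy exponents admissible) and $s>s_c-\tfrac{1}{2}$ (to make the dyadic tail summable) enter.
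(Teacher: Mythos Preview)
Your proposal is essentially correct and follows the same bootstrap-on-dyadic-$N$ approach as the paper, which reduces Theorem~\ref{Tlong time SZ} to the iterative Lemma~\ref{Iter} via the references you cite. Two small clarifications: the paper runs Duhamel once on all of $I$ and only partitions the \emph{nonlinear} piece $F(u_{\le N/\varepsilon_0})$ over the $J_k$'s (the decomposition is simply $F(u)=F(u_{\le N/\varepsilon_0})+[F(u)-F(u_{\le N/\varepsilon_0})]$, and Lemma~\ref{Lemmaparaproduct} is not invoked here---it is only needed later in Proposition~\ref{P1}); and the hypothesis $s>s_c-\tfrac{1}{2}$, equivalently $\sigma(s)<s_c$, is used not to sum over the characteristic subintervals $J_k$ but to close the downward induction on dyadic frequencies, where one needs $\sum_{M>N/\varepsilon_0}(N/M)^{s_c}M^{\sigma(s)}\lesssim N^{\sigma(s)}$ to converge.
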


By Lemma \ref{LSpacetime bounds}, it is straightforward to verify that Theorem \ref{Tlong time SZ} holds for sufficiently large $N$. For general $N > 0$, we proceed by induction. Following the argument in the proof of [Proposition 4.1, \cite{Murphy2015}] or [Theorem 3.1, \cite{LuZheng2017}], it suffices to prove the following iterative lemma:
\begin{lemma}[Iterative formula of $B(N)$]\label{Iter}
	Let $B(N):=\||\nabla|^{s_c}u_{\leq N}\|_{L_t^2L_x^{4,2}(I\times\mathbb{R}^4)}$. For any $\epsilon,\ \epsilon_0>0$, there exists a constant $C(\varepsilon,\varepsilon_0)>0$ such that 
	\begin{align}\label{Iter}
		B(N)\lesssim_u&\inf_{t\in I}\||\nabla|^{s_c}u_{\leq N}(t)\|_{L_x^2}+C(\varepsilon,\varepsilon_0)N^{\sigma(s)}K_I^{\frac12}
		\notag\\
		&\quad+\varepsilon^\alpha B(N/\varepsilon_0)+\sum_{M>N/\varepsilon_0}\Big(\frac{N}{M}\Big)^{s_c}B(M)
	\end{align}
	holds uniformly in $N$. 
\end{lemma}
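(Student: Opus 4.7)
The plan is to apply Strichartz to the Duhamel formula for $u_{\leq N}$ and then handle the nonlinear term via a frequency decomposition at the threshold $N/\varepsilon_0$. Concretely, choose $t_0 \in I$ nearly attaining $\inf_{t\in I}\||\nabla|^{s_c}u_{\leq N}(t)\|_{L^2_x}$, apply $|\nabla|^{s_c}P_{\leq N}$ to the Duhamel representation centered at $t_0$, and invoke Proposition \ref{P:SZ} with the Schrödinger-admissible pair $(2,4)$ (dual pair $(2,4/3)$) to reduce matters to estimating
\begin{equation}
\bigl\||\nabla|^{s_c}P_{\leq N}\bigl[|x|^{-b}F(u)\bigr]\bigr\|_{L^2_t L^{4/3,2}_x(I\times\mathbb{R}^4)}, \qquad F(u)=|u|^\alpha u, \notag
\end{equation}
in terms of the last three summands in the right-hand side of \eqref{Iter}.

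The key decomposition is $u = u_{\leq N/\varepsilon_0}+u_{>N/\varepsilon_0}$ together with $F(u)=F(u_{\leq N/\varepsilon_0})+[F(u)-F(u_{\leq N/\varepsilon_0})]$. For the high-frequency remainder, a telescoping identity gives $F(u)-F(u_{\leq N/\varepsilon_0}) = \sum_{M>N/\varepsilon_0}[F(u_{\leq M})-F(u_{\leq M/2})]$, and each summand contains a factor $u_M$ at frequency $\sim M$. Since the outer $P_{\leq N}$ restricts the output frequency to $\leq N<M$, Bernstein (Lemma \ref{L:Bernstein}) transfers $|\nabla|^{s_c}$ from the output onto $u_M$ at a cost of $(N/M)^{s_c}$; combined with the fractional Leibniz/chain rules (Lemmas \ref{L:leibnitz}, \ref{L:6141}, \ref{LVisan}), Hölder in Lorentz spaces, and the Lorentz Hardy inequality (Lemma \ref{LHardy}) to absorb $|x|^{-b}$, this piece contributes $\sum_{M>N/\varepsilon_0}(N/M)^{s_c}B(M)$. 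For the low-frequency piece $|x|^{-b}F(u_{\leq N/\varepsilon_0})$, distribute $|\nabla|^{s_c}$ via the chain rule: one copy of $u_{\leq N/\varepsilon_0}$ is measured in $L^2_t L^{4,2}_x$ (giving $B(N/\varepsilon_0)$), while the remaining $\alpha$ copies sit in $L^\gamma_t L^{\rho,2}_x$. Partitioning $I$ into $C(\varepsilon,\varepsilon_0)$ subintervals on which $\||\nabla|^{s_c}u_{\leq N/\varepsilon_0}\|_{L_t^\gamma L_x^{\rho,2}}<\varepsilon$, a partition supplied by the a priori Strichartz bound, produces the factor $\varepsilon^\alpha$. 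The residual term $C(\varepsilon,\varepsilon_0)N^{\sigma(s)}K_I^{1/2}$ collects the "mismatched" contributions where no Bernstein gain is available (e.g.\ terms in which $|\nabla|^{s_c}$ lands on the low-frequency factor): for these we use $u\in L_t^\infty\dot H_x^s$ to trade $s_c-s$ derivatives for a Bernstein factor $N^{s_c-s}$, and almost-periodicity together with Lemma \ref{LSpacetime bounds} to recognize the time integral as $\int_I N(t)^{3-2s_c}\,dt = K_I$; the arithmetic then yields precisely the exponent $\sigma(s)=2s_c-s-\tfrac12$.

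The main obstacle is to juggle three competing technical demands simultaneously: (i) the fractional derivative $|\nabla|^{s_c}$ with $s_c\in[\tfrac32,2)$ acting on the low-regularity nonlinearity $|u|^\alpha u$ when $\alpha<2$, where $F''$ is merely Hölder continuous and Lemma \ref{LVisan} must substitute for Lemma \ref{L:6141}; (ii) the singular weight $|x|^{-b}$, which remains controllable only in the range $b<3-s_c$ via the Lorentz Hardy inequality and Lemma \ref{L-qd}; and (iii) the precise bookkeeping of $N$-powers so that the $K_I^{1/2}$ factor matches the Morawetz scaling—the threshold $s > s_c-\tfrac12$ in the hypothesis is exactly what makes $\sigma(s)>0$ summable against the high-frequency tail and consistent with the induction-on-$N$ closing the iterative formula.
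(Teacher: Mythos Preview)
Your overall architecture (Strichartz on the Duhamel formula, the split $F(u)=F(u_{\le N/\varepsilon_0})+[F(u)-F(u_{\le N/\varepsilon_0})]$, and Bernstein for the high-frequency remainder) matches the paper. The gap is in your treatment of the low-frequency piece $|x|^{-b}F(u_{\le N/\varepsilon_0})$, specifically the mechanism producing the factor $\varepsilon^\alpha$.

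You propose to place the $\alpha$ undifferentiated copies in $L_t^\gamma L_x^{\rho,2}$ and partition $I$ into $C(\varepsilon,\varepsilon_0)$ subintervals on which this norm is below $\varepsilon$. This fails for two reasons. First, the H\"older exponents do not close: with one copy in $L_t^2$ and $\alpha$ copies in $L_t^\gamma$, $\gamma=2(\alpha+1)$, the output time exponent is $\tfrac12+\tfrac{\alpha}{2(\alpha+1)}\neq\tfrac12$, so you cannot reach the dual Strichartz space $L_t^2 L_x^{4/3,2}$. The $\alpha$ copies must sit in $L_t^\infty$, and an $L_t^\infty$ norm cannot be made small by time-partitioning. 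Second, even if the scheme closed, the number of subintervals would be controlled by the total Strichartz norm on $I$, which by Lemma~\ref{LSpacetime bounds} is of size $(1+\int_I N(t)^2\,dt)^{1/q}$. This is not uniform in $I$, so your ``constant'' $C(\varepsilon,\varepsilon_0)$ would depend on $I$, destroying the uniformity needed to run the induction on $N$ in Theorem~\ref{Tlong time SZ}.

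The paper's mechanism is genuinely different and rests on almost periodicity rather than time-partitioning. One works on each characteristic subinterval $J_k$ (where $N(t)\equiv N_k$), places the $\alpha$ copies in $L_t^\infty L_x^{4\alpha/(2-b)}$ via Sobolev from $L_t^\infty\dot H_x^{s_c}$, and then performs a \emph{second} frequency split of these copies at $c(\varepsilon)N_k$. The part below $c(\varepsilon)N_k$ is $<\varepsilon$ in $L_t^\infty\dot H_x^{s_c}$ by the compactness property~\eqref{Ecompact}, and this is what yields $\varepsilon^\alpha B(N/\varepsilon_0)$ with constants uniform in $I$. The part above $c(\varepsilon)N_k$ contributes only when $c(\varepsilon)N_k\le N/\varepsilon_0$; there Bernstein extracts $(N/N_k)^{s_c-1/2}$, the $\dot H^s$ hypothesis supplies $(N/\varepsilon_0)^{s_c-s}$, and summing $N_k^{1-2s_c}$ over $k$ using $|J_k|\sim N_k^{-2}$ produces exactly $K_I$. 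Your description of the $K_I$ term as coming from ``mismatched'' contributions is too vague to capture this; without the characteristic-interval decomposition and the secondary split at $c(\varepsilon)N_k$, neither the $\varepsilon^\alpha$ smallness nor the precise $N^{\sigma(s)}K_I^{1/2}$ scaling can be recovered.
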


\begin{proof}
	First, by Strichartz estimate, we have 
\begin{equation}
			B(N)\lesssim \inf_{t\in I}\||\nabla|^{s_c}u_{\leq N}(t)\|_{L_x^2}+\||\nabla|^{s_c}P_{\leq N}|x|^{-b}F(u)\|_{L_t^2L_x^{\frac43,2}([0,T_{\max})\times\mathbb{R}^4)}.\label{E452}
\end{equation}
	To estimate the nonlinear term, we decompose
	$$F(u)=F(u_{\leq N/\epsilon_0})+(F(u)-F(u_{\leq N/\epsilon_0})).$$
	Using Bernstein's inequality, Hölder's inequality, Sobolev embedding, and the bound (\ref{Ebound}), we obtain
	\begin{align}
		&\||\nabla|^{s_c}P_{\leq N}|x|^{-b}(F(u)-F(u_{\leq N/\epsilon_0}))\|_{L_t^2L_x^{\frac43,2}(I\times\mathbb{R}^4)} \notag\\
		\lesssim &N^{s_c}\||x|^{-b}\|_{L_x^{\frac{4}b,\infty}(\mathbb{R} ^4)}\|(F(u)-F(u_{\leq N/\epsilon_0}))\|_{L_t^2L_x^{\frac4{3-b},2}(I\times\mathbb{R}^4)} \notag\\
		\lesssim& N^{s_c}\|u\|_{L_t^\infty L_x^{\frac{4\alpha}{2-b}}(I\times\mathbb{R}^4)}^\alpha\|u_{>N/\epsilon_0}\|_{L_t^2L_x^{4,2}(I\times\mathbb{R}^4)}\notag\\
		\lesssim&\sum_{M>N/\epsilon_0}\Big(\frac{N}{M}\Big)^{s_c}B(M).\label{E453}
	\end{align}
We now turn to estimating $F(u_{\le N/\varepsilon _0})$. To begin, we focus on a single characteristic interval $J_k$. By applying Lemma \ref{L:leibnitz}, the Sobolev embedding, and the bound (\ref{Ebound}), we obtain the following estimate
	\begin{align*}
		&\||\nabla|^{s_c}P_{\leq N}|x|^{-b}F(u_{\leq N/\epsilon_0})\|_{L_t^2L_x^{\frac43,2}(J_k\times\mathbb{R}^4)}\\
		\lesssim&\||x|^{-b}\|_{L_x^{\frac4b,\infty}(\mathbb{R} ^4)}\||\nabla|^{s_c}F(u_{\leq N/\epsilon_0})\|_{L_t^2L_x^{\frac4{3-b},2}(J_k\times\mathbb{R}^4)}\\
		&+\||x|^{-b-s_c}\|_{L_x^{\frac{4}{b+s_c},\infty}(\mathbb{R} ^4)}\|F(u_{\leq N/\epsilon_0})\|_{L_t^2L_x^{\frac4{3-b-s_c},2}(J_k\times\mathbb{R}^4)}\\
		\lesssim&\|u_{\leq N/\epsilon_0}\|_{L_t^\infty L_x^{\frac{4\alpha}{2-b}}(I\times\mathbb{R}^4)}^\alpha\||\nabla |^{s_c}u_{\leq N/\epsilon_0}\|_{L_t^2L_x^{4,2}(J_k\times\mathbb{R}^4)}\\
		\lesssim& \||\nabla|^{s_c}P_{\leq c(\epsilon)N_k}u_{\leq N/\epsilon_0}\|_{L_t^\infty L_x^2(I\times\mathbb{R}^4)}^\alpha \||\nabla|^{s_c}u_{\leq N/\epsilon_0}\|_{L_t^2 L_x^{4,2}(J_k\times\mathbb{R}^4)}\\
		&+\||\nabla|^{s_c}P_{>c(\epsilon)N_k}u_{\leq N/\epsilon_0}\|_{L_t^\infty L_x^2(I\times\mathbb{R}^4)}^\alpha
		\||\nabla|^{s_c}u_{\leq N/\epsilon_0}\|_{L_t^2 L_x^{4,2}(J_k\times\mathbb{R}^4)}\\
		:=&I_k+II_k.
	\end{align*}
	By the compactness property, there exists $c=c(\epsilon)$ such that $\||\nabla|^{s_c}u_{\leq c(\epsilon)N(t)}\|_{L_t^\infty L_x^2}<\epsilon$, and thus
	$$I_k\lesssim \epsilon^\alpha\||\nabla|^{s_c}u_{\leq N/\epsilon_0}\|_{L_t^2 L_x^{4,2}(J_k\times\mathbb{R}^4)}.$$
	
	For the term $II_k$, we only need to consider the case $c(\epsilon)N_k\leq N/\epsilon_0$. By Bernstein's inequality and Lemma \ref{LSpacetime bounds}, we obtain
	\begin{align*}
		II_k\lesssim& \Big(\frac{N}{\epsilon_0c(\epsilon)N_k}\Big)^{s_c-\frac12}\||\nabla|^{s_c}u_{\leq N/\epsilon_0}\|_{L_t^\infty L_x^{2}(J_k\times\mathbb{R}^4)}\\
		\lesssim& \Big(\frac{N}{\epsilon_0c(\epsilon)N_k}\Big)^{s_c-\frac12}\Big(\frac{N}{\epsilon_0}\Big)^{s_c-s}\|u_{\leq N/\epsilon_0}\|_{L_t^\infty \dot{H}^s(J_k\times\mathbb{R}^4)}.
	\end{align*}
	
	Summing over all characteristic intervals, we deduce
	\begin{align*}
		&\||\nabla|^{s_c}P_{\leq N}|x|^{-b}F(u_{\leq N/\epsilon_0})\|_{L_t^2L_x^{\frac43,2}(I\times\mathbb{R}^4)}^2\\
		\lesssim&\sum_{J_k\subset I}\Big[\epsilon^{2\alpha}\||\nabla|^{s_c}u_{\leq N/\epsilon_0}\|_{L_t^2 L_x^{4,2}(J_k\times\mathbb{R}^4)}^2\\
		&\qquad\quad+\Big(\frac{N}{\epsilon_0c(\epsilon)N_k}\Big)^{2s_c-1}\Big(\frac{N}{\epsilon_0}\Big)^{2s_c-2s}\|u_{\leq N/\epsilon_0}\|_{L_t^\infty \dot{H}^s(J_k\times\mathbb{R}^4)}^2\Big]\\
		\lesssim&\epsilon^{2\alpha}B^2(N/\epsilon_0)+c(\varepsilon )^{-2s_c+1}\epsilon_0^{-2\sigma(s)}\sup_{J_k}\|u_{\leq N/\epsilon_0}\|_{L_t^\infty\dot{H}^s(J_k\times\mathbb{R}^4)}^2N^{2\sigma}K_I,
	\end{align*}
which together with (\ref{E452})	yields (\ref{Iter}).  This completes the proof of the lemma.
\end{proof}

	\subsection{The rapid frequency-cascade scenario}
	In this subsection, we use the long-time Strichartz estimate proved in the previous section to rule out   the existence of rapid frequency-cascade solutions, i.e. the almost periodic solutions as described in Theorem \ref{TReduction2} such that 
	\begin{equation}
		\int_0^{T_{\text{max}}} N(t)^{3-2s_c} \, dt < \infty.
	\end{equation}
	\begin{theorem}[No rapid frequency-cascades]\label{Tnorapid}
		Let $s_c \ge \frac{3}{2}$. Then there are no almost periodic solutions $u : [0, T_{\text{max}}) \times \mathbb{R}^4 \to \mathbb{C}$ to \eqref{INLS} with $N(t) \equiv N_k \geq 1$ on each characteristic subinterval $J_k \subset [0, T_{\text{max}})$ such that  $u$ blows up forward in time and 
		\begin{equation}
			K:=\int _0^{T_{\text{max}}}N(t)^{3-2s_c}dt<+\infty .\label{E129}
		\end{equation}
	\end{theorem}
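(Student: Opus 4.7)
I would argue by contradiction: assume such a rapid frequency-cascade solution $u$ exists and derive $u\equiv 0$, which contradicts $S_{[0,T_{\max})}(u)=+\infty$.

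\emph{Step 1: Concentration of $N(t)$.} First I would show $N(t_n)\to\infty$ along some sequence $t_n\to T_{\max}$. If $T_{\max}<\infty$, this is immediate from Corollary \ref{C1.9}. If $T_{\max}=\infty$ and $s_c=\frac{3}{2}$, then $3-2s_c=0$ and $K=T_{\max}=+\infty$, contradicting \eqref{E129}, so this case is vacuous. If $T_{\max}=\infty$ and $s_c>\frac{3}{2}$, the exponent $3-2s_c<0$ together with $N(t)\ge 1$ gives $N(t)^{3-2s_c}\ge C^{3-2s_c}>0$ on $\{t:N(t)\le C\}$; the finiteness of $K$ then forces this set to have finite measure for every $C>0$, so $N(t_n)\to\infty$ along a suitable subsequence.

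\emph{Step 2: Regularity upgrade to $L_x^2$.} Next I would upgrade $u\in L_t^\infty\dot H^{s_c}$ to $u\in L_t^\infty L_x^2$ by an iterated bootstrap based on Theorem \ref{Tlong time SZ} and the no-waste Duhamel formula. Concretely, with $K$ finite, \eqref{3.4} yields $\||\nabla|^s u_{\le N}\|_{L_t^2L_x^{4,2}([0,T_{\max})\times\mathbb{R}^4)}\lesssim_u \eta$ for $s\in(s_c-\tfrac{1}{2},s_c]$ and $N\le N_0(\eta)$; combining this with dual Strichartz (Proposition \ref{P:SZ}), the nonlinear bound of Lemma \ref{Lemmanonlinearestimate}, and the Lorentz-space Hardy, product- and chain-rule estimates from Section \ref{S:2.2}, I would obtain $u\in L_t^\infty\dot H^s$ for such $s$. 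Iterating this scheme a finite number of times, at each stage gaining $\frac{1}{2}$ derivative and re-proving a long-time Strichartz-type bound at the new regularity level, I would eventually conclude $u\in L_t^\infty L_x^2([0,T_{\max})\times\mathbb{R}^4)$, so that the mass of $u$ is finite.

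\emph{Step 3: Vanishing mass and contradiction.} Since \eqref{INLS} conserves $L^2$ mass whenever the mass is finite, $\|u(t)\|_{L_x^2}\equiv M(u)$ on $[0,T_{\max})$. I would show $M(u)=0$ by proving $\|u(t_n)\|_{L_x^2}\to 0$ along the subsequence from Step 1. For small $\eta>0$, decompose
\[
u(t_n)=P_{\le \eta N(t_n)}u(t_n)+P_{>\eta N(t_n)}u(t_n).
\]
The high-frequency piece is controlled by Bernstein and the $\dot H^{s_c}$ bound,
\[
\|P_{>\eta N(t_n)}u(t_n)\|_{L_x^2}\lesssim (\eta N(t_n))^{-s_c}\|u\|_{L_t^\infty\dot H^{s_c}}\longrightarrow 0.
\]
For the low-frequency piece, Lemma \ref{LNoWasteDuhamel} gives
\[
P_{\le\eta N(t_n)}u(t_n)=\lim_{T\to T_{\max}}(-i)\int_{t_n}^{T}e^{i(t_n-s)\Delta}P_{\le\eta N(t_n)}\bigl[|x|^{-b}|u|^\alpha u\bigr]\,ds,
\]
and I would bound its $L_x^2$ norm via dual Strichartz, using the long-time Strichartz control from Step 2 together with the compactness property \eqref{Ecompact} (which effectively localizes the Fourier support of $u(s)$, and hence of the nonlinearity, away from $\eta N(t_n)$ once $\eta$ is small). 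Choosing $\eta$ small depending on the compactness modulus and then letting $N(t_n)\to\infty$ yields $\|P_{\le\eta N(t_n)}u(t_n)\|_{L_x^2}\to 0$. Combining the two pieces gives $M(u)=0$, hence $u\equiv 0$, contradicting $S_{[0,T_{\max})}(u)=+\infty$.

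\textbf{Main obstacle.} The principal difficulty is Step 2, the regularity descent to $L_x^2$. Since Theorem \ref{Tlong time SZ} is stated only for $s\in(s_c-\frac{1}{2},s_c]$, the iteration cannot be closed by a naive reapplication; rather, at each lower regularity level one must redo the Strichartz/nonlinear analysis and re-establish a long-time Strichartz-type bound, carefully tracking the $K$-dependence and controlling the singular weight $|x|^{-b}$ via the Lorentz-space tools of Section \ref{S:2.2}. Closing this bootstrap uniformly on the full interval $[0,T_{\max})$, rather than only on compact subintervals, is the subtle step.
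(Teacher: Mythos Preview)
Your proposal follows the same contradiction strategy as the paper: divergence of $N(t)$, an iterated regularity upgrade via the long-time Strichartz estimate and no-waste Duhamel, and finally vanishing of the mass. Step~1 and the outline of Step~2 are essentially correct. There is, however, a concrete gap in how Step~2 and Step~3 fit together.

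Your Step~2 stops the bootstrap at $u\in L_t^\infty L_x^2$, and this is one iteration short of what is needed to close Step~3. Knowing only $u\in L_t^\infty L_x^2$ gives no smallness for the low-frequency mass: interpolating the bare $L^2$ bound against the $\dot H^{s_c}$-compactness \eqref{Ecompact} yields only $\|P_{\le c(\eta)N(t)}u(t)\|_{L^2}\lesssim \|u\|_{L_t^\infty L_x^2}$, with no gain in $\eta$. Your proposed alternative---bounding $\|P_{\le \eta N(t_n)}u(t_n)\|_{L^2}$ by no-waste Duhamel plus dual Strichartz---runs into the difficulty that at the $L^2$ level the relevant exponent $\sigma(0)=2s_c-\tfrac12$ in the long-time Strichartz machinery is \emph{positive}, so as the truncation frequency $\eta N(t_n)\to\infty$ the resulting bound grows rather than decays. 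The heuristic that compactness ``localizes the Fourier support of the nonlinearity away from $\eta N(t_n)$'' is also hard to make rigorous, since products spread frequency support and $N(s)$ for $s\in[t_n,T_{\max})$ is not a priori bounded below by $N(t_n)$.

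The paper closes this by pushing the bootstrap one more step, to \emph{negative} regularity $u\in L_t^\infty\dot H^{-\varepsilon}$ for some $0<\varepsilon<\tfrac12$ (Propositions \ref{P1}--\ref{P2}; two applications of the same iterative lemma suffice). With this in hand, the low-frequency mass is handled by a clean H\"older/interpolation on the Fourier side,
\[
\int_{|\xi|\le c(\eta)N(t)}|\hat u(t,\xi)|^2\,d\xi
\ \lesssim\ \Bigl(\int_{|\xi|\le c(\eta)N(t)}|\xi|^{2s_c}|\hat u|^2\,d\xi\Bigr)^{\frac{\varepsilon}{s_c+\varepsilon}}
\Bigl(\int|\xi|^{-2\varepsilon}|\hat u|^2\,d\xi\Bigr)^{\frac{s_c}{s_c+\varepsilon}}
\ \lesssim_u\ \eta^{\frac{\varepsilon}{s_c+\varepsilon}},
\]
which, together with your Bernstein estimate for the high-frequency piece and $N(t_n)\to\infty$, gives $M(u)=0$ and the desired contradiction. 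In short: your plan becomes correct if in Step~2 you iterate down to $\dot H^{-\varepsilon}$ rather than stopping at $L^2$, and in Step~3 you replace the Duhamel-based low-frequency argument by this interpolation.
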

	
	The proof proceeds by contradiction. Suppose such a solution $u$ exists. From \eqref{E129} and Corollary 1.7, it follows that
\begin{equation}
		\lim_{t \to T_{\text{max}}} N(t) = \infty,  \label{E451}
\end{equation}
	whether $T_{\text{max}}$ is finite or infinite. Together with the compactness property, this implies
	\begin{align}\label{Gs-2}
		\lim_{t \to T_{\text{max}}} \| |\nabla|^{s_c} u_{\leq N} \|_{L_x^2(\mathbb{R}^4)} = 0,  
	\end{align}
	for any $N > 0$.

To proceed, we require the following Proposition \ref{P1} and Proposition \ref{P2}. These results show that an almost periodic solution satisfying (\ref{E129}) must exhibit negative regularity. In the final part of this subsection, we will exploit this negative regularity to derive a contradiction, thereby concluding the proof of Theorem \ref{Tnorapid}.
\begin{proposition}[Lower regularity]\label{P1}
	Let $ \frac{3}{2}\le s_c<2$ and let $u : [0, T_{\text{max}}) \times \mathbb{R}^4 \to \mathbb{C}$ be an almost periodic solution to \eqref{INLS} satisfying \eqref{E129}. Suppose that
	\begin{equation}
		u \in L_t^\infty([0, T_{\text{max}}), \dot{H}_x^s(\mathbb{R}^4)), \quad \text{for some } s_c - \frac{1}{2} < s \leq s_c.\notag
	\end{equation}
	Then it follows that
	\begin{equation}
		u \in L_t^\infty([0, T_{\text{max}}), \dot{H}_x^\beta(\mathbb{R}^4)), \quad \forall s - \sigma(s) < \beta \leq s_c,\notag
	\end{equation}
	where $\sigma(s) = 2s_c - s - \frac{1}{2}$.
\end{proposition}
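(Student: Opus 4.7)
The strategy is a bootstrap that upgrades the assumed $\dot H^s$ regularity to $\dot H^\beta$ for every $\beta > s-\sigma(s)$, using the long-time Strichartz estimate (Theorem \ref{Tlong time SZ}) together with the no-waste Duhamel formula (Lemma \ref{LNoWasteDuhamel}).

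First I would reduce to a low-frequency decay estimate. By interpolation between the $L_t^\infty \dot H^s$ and the almost-periodic $L_t^\infty \dot H^{s_c}$ bounds, we already have $u \in L_t^\infty \dot H^\beta$ for $\beta \in [s,s_c]$, so only the range $\beta \in (s-\sigma(s), s)$ needs work. For any such $\beta$, the high-frequency contribution $\sum_{N \ge 1} N^{2\beta}\|P_N u\|_{L_t^\infty L_x^2}^2$ is dominated by $\|u\|_{L_t^\infty \dot H^s}^2$, so the whole matter reduces to proving
\begin{equation}
\|P_{\le N} u\|_{L_t^\infty L_x^2([0,T_{\max})\times \mathbb{R}^4)} \lesssim_u N^{\sigma(s)-s} \qquad \text{for all } N \le 1, \notag
\end{equation}
after which dyadic summation gives $\sum_{N \le 1} N^{2(\beta+\sigma(s)-s)} < \infty$ whenever $\beta > s-\sigma(s)$, yielding the claim.

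To establish this low-frequency bound I would first apply the no-waste Duhamel formula together with Strichartz duality on the admissible pair $(2,4)$: because the Duhamel equality holds weakly in $\dot H^{s_c}$ and $P_{\le N}:\dot H^{s_c}\to L_x^2$ is continuous, testing against arbitrary $L^2$ functions and invoking Proposition \ref{P:SZ} gives
\begin{equation}
\|P_{\le N} u\|_{L_t^\infty L_x^2} \lesssim \|P_{\le N}\bigl(|x|^{-b}|u|^\alpha u\bigr)\|_{L_t^2 L_x^{4/3, 2}([0,T_{\max})\times \mathbb{R}^4)}. \notag
\end{equation}
Following the scheme of Lemma \ref{Iter}, I would then split $u = u_{\le N/\varepsilon_0} + u_{>N/\varepsilon_0}$ and write $|u|^\alpha u = F(u_{\le N/\varepsilon_0}) + [F(u) - F(u_{\le N/\varepsilon_0})]$. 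H\"older in Lorentz against $\||x|^{-b}\|_{L^{4/b,\infty}}$, the Sobolev embedding $\dot H^{s_c} \hookrightarrow L^{4/(2-s_c),2}$, and the long-time Strichartz estimate applied at regularity $s$ (permitted since $s > s_c - 1/2$) together yield an iterative inequality of the schematic form
\begin{equation}
\mathcal{A}(N) \lesssim_u C(\varepsilon,\varepsilon_0)\,N^{\sigma(s)-s} + \varepsilon^\alpha \mathcal{A}(N/\varepsilon_0) + \sum_{M > N/\varepsilon_0} \Bigl(\frac{N}{M}\Bigr)^{s_c} \mathcal{A}(M), \notag
\end{equation}
where $\mathcal{A}(N):=\|P_{\le N} u\|_{L_t^\infty L_x^2}$. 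The $N^{\sigma(s)-s}$ term originates from $\||\nabla|^s u_{\le M}\|_{L_t^2 L_x^{4,2}} \lesssim 1 + M^{\sigma(s)}K^{1/2}$ combined with the finiteness of $K$ from (\ref{E129}); the $\varepsilon^\alpha$ factor is extracted from the compactness property (\ref{Ecompact}) at intermediate frequencies; the tail sum comes from a dyadic Bernstein argument on the high-frequency remainder. A dyadic induction on $N$, in the same style as the derivation of Theorem \ref{Tlong time SZ} from Lemma \ref{Iter}, then closes the iteration and produces $\mathcal{A}(N) \lesssim_u N^{\sigma(s)-s}$ uniformly for $N \le 1$.

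The main obstacle is the iterative nonlinear estimate: the weight $|x|^{-b}$ does not commute with frequency projections, so the gain in $N$ from $P_{\le N}$ must be extracted indirectly, through H\"older in Lorentz spaces and Bernstein at dyadic scales, while simultaneously capturing the $\sigma(s)$ improvement delivered by the long-time Strichartz at regularity $s$. The hypothesis $s > s_c - 1/2$ is exactly what guarantees $\sigma(s)>0$, which is the positivity needed for the dyadic iteration to converge, as well as for Theorem \ref{Tlong time SZ} to apply at regularity $s$ in the first place.
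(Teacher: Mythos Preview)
Your overall strategy---long-time Strichartz plus no-waste Duhamel---is the paper's, but the execution has a genuine gap: you never use the rapid-cascade hypothesis to remove the additive constant in the Strichartz bound. Quoting Theorem~\ref{Tlong time SZ} as a black box gives only $\||\nabla|^s u_{\le M}\|_{L_t^2 L_x^{4,2}}\lesssim 1+M^{\sigma(s)}K^{1/2}$, and for $M\le 1$ the ``$1$'' dominates. Feeding this into any Duhamel scheme yields at best $\||\nabla|^s u_{\le N}\|_{L_t^\infty L_x^2}\lesssim_u 1$, hence $\|P_N u\|_{L_t^\infty L_x^2}\lesssim N^{-s}$---no improvement over the a~priori $\dot H^s$ bound, and nowhere near $N^{\sigma(s)-s}$. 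The paper avoids this by \emph{re-running} the iteration of Lemma~\ref{Iter} on an exhausting sequence of compact intervals $I_n\nearrow[0,T_{\max})$ while keeping the boundary term $\inf_{t\in I_n}\||\nabla|^{s_c}u_{\le N}(t)\|_{L_x^2}$ explicit; the cascade hypothesis \eqref{E129} forces $N(t)\to\infty$, so by \eqref{Gs-2} this infimum vanishes in the limit, yielding the sharp spacetime decay \eqref{E431}, namely $\||\nabla|^{s_c}u_{\le N}\|_{L_t^2 L_x^{4,2}([0,T_{\max}))}\lesssim N^{\sigma(s)}$ with \emph{no} additive constant. Only then is no-waste Duhamel invoked to pass to $L_t^\infty$.

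A second, related issue is that your recursion on $\mathcal{A}(N)=\|P_{\le N}u\|_{L_t^\infty L_x^2}$ cannot close as written. After the dual Strichartz pair $(2,4)$ and H\"older, the high-frequency tail carries a factor $\|u_{>N/\varepsilon_0}\|_{L_t^2 L_x^{4,2}}$, a genuine spacetime norm over the (possibly infinite) interval $[0,T_{\max})$; there is no way to dominate this by $\mathcal{A}(M)=\|u_{\le M}\|_{L_t^\infty L_x^2}$, so the asserted inequality with tail $\sum_{M>N/\varepsilon_0}(N/M)^{s_c}\mathcal{A}(M)$ is not what the estimates actually produce. The paper separates the two roles cleanly: first the spacetime iteration on $B(N)$ (giving \eqref{E431}), then a single Duhamel step in which the freshly proved spacetime decay controls every nonlinear term. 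For $s<s_c$ this second step also requires the negative-derivative paraproduct estimate of Lemma~\ref{Lemmaparaproduct} (carried out in Lebesgue rather than Lorentz spaces) to handle $|\nabla|^{s-s_c}$ acting on the inhomogeneous nonlinearity---a nontrivial point your ``dyadic Bernstein argument'' does not cover.
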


 The proof of Proposition \ref{P1} follows a similar strategy to that of [Lemma 4.3, \cite{Murphy2015}] and [Proposition 4.2, \cite{LuZheng2017}], where their arguments crucially rely on the negative derivative estimates provided by Lemma \ref{Lemmaparaproduct}. However, such estimates have not yet been established in the Lorentz space setting due to the lack of a Coifman–Meyer multiplier theorem in this framework.
To circumvent this issue, we first apply embedding theorems to reduce the Lorentz norms to Lebesgue norms, and then invoke the negative derivative estimates from Lemma \ref{Lemmaparaproduct}.  
Subsequently, to handle the inhomogeneous coefficient in the Lebesgue space setting, we will make use of the nonlinear estimates  in  Lemma \ref{L-qd}.

\begin{proof}[\textbf{Proof of Proposition \ref{P1}}]
	Let $I_n\subset [0,T_{max})$ be unions of characteristic subintervals, each consisting of a finite number of contiguous characteristic subintervals. We first claim that for any $N>0$, the following estimate holds:
	\begin{align}\label{Gs-1}
		B_n(N)\lesssim_u \inf_{t\in I_n}\||\nabla|^{s_c}u_{\leq N}(t)\|_{L_x^2}+N^{\sigma(s)},
	\end{align}
	where 
	$B_n(N):=\||\nabla|^{s_c}u_{\leq N}\|_{L_t^2L_x^{4,2}(I_n\times\mathbb{R}^4)}.$
	
	In fact,  by the same argument as that used to derive (\ref{Iter}), we obtain
	\begin{align*}
		B_n(N)\lesssim_u&\inf_{t\in I_n}\||\nabla|^{s_c}u_{\leq N}(t)\|_{L_x^2}+C(\varepsilon,\varepsilon_0)N^{\sigma(s)}
		\notag\\
		&\quad+\varepsilon^\alpha B_n(N/\varepsilon_0)+\sum_{M>N/\varepsilon_0}\Big(\frac{N}{M}\Big)^{s_c}B_n(M).
	\end{align*}
The estimate (\ref{Gs-1}) then follows from a standard iterative argument.
	
	Letting $n \to \infty$ in \eqref{Gs-1} and applying \eqref{Gs-2}, we  get 
	\begin{equation}
		\||\nabla|^{s_c}u_{\leq N}\|_{L_t^2L_x^{4,2}([0,T_{\max})\times\mathbb{R}^4)}\lesssim_u N^{\sigma(s)}, \quad \forall\ N\geq0.\label{E431}
	\end{equation}
	
	In what follows, we use (\ref{E431}) to prove that
	\begin{equation}
		\||\nabla|^su_{\leq N}\|_{L_t^\infty L_x^2([0,T_{\max})\times \mathbb{R}^4)}\lesssim N^{\sigma(s)}.\label{E432}
	\end{equation}
	First, by  the no-waste Duhamel formula (Lemma \ref{LNoWasteDuhamel}) and the Strichartz estimate, we obtain
	\begin{align}
		\||\nabla|^su_{\leq N}\|_{L_t^\infty L_x^2([0,T_{\max})\times \mathbb{R}^4)}\lesssim \||\nabla|^sP_{\leq N}(|x|^{-b}F(u))\|_{L_t^2 L_x^{\frac43,2}([0,T_{\max})\times\mathbb{R}^4)}.\notag
	\end{align}
	Then we decompose $F(u)$ as
	$$F(u)=F(u_{\leq N})+(F(u)-F(u_{\leq N})).$$
	For the first term, using Bernstein, H\"older and estimate (\ref{E431}), we obtain
	\begin{align*}
	&\||\nabla|^sP_{\leq N}|x|^{-b}F(u_{\leq N})\|_{L_t^2 L_x^{\frac43,2}([0,T_{\max})\times\mathbb{R}^4)}\\
	\lesssim&\| |x|^{-b}\|_{L_x^{\frac4b,\infty}}\|u\|_{L_t^\infty L_x^{\frac{4\alpha}{s_c-s+2-b}}([0,T_{\max})\times\mathbb{R}^4)}^{\alpha}\||\nabla|^su_{\leq N}\|_{L_t^2L_x^{\frac{4}{1-s_c+s},2}([0,T_{\max})\times\mathbb{R}^4)}\\
	&+\||x|^{-b}\|_{L_x^{\frac4b,\infty}}\||x|^{-s}F(u_{\leq N})\|_{L_t^2L_x^{\frac4{3-b},2}([0,T_{\max})\times\mathbb{R}^4)}\\
	\lesssim& \|u\|_{L_t^\infty\dot{H}^{s_1}}^{\alpha}\||\nabla|^{s_c}u_{\leq N}\|_{L_t^2L_x^{4,2}([0,T_{\max})\times\mathbb{R}^4)}+\||\nabla|^{s}F(u_{\leq N})\|_{L_t^2L_x^{\frac4{3-b},2}([0,T_{\max})\times\mathbb{R}^4)}\\
	\lesssim& \|u\|_{L_t^\infty\dot{H}^{s_1}}^{\alpha}\||\nabla|^{s_c}u_{\leq N}\|_{L_t^2L_x^{4,2}([0,T_{\max})\times\mathbb{R}^4)}\\
	\lesssim& N^{\sigma(s)},
\end{align*}
where $s_1:=2-\frac{s_c-s+2-b}{\alpha}=s_c-\frac{s_c-s}{\alpha }\in (s,s_c]$, and in the second inequality we used   the Hardy's inequality and the  embedding  $\dot H^{s_1}(\mathbb{R} ^4)\hookrightarrow  L^{\frac{4\alpha }{s_c-s+2-b}}(\mathbb{R} ^4)$.

For the second term, we consider two cases. When $s = s_c$, by arguing as in (\ref{E453}) and applying (\ref{E431}), we obtain  
\begin{align}
	& \||\nabla|^sP_{\leq N}|x|^{-b}(F(u)-F(u_{\leq N}))\|_{L_t^2 L_x^{\frac43,2}([0,T_{\max})\times\mathbb{R}^4)}\notag\\
	&\lesssim \sum _{M>N}(\frac{N}{M})^{s_c} \||\nabla|^{s_c}u_{\le M}\|_{L_t^{2}L_x^{4,2}([0,T_{\max})\times\mathbb{R}^4)}\notag\\
	&\lesssim  \sum _{M>N}(\frac{N}{M})^{s_c}  M^{\sigma(s)} \lesssim N^{\sigma (s)}.\notag
\end{align}
When $s < s_c$, we have  $s_1=s_c-\frac{s_c-s}{\alpha }\in (s,s_c)$. By employing the Bernstein inequality, the embedding $L_x^{\frac{4}{3}} \hookrightarrow L_x^{\frac{4}{3},2}$, Lemma \ref{Lemmaparaproduct}, Lemma \ref{L-qd}, and the Sobolev embedding theorem, we obtain
\begin{align*}
	& \||\nabla|^sP_{\leq N}|x|^{-b}(F(u)-F(u_{\leq N}))\|_{L_t^2 L_x^{\frac43,2}([0,T_{\max})\times\mathbb{R}^4)}\\
	\lesssim& N^{s_c}\left\||\nabla|^{s-s_c}\int_{0}^{1}|x|^{-b}F'(u_{\leq N}+\lambda u_{\geq N})u_{\geq N}d{\lambda}\right\|_{L_t^2 L_x^{\frac43}}\\
	\lesssim& N^{s_c}\int_{0}^1\||\nabla|^{s_c-s}|x|^{-b}F'(u_{\leq N}+\lambda u_{\geq N})\|_{L_t^\infty L_x^{\frac{2}{s_c-s+1}}}d\lambda \||\nabla|^{s-s_c}u_{\geq N}\|_{L_t^2 L_x^{\frac{4}{1-s_c+s}}}\\
	\lesssim&N^{s_c} \||\nabla|^{s_c-s}u\|_{L_t^\infty L_x^{\frac{4\alpha}{(\alpha+1)(s_c-s)+2-b}}}\left(\|u\|_{L_t^\infty L_x^{\frac{4\alpha}{s_c-s+2-b }-}}^{\alpha-1}\|u\|_{L_t^\infty L_x^{\frac{4\alpha}{s_c-s+2-b }+}}^{\alpha-1}\right)^\frac12\|u_{>N}\|_{L_t^2L_x^{4,2}}\\
	\lesssim&\|u\|_{L_t^\infty\dot{H}^{s_1}}\left(\|u\|_{L_t^\infty \dot{H}^{s_1^-}}^{\alpha-1}\|u\|_{L_t^\infty \dot{H}^{s_1^+}}^{\alpha-1}\right)^\frac12\sum_{M>N}\Big(\frac{N}{M}\Big)^{s_c}\||\nabla|^{s_c}u_M\|_{L_t^2L_x^{4,2}}\\
	\lesssim& N^{\sigma(s)},
\end{align*}
In the final two steps, we have used the Sobolev embeddings $\dot H^s (\mathbb{R} ^4)\cap \dot H^{s_c}(\mathbb{R} ^4)\hookrightarrow \dot H^{s_1\pm}(\mathbb{R}^4) \hookrightarrow L^{\frac{4\alpha }{s_c - s + 2 - b} \pm}(\mathbb{R}^4)$ along with the bound (\ref{E431}). This completes the proof of (\ref{E432}).

Finally, we apply (\ref{E432}) to complete the proof of Proposition \ref{P1}. By the Bernstein inequality and (\ref{E432}), we obtain:
	\begin{align*}
		\||\nabla|^{\beta }u\|_{L_t^\infty L_x^2} &\lesssim \||\nabla|^{\beta}u_{\geq1}\|_{L_t^\infty L_x^2}+\sum_{N\leq 1}N^{\beta-s}\||\nabla|^{s}u_N\|_{L_t^\infty L_x^2}\\
		&\lesssim \||\nabla|^{s_c}u_{\geq1}\|_{L_t^\infty L_x^2}+\sum_{N\leq 1}N^{\beta-s+\sigma(s)}\lesssim 1,
	\end{align*}
	provided that $s-\sigma(s)<\beta\le s_c$. This complets the proof of Proposition \ref{P1}. 
\end{proof}

\begin{proposition}[Negative regularity]\label{P2}
	Let $u : [0, T_{\text{max}}) \times \mathbb{R}^4 \to \mathbb{C}$ be an almost periodic solution to \eqref{INLS} as in Theorem \ref{TReduction2} with \eqref{E129}. Suppose that
	\[
	\inf_{t \in [0, T_{\text{max}})} N(t) \geq 1.
	\]
	Then, for any $0 < \varepsilon < \frac{1}{2}$, we have
\begin{equation}
		u \in L_t^\infty([0, T_{\text{max}}); \dot{H}_x^{-\varepsilon}(\mathbb{R}^4)).  \label{E43w2}
\end{equation}
\end{proposition}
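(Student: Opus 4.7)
The plan is to iterate Proposition \ref{P1} exactly twice, exploiting the fact that the regularity drop $\sigma(s) = 2s_c - s - \tfrac12$ it affords grows to nearly $s_c$ as $s$ is pushed down toward the admissible edge $s_c - \tfrac12$. Applying Proposition \ref{P1} with the starting regularity $s = s_c$ (which trivially satisfies the hypothesis $s > s_c - \tfrac12$) immediately yields $u \in L_t^\infty \dot H_x^\beta$ for every $\beta \in (\tfrac12, s_c]$. Since the regime under consideration is $s_c \ge \tfrac32$, the edge value $s_c - \tfrac12 \ge 1$ lies strictly above $\tfrac12$, so in particular, for every sufficiently small $\eta > 0$, one records $u \in L_t^\infty \dot H_x^{s_c - 1/2 + \eta}$.

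Next I would reapply Proposition \ref{P1}, now with the refined exponent $s = s_c - \tfrac12 + \eta$, which still satisfies $s > s_c - \tfrac12$. The conclusion reads $u \in L_t^\infty \dot H_x^\beta$ for every $\beta > s - \sigma(s) = 2s - 2s_c + \tfrac12 = -\tfrac12 + 2\eta$. Given any target regularity $-\varepsilon$ with $\varepsilon \in (0, \tfrac12)$, picking $\eta$ in the range $\bigl(0, (\tfrac12 - \varepsilon)/2\bigr)$ guarantees $-\varepsilon > -\tfrac12 + 2\eta$, placing $\beta = -\varepsilon$ in the admissible interval and delivering \eqref{E43w2}.

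The substantive work has already been carried out inside Proposition \ref{P1}; the present proposition simply chains two applications of it together. The one potential obstacle is the degradation of implicit constants as $\eta \to 0^+$: inside Proposition \ref{P1}, the constants produced by the long-time Strichartz estimate (Theorem \ref{Tlong time SZ}), the paraproduct bound (Lemma \ref{Lemmaparaproduct}), and the inhomogeneous nonlinear estimate (Lemma \ref{L-qd}) may blow up as $s$ approaches the threshold $s_c - \tfrac12$ from above. This is harmless, however, because $\eta$ is selected \emph{after} $\varepsilon$ has been fixed, so all constants appearing in the second iteration are finite. Finally, the hypothesis $\inf_{t} N(t) \ge 1$ is what allows Theorem \ref{Tlong time SZ}, and hence Proposition \ref{P1}, to be invoked at each step of the iteration.
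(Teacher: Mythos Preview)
Your proposal is correct and follows essentially the same approach as the paper: apply Proposition \ref{P1} first with $s=s_c$ to reach $\dot H^\beta$ for $\beta>\tfrac12$, then reapply it with $s=(s_c-\tfrac12)_+$ to push the regularity below zero. Your additional remarks about the dependence of constants on $\eta$ and the role of the hypothesis $N(t)\ge 1$ are accurate elaborations but not substantively different from the paper's two-line argument.
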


\begin{proof}
	First, applying Proposition \ref{P1} with $s_c = s$, we obtain
	$$u\in L_t^\infty([0,T_{\text{max}});\dot{H}_x^{\beta}), \quad \text{for all } \frac12<\beta\leq s_c.$$
	Then, applying Proposition \ref{P1} again with $s=(s_c - \frac{1}{2})_{+}$ yields   (\ref{E43w2}).  
\end{proof}

\begin{proof}[\textbf{Proof of Theorem \ref{Tnorapid}}]
	 Since  $	\left\{ N(t)^{s_c-2} u(t, N(t)^{-1} x) : t \in [0, T_{\max}) \right\}$
	is precompact in $\dot{H}^{s_c}(\mathbb{R}^4)$, by Arzala-Ascoli theorem, for any  $\eta>0, $ there exists  $c(\eta)>0$ such that 
	\begin{equation}
		\int _{|\xi|\le c(\eta)N(t)}|\xi|^{2s_c}|\hat u(t.\xi)|^2d\xi \le \eta \label{E591}
	\end{equation}
	Interpolating between the inequalities (\ref{E591}) and (\ref{E43w2}), we obtain that for any $\eta > 0$, there holds:
	$$\int_{|\xi|\leq c(\eta)N(t)}|\hat{u}(t,\xi)|^2d\xi\lesssim_u\eta^{\frac{\epsilon}{s_c+\epsilon}}.$$
	Therefore, using (\ref{Gs-2}), we obtain
	\begin{align*}
		M[u_0]=M[u(t)] &= \int_{|\xi|\leq c(\eta)N(t)}|\hat{u}(t,\xi)|^2d\xi + \int_{|\xi|\geq c(\eta)N(t)}|\hat{u}(t,\xi)|^2d\xi \\
		&\lesssim_u \eta^{\frac{\epsilon}{s_c+\epsilon}} + (c(\eta)N(t))^{-2s_c}\||\nabla|^{s_c}u\|_{L_t^\infty L_x^2}^2\\
		&\lesssim_u \eta^{\frac{\epsilon}{s_c+\epsilon}} + (c(\eta)N(t))^{-2s_c}.
	\end{align*}
	Letting $t \to T_{\text{max}}$, by (\ref{E451}) we deduce $M(u_0) = 0$, hence $u_0 = 0$, which contradicts the fact that  $u$ blows up forward in time. This completes the proof of Theorem \ref{Tnorapid}.
\end{proof}

	\section{The Case $\frac{3}{2} \leq s_c < 2$: Soliton}\label{S6}
	In this section, we rule out  soliton solutions as described in Theorem \ref{TReduction2}  for the case $\frac{3}{2} \leq s_c < 2$. Subsection \ref{S61} focuses on establishing a frequency-localized Lin-Strauss Morawetz inequality, which will be utilized in Subsection \ref{S62} to rule out soliton solutions.
	
	\subsection{The Frequency-Localized Morawetz Inequality}\label{S61}
	This subsection develops spacetime estimates for the high-frequency components of almost periodic solutions to \eqref{INLS}. These estimates will be applied in the next section to preclude the existence of quasi-soliton solutions as described in Theorem \ref{TReduction2}.
	
	\begin{theorem}[Frequency-Localized Morawetz Inequality]\label{TMorawetz}
		Let $s_c \ge  \frac{3}{2}$, and let $u : [0, T_{\text{max}}) \times \mathbb{R}^4 \to \mathbb{C}$ be an almost periodic solution to \eqref{INLS} with $N(t) \equiv N_k \geq 1$ on each characteristic subinterval $J_k \subset [0, T_{\text{max}})$. Let $I \subset [0, T_{\text{max}})$ be a compact interval composed of consecutive characteristic subintervals $J_k$. Then, for any $\eta > 0$, there exists $N_0 = N_0(\eta)$ such that for all $N \leq N_0$, the following holds:
		\[
		\int_I \int_{\mathbb{R}^4} \frac{|u_{>N}(t,x)|^{\alpha +2}}{|x|} \, dx \, dt \lesssim_u \eta \big(N^{1-2s_c} + K_I\big), \tag{5.1}
		\]
		where $K_I := \int_I N(t)^{3-2s_c} \, dt$. Moreover, $N_0$ and the implicit constants in the inequality are independent of the interval $I$.
	\end{theorem}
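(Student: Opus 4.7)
The plan is to adapt the frequency-localized Lin--Strauss Morawetz framework of Dodson, Miao--Murphy--Zheng, and Lu--Zheng to the inhomogeneous setting. Set $v:=u_{>N}$ and introduce the Morawetz action
\begin{equation*}
M(t) := 2\,\mathrm{Im}\int_{\mathbb{R}^4}\nabla a(x)\cdot \overline{v(t,x)}\,\nabla v(t,x)\, dx,
\end{equation*}
where $a(x)$ is a smooth, radial, convex weight (a mollified variant of $|x|^{1+b}$, suitably truncated in space) chosen so that the resulting potential term carries the weight $|x|^{-1}$ and so that both the Hessian $(a_{jk})$ and $-\Delta^{2}a$ are nonnegative up to acceptable error terms. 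Cauchy--Schwarz, Bernstein's inequality, and the a priori bound $u\in L_t^{\infty}\dot H_x^{s_c}$ immediately yield the boundary estimate
\begin{equation*}
\sup_{t\in I}|M(t)|\lesssim \|v\|_{L_x^{2}}\|\nabla v\|_{L_x^{2}}\lesssim_u N^{1-2s_c}.
\end{equation*}

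Using the equation $(i\partial_t+\Delta)v = F_N := P_{>N}(|x|^{-b}|u|^\alpha u)$, a direct computation yields a Morawetz identity whose kinetic part (from $a_{jk}$ and $-\Delta^{2}a$) is nonnegative, plus a flux term $\mathcal N(t)$ determined by $F_N$. To exhibit the good potential term, decompose
\begin{equation*}
F_N = |x|^{-b}|v|^\alpha v + \mathcal E,\qquad \mathcal E := P_{>N}\bigl(|x|^{-b}[F(u)-F(v)]\bigr) - P_{\leq N}\bigl(|x|^{-b}|v|^\alpha v\bigr),
\end{equation*}
and integrate by parts against the main piece $|x|^{-b}|v|^\alpha v$. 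The coefficient algebra is identical to that in the proof of Lemma~\ref{L1091} and produces a nonnegative good term bounded below by $c\int |v|^{\alpha+2}/|x|\, dx$ for some $c=c(\alpha,b)>0$. Integrating over $I$ and invoking the fundamental theorem of calculus then reduces matters to controlling the cumulative flux error generated by $\mathcal E$.

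Here the long-time Strichartz estimate (Theorem~\ref{Tlong time SZ}) plays the decisive role. Every summand of $\mathcal E$ carries at least one factor of $u$ localized to frequencies $\lesssim N$: the second explicitly through $P_{\leq N}$, and the first through the telescoping identity $F(u)-F(v) = \int_0^1 F'(v+\tau u_{\leq N})\, u_{\leq N}\, d\tau$. For that low-frequency factor the smallness estimate~\eqref{3.4} supplies the decisive constant $\eta$ as soon as $N\le N_0(\eta)$, while the remaining factors together with the singular weight $|x|^{-b}$ are handled via H\"older's inequality in Lorentz spaces, the product and chain rules (Lemmas~\ref{L:leibnitz} and~\ref{L:6141}), Lemma~\ref{L-qd}, Sobolev embedding, and the large-scale bound~\eqref{3.3}. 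Summing the resulting dyadic contributions in the spirit of the bootstrap used in Lemma~\ref{Iter} gives the overall bound $\eta(N^{1-2s_c}+K_I)$.

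The principal difficulty is precisely this last step. Because $P_{>N}$ does not commute with multiplication by $|x|^{-b}$, the commutator pieces inside $\mathcal E$ cannot be split off cleanly; they must be estimated at the dyadic level without losing the factor $\eta$ and without leaking a factor of $N^{2s_c-1}$ into the error. It is also here that the hypothesis $b<3-s_c$ becomes essential, as it ensures that pairing $|x|^{-b}\in L^{4/b,\infty}$ with the Strichartz exponents keeps every intermediate Lorentz index in the admissible range. The Arz\'ela--Ascoli compactness of the orbit of $u$ in $\dot H_x^{s_c}$ is then used to convert each dyadic summation into a genuinely $\eta$-small contribution, closing the estimate.
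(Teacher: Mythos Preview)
Your outline has the right architecture, but two steps do not go through as written. First, the boundary estimate: you claim $\sup_{t\in I}|M(t)|\lesssim_u N^{1-2s_c}$ from Cauchy--Schwarz and the a priori $\dot H^{s_c}$ bound, but the theorem demands $\eta N^{1-2s_c}$, and the a priori bound alone cannot supply the $\eta$. The paper obtains it by writing $|M(t)|\lesssim \||\nabla|^{1/2}u_{>N}\|_{L_x^2}^{2}$ and invoking Lemma~\ref{LControl of Low and High Frequencies}(ii), which uses almost periodicity to give $\||\nabla|^{1/2}u_{>N}\|_{L_t^\infty L_x^2}\lesssim_u \eta N^{1/2-s_c}$ once $N\le N_1(\eta)$. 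Second, your assertion that $P_{\le N}\bigl(|x|^{-b}|v|^\alpha v\bigr)$ ``carries a factor of $u$ localized to frequencies $\lesssim N$ explicitly through $P_{\le N}$'' is false: $v=u_{>N}$ is purely high frequency, so no copy of $u_{\le N}$ is present and \eqref{3.4} cannot be invoked on this piece.

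The paper sidesteps both the commutator issue you flag and this missing-smallness problem by decomposing at the momentum-bracket level rather than the forcing level: one writes $\{P_{>N}F(u),u_{>N}\}_p=\{F(u),u\}_p-\{F(u_{\le N}),u_{\le N}\}_p-\{F(u)-F(u_{\le N}),u_{\le N}\}_p-\{P_{\le N}F(u),u_{>N}\}_p$, so the main contribution comes cleanly from the first two brackets, and in the last bracket one introduces an intermediate scale $N/\eta^2$, splitting $|u|^\alpha u$ into $|u_{\le N/\eta^2}|^\alpha u_{\le N/\eta^2}$ plus a remainder carrying $u_{>N/\eta^2}$; the $\eta$ then comes from \eqref{E12244} and \eqref{E12243} respectively, with no dyadic bootstrap needed. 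Two smaller points: the paper uses the untruncated weight $a(x)=|x|$ (so $|\nabla a|\le1$), producing the good term $\int |u_{>N}|^{\alpha+2}/|x|^{1+b}$, which is what Lemma~\ref{Llowerbound} actually requires---the exponent in the theorem statement appears to be a typo---and all error terms are handled directly through the prepackaged bounds of Lemma~\ref{LControl of Low and High Frequencies} rather than any iteration in the style of Lemma~\ref{Iter}.
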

 
We need the following lemma to prove Theorem \ref{TMorawetz}.
	\begin{lemma}[Control of Low and High Frequencies]\label{LControl of Low and High Frequencies}
		Let $u : [0, T_{\text{max}}) \times \mathbb{R}^4 \to \mathbb{C}$ be an almost periodic solution to \eqref{INLS} with $N(t) \equiv N_k \geq 1$ on each characteristic subinterval $J_k \subset [0, T_{\text{max}})$. Then, on any compact interval $I \subset [0, T_{\text{max}})$, composed of consecutive subintervals $J_k$, the following estimates hold:\\
		(i) Let  $(q,r)$ be an admissible pair. For any  $N>0$ and  $0\le s<1/2$, we have 
		\begin{equation}
			\||\nabla |^{s}u_{\geq N}\|_{L_t^q L_x^{r,2}(I \times \mathbb{R}^4)} \lesssim_u N^{s-s_c} \left( 1 + N^{2s_c - 1} K_I \right)^{\frac{1}{q}}.  \label{E12241}
		\end{equation}
		\noindent (ii)
		For any $\eta > 0$, there exists $N_1 = N_1(\eta)$ such that for all $N \leq N_1$, we have
		\[
		\||\nabla|^{\frac{1}{2}} u_{\geq N}\|_{L_t^\infty L_x^2(I \times \mathbb{R}^4)} \lesssim_u \eta N^{\frac{1}{2} - s_c}.  
		\]
		\noindent (iii) Using \eqref{3.4}, it follows that for any $\eta > 0$, there exists $N_2 = N_2(\eta)$ such that for all $N \leq N_2$,
		\begin{equation}
			\||\nabla|^{s_c} u_{\leq N}\|_{L_t^2 L_x^{4,2}(I \times \mathbb{R}^4)} \lesssim_u \eta \left( 1 + N^{2s_c - 1} K_I \right)^{\frac{1}{2}}.  \label{E12242}
		\end{equation}
	\end{lemma}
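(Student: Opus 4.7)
My plan is to handle the three parts in the order (iii), (i), (ii), since (iii) is essentially a direct consequence of the long-time Strichartz bound (3.4), while the argument for (i) supplies the building block from which one then extracts a factor of $\eta$ in (ii).

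For (iii), I would simply apply (3.4) of Theorem \ref{Tlong time SZ} with $s = s_c$ and absorb the right-hand side via the elementary inequality $1 + x^{1/2} \leq \sqrt{2}(1 + x)^{1/2}$ with $x = N^{2s_c - 1} K_I$; this gives the result with $N_2 := N_0(\eta)$.

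For (i), the plan is a Littlewood-Paley decomposition $u_{\geq N} = \sum_{M \geq N} u_M$ combined with Bernstein's inequality in Lorentz spaces (Lemma \ref{L:Bernstein}) to trade derivatives for powers of $M$: I would reduce matters to estimating $\||\nabla|^{s_c} u_{\leq 2M}\|_{L_t^q L_x^{r,2}}$ for each dyadic $M \geq N$. For admissible $(q,r)$, real interpolation between the endpoints $L_t^\infty L_x^2$ (controlled by the $L_t^\infty \dot H_x^{s_c}$ hypothesis) and $L_t^2 L_x^{4,2}$ (controlled by (3.3) of Theorem \ref{Tlong time SZ}) then yields $\||\nabla|^{s_c} u_{\leq 2M}\|_{L_t^q L_x^{r,2}} \lesssim_u (1 + M^{2s_c - 1} K_I)^{1/q}$. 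Summing the resulting $M^{s - s_c}(1 + M^{2s_c - 1} K_I)^{1/q}$ over dyadic $M \geq N$ reduces matters to two geometric tails in $M$; the hypothesis $s < 1/2$ is precisely what ensures convergence in the borderline case $q = 2$, where the second tail carries the exponent $s - 1/2 < 0$, and the sums collect to $N^{s - s_c}(1 + N^{2s_c - 1} K_I)^{1/q}$.

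Part (ii) is the subtle one, and I expect it to be the main obstacle. I would split $u_{\geq N}$ at an auxiliary threshold $N^* := \eta^{-1/(s_c - 1/2)} N$ and use Plancherel to write
\[
\||\nabla|^{1/2} u_{\geq N}\|_{L_t^\infty L_x^2}^2 \sim \sum_{N \leq M \leq N^*} M^{1 - 2s_c} \||\nabla|^{s_c} u_M\|_{L_t^\infty L_x^2}^2 + \sum_{M > N^*} M^{1 - 2s_c} \||\nabla|^{s_c} u_M\|_{L_t^\infty L_x^2}^2.
\]
Since $1 - 2s_c < 0$, each sum is dominated by its leftmost dyadic term. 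The high piece is $\lesssim_u (N^*)^{1 - 2s_c} = \eta^2 N^{1 - 2s_c}$ by the $L_t^\infty \dot H_x^{s_c}$ bound and the choice of $N^*$. The intermediate piece requires smallness of $\||\nabla|^{s_c} u_M\|_{L_t^\infty L_x^2}$ on the dyadic window $[N, N^*]$, which is \emph{not} given by (Ecompact) (since that only controls frequencies above $C(\eta)N(t)$). To obtain it, I would invoke the precompactness of $\{N(t)^{s_c - 2} u(t, N(t)^{-1} \cdot)\} \subset \dot H^{s_c}(\mathbb{R}^4)$: a precompact family in a homogeneous Sobolev space is equi-integrable in frequency near the origin, so for any $\eta' > 0$ there is $c_1(\eta')$ with $\||\nabla|^{s_c} P_{\leq c_1(\eta')} w\|_{L_x^2} \leq \eta'$ uniformly across the family. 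Unscaling and using $N(t) \geq 1$ transfers this to $\||\nabla|^{s_c} u_{\leq c_1(\eta)}\|_{L_t^\infty L_x^2} \leq \eta$; setting $N_1 := c_1(\eta) \eta^{1/(s_c - 1/2)}$ then forces $N^* \leq c_1(\eta)$ for $N \leq N_1$, and the intermediate sum is $\lesssim \eta^2 \sum_{M \geq N} M^{1 - 2s_c} \lesssim \eta^2 N^{1 - 2s_c}$, closing the bound.
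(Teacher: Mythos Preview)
Your proposal is correct and follows the standard route. The paper itself does not give a proof of this lemma: it simply states that ``the proof follows from the same method outlined in [Lemma 4.7, Murphy 2015] and [Lemma 5.2, Lu--Zheng 2017], and we omit the details here for brevity.'' Your sketch --- (iii) directly from the smallness in the long-time Strichartz estimate \eqref{3.4}; (i) via Littlewood--Paley decomposition, Bernstein, interpolation between the $L_t^\infty L_x^2$ and $L_t^2 L_x^{4,2}$ endpoints of \eqref{3.3}, and summing the resulting dyadic tails (where $s<1/2$ is exactly what makes the $q=2$ tail converge); (ii) by splitting at a threshold $N^*$ and exploiting frequency equi-integrability at the origin coming from precompactness together with $N(t)\ge 1$ --- is precisely the argument used in those references, adapted to the Lorentz-space setting of this paper.
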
   
	\begin{proof}
The proof follows from the same method  outlined in  \cite[Lemma 4.7]{Murphy2015} and \cite[Lemma 5.2]{LuZheng2017}, and we omit the details here for brevity.
	\end{proof}

	\noindent \textbf{Proof of Theorem \ref{TMorawetz}}
	Throughout the proof, the spacetime norms are taken over $I \times \mathbb{R}^4$.
	
	We set $0 < \eta \ll 1$ and choose  
	\[
	N < \min \left\{ N_1(\eta), \eta^2 N_2(\eta^{2s_c}) \right\},
	\]
	where $N_1$ and $N_2$ are constants determined in Lemma \ref{LControl of Low and High Frequencies}. Notably, from (\ref{E12241}), we have
	\begin{equation}
		\|u_{>N/\eta^2}\|_{L_t^{2}L_x^{4,2}} \lesssim_u \eta N^{-s_c}(1 + N^{2s_c-1}K_I)^{1/2}. \label{E12243}
	\end{equation}
	Additionally, since $N/\eta^2 < N_2(\eta^{2s_c})$, we can apply (\ref{E12242}) to obtain
	\begin{equation}
		\||\nabla|^{s_c}u_{\leq N/\eta^2}\|_{L_t^{2}L_x^{4,2}} \lesssim_u \eta (1 + N^{2s_c-1}K_I)^{1/2}. \label{E12244}
	\end{equation}
	
	We define the Morawetz functional as 
	\[
	M(t): = 2 \text{Im} \int_{\mathbb{R}^4} \frac{x}{|x|} \cdot \nabla u_{>N}(t,x) \overline{u}_{>N}(t,x) dx. \notag
	\]
	By computing using the equation $(i\partial_t + \Delta) u_{>N} = P_{>N} F(u)$, we have
	\[
	\partial_t M \gtrsim \int_{\mathbb{R}^4} \frac{x}{|x|} \cdot \{P_{>N} F(u), u_{>N}\}_p dx, \notag
	\]
	where the momentum bracket $\{\cdot, \cdot\}_p$ is defined as $\{f, g\}_p := \text{Re}(f \nabla \overline{g} - g \nabla \overline{f})$. Thus, by the fundamental theorem of calculus, we obtain
	\begin{equation}
		\int_I \int_{\mathbb{R}^4} \frac{x}{|x|} \cdot \{P_{>N} F(u), u_{>N}\}_p dx dt \lesssim \|M(t)\|_{L_t^{\infty}(I)}. \label{E12245}
	\end{equation}
	Observing that $\{F(u), u\}_p = -\frac{\alpha}{\alpha + 2} \nabla (|x|^{-b} |u|^{\alpha + 2}) - \frac{2}{\alpha + 2} \nabla (|x|^{-b}) |u|^{\alpha + 2}$, we can write
	\begin{align}
		\{P&_{>N}F(u),u_{>N}\}_p\notag\\
		&=\{F(u),u\}_p-\{F(u_{\le N},u_{\le N})\}_p  -\{F(u)-F(u_{\le N}),u_{\le N}\}_p-\{P_{\le N}F(u),u_{>N}\}_p\notag\\
		&=-\frac{\alpha }{\alpha +2}\nabla [|x|^{-b}(|u|^{\alpha +2}-|u_{\le N}|^{\alpha +2})]-\frac{2}{\alpha +2}\nabla (|x|^{-b})(|u|^{\alpha +2}-|u_{\le N}|^{\alpha +2})\notag\\
		&\qquad -\{F(u)-F(u_{\le N}),u_{\le N}\}_p-\{P_{\le N}F(u),u_{>N}\}_p\notag\\
		&:=I+II+III.\notag
	\end{align}
	
	For $I$, integrating by parts shows that it contributes to the LHS of (\ref{E12245}) a multiple of 
	\[
	\int_I \int_{\mathbb{R}^4} \frac{|u_{>N}(t,x)|^{\alpha + 2}}{|x|^{1+b}} dx dt \notag
	\]
	and to the RHS of (\ref{E12245})  a multiple of
	\begin{align}
		\|\frac{1}{|x|^{1+b}}&(|u|^{\alpha + 2} - |u_{>N}|^{\alpha + 2} - |u_{\leq N}|^{\alpha + 2})\|_{L_{t,x}^1} \notag\\
		&\lesssim \|\frac{1}{|x|^{1+b}}(u_{\leq N})^{\alpha + 1}u_{>N}\|_{L_{t,x}^1} + \|\frac{1}{|x|^{1+b}}(u_{>N})^{\alpha + 1}u_{\leq N}\|_{L_{t,x}^1} := I_1 + I_2. \label{EM1}
	\end{align}
	
	For $II$, integration by parts yields
\begin{align}
		II &\lesssim \|\frac{1}{|x|^{1+b}} u_{\leq N} (|u|^\alpha u - |u_{\leq N}|^\alpha u_{\leq N})\|_{L_{t,x}^1} + \|\frac{1}{|x|^b} \nabla u_{\leq N} (|u|^\alpha u - |u_{\leq N}|^\alpha u_{\leq N})\|_{L_{t,x}^1} \notag\\
		&:= II_1 + II_2. \label{EM2}
\end{align}
	
	Similarly, for $III$, we have
	\begin{equation}
		III \lesssim \|\frac{1}{|x|^{1+b}} u_{>N} P_{\leq N} (|u|^\alpha u)\|_{L_{t,x}^1} + \|\frac{1}{|x|^b} u_{>N} \nabla P_{\leq N} (|u|^\alpha u)\|_{L_{t,x}^1}  . \label{EM3}
	\end{equation}
	
	Thus, to complete the proof of Theorem \ref{TMorawetz}, it suffices to show that  
	\begin{equation}
		\|M(t)\|_{L_t^{\infty }(I)} \lesssim_u \eta N^{1-2s_c}, \label{EM4}
	\end{equation}
	and that the error terms (\ref{EM1}), (\ref{EM2}), and (\ref{EM3}) are all controlled by $\eta (N^{1-2s_c} + K_I)$.
	
	We begin by proving (\ref{EM4}). Using Bernstein's inequality and (\ref{Ebound}), we obtain 
	\begin{align}
		\|M(t)\|_{L_t^{\infty }(I)} 
		&\lesssim \||\nabla|^{-1/2} \nabla u_{>N}\|_{L_t^{\infty} L_x^{2}} \||\nabla|^{1/2} \left( \frac{x}{|x|} u_{>N} \right)\|_{L_t^{\infty} L_x^{2}} \notag\\
		&\lesssim \||\nabla|^{1/2} u_{>N}\|_{L_t^{\infty} L_x^{2}}^2 \lesssim \eta N^{1-2s_c}. \notag
	\end{align}
	
	Next, we estimate the error terms (\ref{EM1}), (\ref{EM2}), and (\ref{EM3}).
	
	First, by interpolation, (\ref{Ebound}), and Lemma \ref{LControl of Low and High Frequencies}, we have  
	\begin{equation}
		\||\nabla|^{s_c}u_{\leq N}\|_{L_t^{2(\alpha+1)}L_x^{\frac{4(\alpha+1)}{2\alpha+1},2}}^{\alpha+1} 
		\lesssim \||\nabla|^{s_c}u\|_{L_t^{\infty}L_x^{2}}^{\alpha} \||\nabla|^{s_c}u_{\leq N}\|_{L_t^{2}L_x^{4,2}} 
		\lesssim \eta (1+N^{2s_c-1}K_I)^{\frac{1}{2}}. \notag
	\end{equation}
	Combining the above inequality with Hölder's inequality, Hardy's inequality, Bernstein's inequality, Sobolev embedding, and Lemma \ref{LControl of Low and High Frequencies}, we obtain  
	\begin{align}
		I_1 
		&\lesssim \||x|^{-\frac{1+b}{\alpha+1}}u_{\leq N}\|_{L_t^{2(\alpha+1)}L_x^{\frac{4(\alpha+1)}{3},2}}^{\alpha+1} \|u_{>N}\|_{L_t^{2}L_x^{4,2}} \notag\\
		&\lesssim \||\nabla|^{\frac{1+b}{\alpha+1}}u_{\leq N}\|_{L_t^{2(\alpha+1)}L_x^{\frac{4(\alpha+1)}{3},2}}^{\alpha+1} \|u_{>N}\|_{L_t^{2}L_x^{4,2}} \notag\\
		&\lesssim N^{1-s_c} \||\nabla|^{s_c}u_{\leq N}\|_{L_t^{2(\alpha+1)}L_x^{\frac{4(\alpha+1)}{2\alpha+1},2}}^{\alpha+1} \|u_{>N}\|_{L_t^{2}L_x^{4,2}} \notag\\
		&\lesssim \eta N^{1-2s_c}(1+N^{2s_c-1}K_I). \notag
	\end{align}
	For the estimate of $I_2$, we consider two cases. 
	If $|u_{\leq N}| \ll |u_{>N}|$, then this term can be absorbed into the LHS of (\ref{E12245}), provided we can show that  
	\begin{equation}
		\|\frac{1}{|x|^{1+b}}|u_{>N}|^{\alpha +2}\|_{L_{t,x}^{1}} < \infty. \label{EM5}
	\end{equation}
	For the other cases of $I_2$, we reduce the problem to estimating $I_1$. Therefore, to complete the estimate of $I_2$, it suffices to prove (\ref{EM5}). For this, using Hardy's inequality, Sobolev embedding, Bernstein's inequality, and Lemma \ref{LSpacetime bounds}, we have  
	\begin{align}
		\|\frac{1}{|x|^{1+b}}|u_{>N}|^{\alpha+2}\|_{L_{t,x}^1} 
		&\lesssim \||x|^{-\frac{1+b}{\alpha+2}}u_{>N}\|_{L_{t,x}^{\alpha+2}}^{\alpha+2} \lesssim \||\nabla|^{\frac{1+b}{\alpha+2}}u_{>N}\|_{L_t^{\alpha+2}L_x^{\alpha+2}}^{\alpha+2} \notag\\
		&\lesssim \||\nabla|^{\frac{2\alpha-1-b}{\alpha+2}}u_{>N}\|_{L_t^{\alpha+2}L_x^{\frac{2(\alpha+2)}{\alpha+1},2}} 
		\notag\\
		&\lesssim N^{1-2s_c} \||\nabla|^{s_c}u\|_{L_t^{\alpha+2}L_x^{\frac{2(\alpha+2)}{\alpha+1},2}} < \infty,\notag
	\end{align}
	where we also used the embedding  $ L^{\frac{2(\alpha +2)}{\alpha +1},2}(\mathbb{R} ^4)\hookrightarrow L^{\frac{2(\alpha +2)}{\alpha +1}}(\mathbb{R} ^4)$.
	
	Next, we consider the estimate of (\ref{EM2}). The term $II_1$ can be directly controlled by $I_1 + I_2$. For $II_2$, using Hölder's inequality, Hardy's inequality, Sobolev embedding, (\ref{Ebound}), and Lemma \ref{LControl of Low and High Frequencies}, we have  
	\begin{align}
		II_2 
		&\lesssim \|\frac{1}{|x|^{b}} \nabla u_{\leq N} (|u_{>N}|^{\alpha} + |u_{<N}|^{\alpha}) u_{>N}\|_{L_{t,x}^1} \notag\\
		&\lesssim \left( \||x|^{-\frac{b}{\alpha}} u_{>N}\|_{L_t^{\infty}L_x^{2\alpha}}^\alpha + \||x|^{-\frac{b}{\alpha}} u_{>N}\|_{L_t^{\infty}L_x^{2\alpha}}^\alpha \right) \|u_{>N}\|_{L_t^{2}L_x^{4,2}} \|\nabla u_{\leq N}\|_{L_t^{2}L_x^{4,2}} \notag\\
		&\lesssim \||\nabla|^{\frac{b}{\alpha}} u\|^{\alpha }_{L_t^{\infty}L_x^{2\alpha}} \|u_{>N}\|_{L_t^{2}L_x^{4,2}} \|\nabla u_{\leq N}\|_{L_t^{2}L_x^{4,2}} \lesssim \||\nabla|^{s_c} u\|_{L_t^{\infty}L_x^{2}}^\alpha \|u_{>N}\|_{L_t^{2}L_x^{4,2}} \|\nabla u_{\leq N}\|_{L_t^{2}L_x^{4,2}} \notag\\
		&\lesssim \eta N^{1-2s_c}(1+N^{2s_c-1}K_I). \notag
	\end{align}
	
	Finally, we consider the estimate of (\ref{EM3}). First, using H\"older and Hardy's inequality, we have  
	\begin{align}
		III 
		&\lesssim \|u_{>N}\|_{L_t^{2}L_x^{4,2}} \|\frac{1}{|x|^{1+b}} P_{\leq N}(|u|^{\alpha} u)\|_{L_t^{2}L_x^{\frac{4}{3},2}} 
		+ \|u_{>N}\|_{L_t^{2}L_x^{4,2}} \|\frac{1}{|x|^{b}} \nabla P_{\leq N}(|u|^{\alpha} u)\|_{L_t^{2}L_x^{\frac{4}{3},2}} \notag\\
		&\lesssim\|u_{>N}\|_{L_t^{2}L_x^{4,2}}  \||x|^{-b}\|_{L_x^{\frac{4}{b},\infty }} \|\nabla P_{\leq N}(|u|^{\alpha} u)\|_{L_t^{2}L_x^{\frac{4}{3-b},2}}. \notag
	\end{align}
	Thus, in light of (\ref{E12241}), it suffices to prove  
	\begin{equation}
		\|\nabla P_{\leq N}(|u|^{\alpha} u)\|_{L_t^{2}L_x^{\frac{4}{3-b},2}} \lesssim_u \eta N^{1-s_c}(1+N^{2s_c-1}K_I)^{1/2}. \notag
	\end{equation}
	
	To prove the above estimate, we use Hölder's inequality, Bernstein's inequality, the fractional chain rule, (\ref{Ebound}), (\ref{E12243}), and (\ref{E12244}) to estimate
	\begin{align}
		&\|\nabla P_{\leq N}(|u|^{\alpha} u)\|_{L_t^{2}L_x^{\frac{4}{3-b},2}} \notag\\
		&\lesssim \|\nabla P_{\leq N}(|u|^{\alpha} u - |u_{\leq N/\eta^2}|^\alpha u_{\leq N/\eta^2})\|_{L_t^{2}L_x^{\frac{4}{3-b},2}} 
		+ N^{1-s_c} \||\nabla|^{s_c}(|u_{\leq N/\eta^2}|^\alpha u_{\leq N/\eta^2})\|_{L_t^{2}L_x^{\frac{4}{3-b},2}} \notag\\
		&\lesssim N \|u\|_{L_t^{\infty}L_x^{\frac{4\alpha}{2-b},2}}^\alpha \|u_{>N/\eta^2}\|_{L_t^{2}L_x^{4,2}} 
		+ N^{1-s_c} \|u\|_{L_t^{\infty}L_x^{\frac{4\alpha}{2-b},2}}^\alpha \||\nabla|^{s_c} u_{\leq N/\eta^2}\|_{L_t^{2}L_x^{4,2}} \notag\\
		&\lesssim_u \eta N^{1-s_c}(1+N^{2s_c-1}K_I)^{1/2}. \notag
	\end{align}
	
	This completes the proof of Theorem \ref{TMorawetz}.
	\subsection{Ruling Out the Soliton}\label{S62}
	In this subsection, we rule out  the soliton solutions as in  Theorem \ref{TReduction2}. 
	\begin{theorem}[No quasi-solitons]\label{Tnosoliton}
		Let  $\frac{3}{2}\le s_c<2$.	There are no almost periodic solutions as in Theorem \ref{TReduction2} such that 
		\begin{equation}
			K_{[0,T_{\text{max}})}=\int _0^{T_{\text{max}}}N(t)^{3-2s_c}dt=\infty .\label{1224z1}
		\end{equation}
	\end{theorem}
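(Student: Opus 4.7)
The plan is to argue by contradiction. Suppose the claim fails, producing an almost-periodic solution $u$ as in Theorem~\ref{TReduction2} with $K := K_{[0,T_{\max})} = \infty$. I will produce a lower bound on the frequency-localized Morawetz density that matches, up to constants, the right-hand side of Theorem~\ref{TMorawetz}, and then let $K_I \to \infty$ along compact unions of characteristic subintervals.

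A direct calculation of the momentum bracket for $F(u) = |x|^{-b}|u|^\alpha u$, together with an integration by parts using $\mathrm{div}(x/|x|)=3/|x|$ in $\mathbb{R}^4$, shows that the positive term controlled by the Morawetz identity is in fact
\begin{equation*}
\frac{3\alpha+2b}{\alpha+2}\int_I\int_{\mathbb{R}^4}\frac{|u_{>N}(t,x)|^{\alpha+2}}{|x|^{1+b}}\,dx\,dt,
\end{equation*}
so that Theorem~\ref{TMorawetz} reads, for every $\eta>0$ and some $N_0(\eta)$,
\begin{equation*}
\int_I\int_{\mathbb{R}^4}\frac{|u_{>N}(t,x)|^{\alpha+2}}{|x|^{1+b}}\,dx\,dt \;\lesssim_u\; \eta\bigl(N^{1-2s_c}+K_I\bigr)\qquad\text{for all } N\le N_0(\eta).
\end{equation*}
(The theorem statement records the weight $|x|^{-1}$, but its proof establishes the sharper $|x|^{-1-b}$ form; the latter is the one required to obtain a scaling-invariant matching lower bound.)

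For the lower bound I would use almost periodicity. Because $N(t)\ge 1$, a sufficiently small fixed absolute $N$ lies below any prescribed small multiple of $N(t)$ uniformly in $t$; by the compactness property \eqref{Ecompact} and Bernstein, $u_{\le N}$ is then uniformly small in $L^2$, so the mass-concentration bound \eqref{Emass} transfers to the high-frequency piece:
\begin{equation*}
\int_{|x|\le C(u)/N(t)}|u_{>N}(t,x)|^2\,dx \;\gtrsim_u\; N(t)^{-2s_c}.
\end{equation*}
Applying Hölder on this ball of volume $\sim N(t)^{-4}$ and using the identity $\alpha(2-s_c)=2-b$,
\begin{equation*}
\int_{|x|\le C(u)/N(t)}|u_{>N}(t,x)|^{\alpha+2}\,dx \;\gtrsim_u\; N(t)^{2\alpha-s_c(\alpha+2)} = N(t)^{2-b-2s_c}.
\end{equation*}
On the same ball $|x|^{-(1+b)} \gtrsim N(t)^{1+b}$, so integrating over $I$ gives the desired matching lower bound
\begin{equation*}
\int_I\int_{\mathbb{R}^4}\frac{|u_{>N}(t,x)|^{\alpha+2}}{|x|^{1+b}}\,dx\,dt \;\gtrsim_u\; \int_I N(t)^{3-2s_c}\,dt \;=\; K_I.
\end{equation*}

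Combining the two displays yields $K_I\lesssim_u\eta(N^{1-2s_c}+K_I)$. Since $K=\infty$, choose $I$ (a union of consecutive $J_k$'s) so large that $K_I\ge N^{1-2s_c}$; then $K_I\lesssim_u\eta K_I$, and taking $\eta$ sufficiently small contradicts $K_I>0$. The main technical obstacle is the correct ordering of the smallness parameters: one must first fix $N$ small enough to make the mass-transfer $u \rightsquigarrow u_{>N}$ work on the concentration ball (which is permitted because $N(t)\ge 1$ allows a single $N$ to work uniformly in $t$), and \emph{only afterward} shrink $\eta$ in Theorem~\ref{TMorawetz}. Once this order is respected, the conclusion is immediate.
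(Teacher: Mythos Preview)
Your overall strategy is the same as the paper's: the lower bound via mass concentration transferred to $u_{>N}$, H\"older on the ball $|x|\le C(u)/N(t)$, and the weight $|x|^{-(1+b)}$ producing $K_I$ is exactly the content of Lemma~\ref{Llowerbound}, and combining it with Theorem~\ref{TMorawetz} to reach $K_I\lesssim_u\eta(N^{1-2s_c}+K_I)$ is precisely how the paper concludes. You also correctly observe that the proof of Theorem~\ref{TMorawetz} actually yields the weight $|x|^{-1-b}$ (not $|x|^{-1}$ as written in its statement), which is what is needed for the scaling to match.

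There is, however, one genuine error: the parameter ordering you describe is backwards. In Theorem~\ref{TMorawetz} the threshold $N_0=N_0(\eta)$ depends on $\eta$, and inspection of its proof shows $N_0(\eta)\le\eta^2 N_2(\eta^{2s_c})\to 0$ as $\eta\to 0$. Thus if you fix $N$ first and \emph{then} shrink $\eta$, the Morawetz inequality ceases to apply once $N_0(\eta)<N$, and your final step ``taking $\eta$ sufficiently small contradicts $K_I>0$'' is illegitimate. The mass-transfer threshold for the lower bound, on the other hand, depends only on $u$ (via compactness) and not on $\eta$, so there is no obstruction to choosing $N$ \emph{after} $\eta$. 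The correct order---which the paper follows implicitly---is: first fix $\eta$ small enough that $C_u\eta<\tfrac12$, so the $\eta K_I$ term on the right can be absorbed into the left; then choose $N\le\min\{N_0(\eta),\,N_0^{\mathrm{low}}\}$; and only then let $I$ exhaust $[0,T_{\max})$, so that $K_I\to\infty$ contradicts the resulting uniform bound $K_I\lesssim_u N^{1-2s_c}$.
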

	The proof of Theorem \ref{TReduction2} relies on the frequency-localized Morawetz inequality established in Subsection \ref{S61} and the following lower bound for the Morawetz estimate:
	
	\begin{lemma}[Lower bound]\label{Llowerbound}  
		Let $\frac{3}{2} \leq s_c < 2$, and let $u:[0, T_{\text{max}}) \times \mathbb{R}^4 \rightarrow \mathbb{C}$ be an almost periodic solution as in Theorem \ref{TReduction2}. Then, there exists $N_0 > 0$ such that for any $N < N_0$, we have  
		\begin{equation}
			K_I \lesssim_u \int_I \int_{\mathbb{R}^4} \frac{|u_{>N}(t,x)|^{\alpha+2}}{|x|^{1+b}} dx dt, \label{E1224z2}
		\end{equation}  
		where $K_I := \int_I N(t)^{3-2s_c} dt$.
	\end{lemma}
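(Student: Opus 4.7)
The plan is to reduce the claim on $I$ to a uniform per-interval lower bound via the scaling symmetry of \eqref{INLS}, then establish that lower bound through a compactness-contradiction argument driven by almost periodicity. First I would rescale on each characteristic subinterval. For $J_k = [t_k, t_{k+1}]$ with $N(t) \equiv N_k$ and $|J_k| \sim_u N_k^{-2}$, set
\[
v_k(s, y) := N_k^{s_c - 2}\, u\bigl(t_k + N_k^{-2} s,\; N_k^{-1} y\bigr), \qquad s \in [0, T_k], \quad T_k := N_k^2 |J_k| \sim_u 1.
\]
The scale invariance of \eqref{INLS} and a change of variables (using $\alpha(2-s_c) = 2-b$) yield
\[
\int_{J_k} \int_{\mathbb{R}^4} \frac{|u_{>N}|^{\alpha+2}}{|x|^{1+b}} \, dx \, dt \;=\; N_k^{1-2s_c} \int_0^{T_k} \int_{\mathbb{R}^4} \frac{|(v_k)_{>N/N_k}|^{\alpha+2}}{|y|^{1+b}} \, dy \, ds.
\]
Since $K_I \sim_u \sum_{J_k \subset I} N_k^{1-2s_c}$ and $N_k \geq 1$ forces $N/N_k \leq N$, it suffices to produce $N_0, c_u > 0$ such that $\int_0^{T_k} \int_{\mathbb{R}^4} |y|^{-(1+b)} |(v_k)_{>N}|^{\alpha+2} \, dy \, ds \geq c_u$ uniformly in $k$ and in $N \leq N_0$; summing then gives \eqref{E1224z2}.

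Second, I would assemble the ingredients. By the almost periodicity \eqref{E592}, the family $\{v_k(s) : k,\, s \in [0, T_k]\}$ is a subset of the precompact orbit in $\dot H^{s_c}(\mathbb{R}^4)$, so by Dini applied to the monotone-in-$c$ quantity $\|P_{\leq c} v\|_{\dot H^{s_c}}^2$ one has the equicontinuity
\[
\sup_{k,\,s \in [0,T_k]} \|(v_k(s))_{\leq c}\|_{\dot H^{s_c}} \to 0 \quad\text{as } c \to 0.
\]
Moreover, every $v_k$ satisfies $\|v_k(0)\|_{\dot H^{s_c}} \geq \epsilon_0 > 0$ with $\epsilon_0$ the small-data scattering threshold from Theorem \ref{TLocalwellposedness}; otherwise, rescaling back, $u$ would have finite scattering norm on $[t_k, T_{\max})$, contradicting the blow-up hypothesis. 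Local well-posedness, applied uniformly on the compact set of initial data, furnishes $\tau_0 = \tau_0(u) \in (0, \inf_k T_k]$ and $L = L(u) > 0$ such that each $v_k$ obeys $\|v_k\|_{L_s^\gamma L_y^{m,2} \cap L_s^\infty \dot H^{s_c}([0, \tau_0] \times \mathbb{R}^4)} \leq L$. Finally, for $s_c \geq 3/2$, a Hölder estimate with $|y|^{-(1+b)} \in L^{4/(1+b),\infty}$ followed by the Sobolev embedding $\dot H^{s_c}(\mathbb{R}^4) \hookrightarrow L^{4(\alpha+2)/(3-b)}(\mathbb{R}^4)$ (whose validity is equivalent to $\alpha \geq 4-2b$, i.e.\ $s_c \geq 3/2$) gives
\[
\int_{\mathbb{R}^4} \frac{|v|^{\alpha+2}}{|y|^{1+b}}\, dy \;\lesssim\; \|v\|_{\dot H^{s_c}}^{\alpha+2},
\]
so the map $v \mapsto \int_0^{\tau_0}\!\int |y|^{-(1+b)} |v|^{\alpha+2}\, dy\, ds$ is continuous on $L_s^\infty \dot H^{s_c}([0, \tau_0] \times \mathbb{R}^4)$.

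The contradiction now runs as follows. Suppose the desired lower bound fails; then there exist $N_n \downarrow 0$ and indices $k_n$ with $\mathcal{I}_n := \int_0^{T_{k_n}}\!\int |y|^{-(1+b)} |(v_{k_n})_{>N_n}|^{\alpha+2}\, dy\, ds \to 0$. Passing to a subsequence, $v_{k_n}(0) \to v_\infty^0$ in $\dot H^{s_c}$ with $\|v_\infty^0\|_{\dot H^{s_c}} \geq \epsilon_0$. Let $v_\infty$ be the solution to \eqref{INLS} on $[0, \tau_0]$ with datum $v_\infty^0$. The stability theorem \ref{TStability} promotes $v_{k_n}(0) \to v_\infty^0$ to $v_{k_n} \to v_\infty$ in $L_s^\infty \dot H^{s_c}([0, \tau_0] \times \mathbb{R}^4)$; combined with the equicontinuity, $(v_{k_n})_{>N_n} \to v_\infty$ in the same space. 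The continuity of the weighted integral then forces
\[
\int_0^{\tau_0}\!\int_{\mathbb{R}^4} \frac{|v_\infty|^{\alpha+2}}{|y|^{1+b}}\, dy\, ds \;\leq\; \lim_n \mathcal{I}_n \;=\; 0,
\]
so $v_\infty \equiv 0$ on $[0, \tau_0]$, contradicting $\|v_\infty(0)\|_{\dot H^{s_c}} \geq \epsilon_0$.

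The main obstacle is converting $\dot H^{s_c}$-precompactness of the rescaled initial data into strong spacetime convergence of the \emph{solutions} on a common time window, with all constants uniform in $k$; this rests on using the stability theorem with data depending only on the precompact closure of $\{v_k(0)\}$. The threshold $s_c \geq 3/2$ enters exactly at the Sobolev embedding used for the continuity of the weighted $L^{\alpha+2}$ integral, which is where the critical exponent $4(\alpha+2)/(3-b)$ first falls below the Sobolev endpoint $4/(2-s_c)$.
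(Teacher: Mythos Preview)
Your approach---rescale each characteristic interval to unit scale and run a compactness--contradiction argument---is genuinely different from the paper's. The paper instead records (citing \cite{MiaoMurphyZheng2014}) the localized mass lower bound
\[
\inf_{t\in I}\,N(t)^{2s_c}\int_{|x|\le C(u)/N(t)}|u_{>N}(t,x)|^2\,dx\;\gtrsim_u\;1
\]
for all sufficiently small $N$, and then derives \eqref{E1224z2} in two lines by restricting the Morawetz integral to the ball $|x|\le C(u)/N(t)$ and applying H\"older there. Your route is more self-contained (it does not import the mass bound) but considerably longer; the paper's computation is immediate once that bound is in hand and, being purely local, never needs the functional to be controlled on all of $\dot H^{s_c}$.

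There is, however, a real error in your continuity step. The embedding $\dot H^{s_c}(\mathbb{R}^4)\hookrightarrow L^{4(\alpha+2)/(3-b)}(\mathbb{R}^4)$ holds only at $\alpha=4-2b$, i.e.\ exactly $s_c=\tfrac32$. For $s_c>\tfrac32$ one computes $4(\alpha+2)/(3-b)<4/(2-s_c)$, and a homogeneous Sobolev space never controls a \emph{subcritical} Lebesgue norm. Equivalently, under the $\dot H^{s_c}$-invariant rescaling $v\mapsto\lambda^{s_c-2}v(\cdot/\lambda)$ the functional $\int_{\mathbb{R}^4}|y|^{-(1+b)}|v|^{\alpha+2}\,dy$ scales like $\lambda^{2s_c-3}$, so your claimed bound $\int|y|^{-(1+b)}|v|^{\alpha+2}\,dy\lesssim\|v\|_{\dot H^{s_c}}^{\alpha+2}$ fails for $s_c>\tfrac32$, and the functional is not continuous (not even bounded on bounded sets) on $\dot H^{s_c}$.

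The repair is easy, because you only need the one-sided inequality $\int_0^{\tau_0}\!\int|y|^{-(1+b)}|v_\infty|^{\alpha+2}\le\liminf_n\mathcal{I}_n$. Strong convergence $(v_{k_n})_{>N_n}\to v_\infty$ in $L^\infty_s\dot H^{s_c}$ yields convergence in $L^{4/(2-s_c)}_{s,y}([0,\tau_0]\times\mathbb{R}^4)$, hence a.e.\ along a subsequence, and Fatou's lemma then gives exactly this lower semicontinuity. Alternatively, restrict to $\{|y|<R\}$: there H\"older on the finite-measure set \emph{does} give $\int_{|y|<R}|y|^{-(1+b)}|v|^{\alpha+2}\,dy\lesssim_R\|v\|_{\dot H^{s_c}}^{\alpha+2}$ (now the subcritical exponent is harmless), hence genuine continuity of the localized functional; conclude the localized integral of $v_\infty$ vanishes for every $R$ and let $R\to\infty$. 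With either fix your argument goes through.
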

	
	\begin{proof}  
		First, by the same argument as in \cite[(7.3)]{MiaoMurphyZheng2014}, there exist a sufficiently large constant $C(u)$ and $N_0 > 0$ such that for all $N < N_0$, we have  
		\begin{equation}
			\inf_{t \in I} N(t)^{2s_c} \int_{|x| \leq \frac{C(u)}{N(t)}} |u_{>N}(t,x)|^2 dx \gtrsim_u 1. \notag
		\end{equation}  
		
		This inequality, combined with Hölder's inequality, directly yields (\ref{E1224z2}):  
		\begin{align}
			\int_I \int_{\mathbb{R}^4} \frac{|u_{>N}(t,x)|^{\alpha+2}}{|x|^{1+b}} dx dt  
			&\gtrsim_u \int_I N(t)^{1+b} \int_{|x| \leq \frac{C(u)}{N(t)}} |u_{>N}|^{\alpha+2} dx dt \notag\\
			&\gtrsim_u \int_I N(t)^{1+b+2\alpha} \left( \int_{|x| \leq \frac{C(u)}{N(t)}} |u_{>N}|^2 dx \right)^{\frac{\alpha+2}{2}} dt \notag\\
			&\gtrsim_u \int_I N(t)^{1+b+2\alpha} \left(N(t)^{-2s_c}\right)^{\frac{\alpha+2}{2}} dt \notag\\
			&= \int_I N(t)^{3-2s_c} dt=K_I. \notag
		\end{align}  
		This completes the proof of Lemma \ref{Llowerbound}. 
	\end{proof}
	Now, we return to the proof of Theorem \ref{Tnosoliton}. Assume that $u$ is an almost periodic solution satisfying (\ref{1224z1}). Combining Theorem \ref{TMorawetz} and Lemma \ref{Llowerbound}, we obtain  
	\begin{equation}
		K_I \lesssim_u \eta (N^{1-2s_c} + K_I). \notag
	\end{equation}  
	By choosing $\eta > 0$ sufficiently small, we deduce that $K_I \lesssim_u N^{1-2s_c}$ holds uniformly over the interval $I$. Taking $I \subset [0, T_{\text{max}})$ to be sufficiently large leads to a contradiction. This completes the proof of Theorem \ref{Tnosoliton}. 
	Therefore, we conclude Theorem \ref{T1}.

\end{document}